\theoremstyle{plain}
\newtheorem{thm}{Theorem}[section]
\newtheorem{prop}[thm]{Proposition}
\newtheorem{lemma}[thm]{Lemma}
\theoremstyle{definition}
\newtheorem{dfn}[thm]{Definition}
\theoremstyle{remark}
\newtheorem{rem}[thm]{Remark}
\newcommand{\HH}{\mathrm{H}}
\begin{document}

\title{Universal deformation rings of modules over Frobenius algebras}

\author{Frauke M. Bleher}
\address{F.B.: Department of Mathematics\\University of Iowa\\
14 MacLean Hall\\Iowa City, IA 52242-1419, U.S.A.}
\email{frauke-bleher@uiowa.edu}
\thanks{The first author was supported in part by NSF Grant DMS06-51332.
The second author was supported by the Reassigned Time for Research Scholarship of the Office of Academic Affairs at Valdosta State University.}
\author{Jos\'{e} A. V\'{e}lez-Marulanda}
\address{J.V.: Department of Mathematics \& Computer Science\\Valdosta State University\\
2072 Nevins Hall\\Valdosta, GA 31698-0040, U.S.A.}
\email{javelezmarulanda@valdosta.edu}

\subjclass[2000]{Primary 16G10; Secondary 16G20, 20C20}
\keywords{Universal deformation rings, Frobenius algebras, stable endomorphism rings}

\begin{abstract}
Let $k$ be a field, and let $\Lambda$ be a finite dimensional $k$-algebra.
We prove that if $\Lambda$ is a self-injective algebra, then every finitely generated 
$\Lambda$-module $V$ whose stable endomorphism ring is isomorphic to $k$ has a universal
deformation ring $R(\Lambda,V)$ which is a complete local commutative Noetherian $k$-algebra 
with residue field $k$. If $\Lambda$ is also a Frobenius algebra, we show that 
$R(\Lambda,V)$ is stable under taking syzygies.
We investigate a particular Frobenius algebra $\Lambda_0$ of dihedral type, as introduced by 
Erdmann, and we determine $R(\Lambda_0,V)$ for every finitely generated $\Lambda_0$-module
$V$ whose stable endomorphism ring is isomorphic to $k$.
\end{abstract}

\maketitle


\section{Introduction}
\label{s:intro}
\setcounter{equation}{0}

Let $k$ be a field of arbitrary characteristic, and let $W$ be a complete local commutative
Noetherian ring with residue field $k$. Suppose $G$ is a profinite group and $V$ is a
finite dimensional $k$-vector space with a continuous $G$-action.  
If the $G$-endomorphism ring of $V$ is
isomorphic to $k$ and $\HH^1(G,\mathrm{End}_k(V))$ is finite dimensional over $k$, 
then $V$ has a universal deformation ring $R_W(G,V)$
(see e.g. \cite{lendesmit,maz2}).
The ring $R_W(G,V)$ is a complete local commutative
Noetherian $W$-algebra with residue field $k$ which is universal with respect to 
isomorphism classes of lifts (i.e. deformations) of $V$ over such $W$-algebras.
Universal deformation rings have become an important tool in number theory, in particular if $G$ is a
profinite Galois group (see e.g. \cite{boe,cornell} and their references).
It was shown in \cite{lendesmit} that the universal deformation ring $R_W(G,V)$ is 
isomorphic to the inverse limit of the universal deformation rings
$R_W(G_i,V)$ where the $G_i$ range over the (discrete) finite quotients of $G$ through which 
the action of $G$ on $V$ factors. Thus an important case to consider with respect to
deformation rings is when $G$ itself
is a finite group, i.e. when $kG$ is a finite dimensional $k$-algebra.

In this paper, we consider the case when $W=k$ and $kG$ is replaced by a more general
finite dimensional $k$-algebra $\Lambda$.
Deformations of modules for finite dimensional algebras have been studied by many authors 
in different contexts (see e.g. \cite{ile,laudal,yau} and their references). We
focus on deformations of modules for such algebras $\Lambda$ that have properties close to
those of group algebras of finite groups. Moreover, our deformations are over arbitrary complete
local commutative Noetherian $k$-algebras with residue field $k$
(see \S\ref{s:prelim} for precise definitions).

Our main motivation to examine this case is as follows.
Suppose $G$ is a finite group and $V$ is a finitely generated $kG$-module. If
the endomorphism
ring of $V$ is isomorphic to $k$, or more generally if $V$ is indecomposable, then 
$V$ belongs to a unique block $B$ of $kG$. 
Suppose $\Lambda$ is a finite dimensional $k$-algebra that is Morita equivalent to $B$ and
$V_\Lambda$ is the $\Lambda$-module that corresponds to $V$ under this Morita equivalence.
For example, $\Lambda$ can be taken to be the basic algebra of $B$.
Then the Morita equivalence provides a correspondence between the deformations of $V$ and
the deformations of $V_\Lambda$ over all complete local commutative Noetherian 
$k$-algebras with residue field $k$, and thus
between the deformation rings $R_k(G,V)$ and $R(\Lambda,V_\Lambda)$ (for a 
precise statement, see Proposition \ref{prop:morita}). 
Hence this enables us to use methods from the representation theory of finite dimensional 
algebras to study deformation rings of group representations. 
This approach has recently led to the solution of 
various open problems. For example, this was successfully used in \cite{3sim,bc4.9,bc5}
to construct representations whose universal deformation ring is not a complete intersection,
thus answering a question posed by Flach \cite{flach} in characteristic 2.

Let now  $\Lambda$ be an arbitrary finite dimensional $k$-algebra and let $V$
be a finitely generated $\Lambda$-module. 
Using Schlessinger's criteria \cite{Sch}, it follows that $V$ always has a versal deformation ring 
$R(\Lambda,V)$ which is a complete local commutative Noetherian $k$-algebra with residue field $k$
and that $R(\Lambda,V)$ is universal if the endomorphism ring of $V$ is 
isomorphic to $k$. 
Under the assumption that $\Lambda$ is a Frobenius algebra, we obtain the following stronger result,
which is proved in \S\ref{s:prelim}.
For a more precise statement, see Theorem \ref{thm:frobenius}.

\begin{thm}
\label{thm:mainudr}
Let $\Lambda$ be a Frobenius $k$-algebra and let $V$ be a finitely generated $\Lambda$-module
whose stable endomorphism ring is isomorphic to $k$. Then the versal deformation ring
$R(\Lambda,V)$ is universal. Moreover, 
$R(\Lambda,V)$ is isomorphic to $R(\Lambda,V\oplus P)$ for every finitely generated projective
$\Lambda$-module $P$, and $R(\Lambda,V)$ is isomorphic to $R(\Lambda,\Omega(V))$ where
$\Omega(V)$ is the first syzygy of $V$. 
\end{thm}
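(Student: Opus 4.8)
The plan is to deduce all three assertions from the rigidity of projective modules under deformation, together with Schlessinger's criteria \cite{Sch}. Throughout let $\hat{\mathcal C}$ denote the category of complete local commutative Noetherian $k$-algebras with residue field $k$ over which deformations are taken, and for $R\in\hat{\mathcal C}$ and a $\Lambda$-module $N$ write $\hat N=R\otimes_k N$, a module over $\Lambda\otimes_k R$. I use repeatedly that, since $\Lambda$ is Frobenius, $\Lambda\otimes_k R$ is a Frobenius $R$-algebra, so projective and injective $\Lambda\otimes_k R$-modules coincide and there is a natural isomorphism $\mathrm{Hom}_{\Lambda\otimes R}(M,\Lambda\otimes_k R)\cong\mathrm{Hom}_R(M,R)$ of $R$-modules; I also use that a homomorphism of finitely generated $R$-modules reducing modulo $\mathfrak m_R$ to an isomorphism is itself an isomorphism. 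The first step is a \emph{Key Lemma}: for $R\in\hat{\mathcal C}$, a lift $M$ of a $\Lambda$-module $V$ over $R$ (so $M$ is finitely generated and $R$-free with $k\otimes_R M\cong V$), and a finitely generated projective $\Lambda$-module $Q$, one has that $\hat Q$ is projective, hence injective, over $\Lambda\otimes_k R$; that every lift of $Q$ over $R$ is isomorphic to $\hat Q$; and that the reduction maps $\mathrm{Hom}_{\Lambda\otimes R}(M,\hat Q)\to\mathrm{Hom}_\Lambda(V,Q)$ and $\mathrm{Hom}_{\Lambda\otimes R}(\hat Q,M)\to\mathrm{Hom}_\Lambda(Q,V)$ are surjective. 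The first two parts are lifting of idempotents; for the last, the map out of $\hat Q$ is handled by projectivity of $\hat Q$ together with $M\twoheadrightarrow V$, and the map into $\hat Q$ is reduced to the case $Q=\Lambda$, where the Frobenius isomorphism above exhibits $\mathrm{Hom}_{\Lambda\otimes R}(M,\Lambda\otimes_k R)$ as finitely generated $R$-free with reduction $\mathrm{Hom}_\Lambda(V,\Lambda)$. In particular every homomorphism $V\to V$ factoring through a projective $\Lambda$-module lifts to an endomorphism of $M$ that factors through a lift of that projective.

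Given the Key Lemma, if $M'$ is a lift of $V\oplus P$ over $R$ with $P$ finitely generated projective, then lifting $P\hookrightarrow V\oplus P$ along $M'\twoheadrightarrow V\oplus P$ by projectivity of $\hat P$, and lifting the projection $V\oplus P\twoheadrightarrow P$ to $M'\to\hat P$, one obtains an endomorphism of $\hat P$ reducing to the identity, hence an isomorphism; so $\hat P$ is a direct summand of $M'$ and $M'\cong M''\oplus\hat P$ with $M''$ a lift of $V$. This is natural in $R$ and, by Krull–Schmidt over the semiperfect ring $\Lambda\otimes_k R$, inverse to $M''\mapsto M''\oplus\hat P$, so the deformation functors of $V$ and $V\oplus P$ are isomorphic and $R(\Lambda,V)\cong R(\Lambda,V\oplus P)$. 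Hence I may assume $V$ is indecomposable and non-projective, since $\underline{\mathrm{End}}_\Lambda(V)\cong k$ forces the non-projective part $V_0$ of $V$ to be indecomposable with $\underline{\mathrm{End}}_\Lambda(V_0)\cong k$, while $R(\Lambda,V)\cong R(\Lambda,V_0)$. Now $\mathrm{End}_\Lambda(V)$ is local with maximal ideal $\mathcal P(V,V)$, the ideal of endomorphisms factoring through a projective, because $\mathrm{End}_\Lambda(V)/\mathcal P(V,V)\cong\underline{\mathrm{End}}_\Lambda(V)\cong k$; thus $\mathrm{End}_\Lambda(V)=k\cdot\mathrm{id}_V\oplus\mathcal P(V,V)$ and $\mathrm{Aut}_\Lambda(V)=\{\lambda\,\mathrm{id}_V+r:\lambda\in k^\times,\ r\in\mathcal P(V,V)\}$. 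By Schlessinger's criteria the versal ring $R(\Lambda,V)$ is universal once one shows that for every small extension $R'\twoheadrightarrow R$ in $\hat{\mathcal C}$ and every lift $M'$ of $V$ over $R'$ with reduction $M$ over $R$, every automorphism of $M$ lifts to an automorphism of $M'$; since the obstruction to lifting a homomorphism along a small extension lies in an $\mathrm{Ext}^1$-group which is a $k$-vector space, on which $\mathfrak m_R$ acts trivially, this reduces to lifting $\mathrm{Aut}_\Lambda(V)$ to $\mathrm{Aut}_{\Lambda\otimes R}(M)$ for an arbitrary lift $M$ over any $R$. Given $\phi=\lambda\,\mathrm{id}_V+r\in\mathrm{Aut}_\Lambda(V)$, the scalar lifts to $\lambda\,\mathrm{id}_M$, and $r$, which factors as $V\to P_V\xrightarrow{\pi}V$ through the projective cover $\pi$, lifts by the Key Lemma to $\hat r\in\mathrm{End}_{\Lambda\otimes R}(M)$ (the factor $\pi$ lifting to the projective cover $\hat P_V\twoheadrightarrow M$); then $\lambda\,\mathrm{id}_M+\hat r$ reduces to $\phi$, and since $M$ is indecomposable, $\mathrm{End}_{\Lambda\otimes R}(M)$ is local and $\mathrm{End}_{\Lambda\otimes R}(M)\otimes_R k$ surjects onto the local ring $\mathrm{End}_\Lambda(V)$, so an element reducing to a unit is a unit; thus $\lambda\,\mathrm{id}_M+\hat r\in\mathrm{Aut}_{\Lambda\otimes R}(M)$ lifts $\phi$, and $R(\Lambda,V)$ is universal.

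For the syzygy, form a projective cover $P_M\twoheadrightarrow M$ of a lift $M$ of $V$ over $R$. Since $M$ is $R$-free the cover is $R$-split, so its kernel $\Omega_{\Lambda\otimes R}M$ is again finitely generated and $R$-free, and it reduces to $\Omega_\Lambda V$ (the reduction of the cover is a projective cover of $V$). This is well defined up to isomorphism and natural in $R$, hence defines a natural transformation from the deformation functor of $V$ to that of $\Omega_\Lambda V$; the cosyzygy $\Omega^{-1}_{\Lambda\otimes R}$, formed from injective hulls (available because $\Lambda\otimes_k R$ is self-injective), gives a natural transformation in the opposite direction, and the two are mutually inverse since $\Omega^{-1}\Omega M\cong M$ and $\Omega\Omega^{-1}N\cong N$ once one restricts to lifts with no projective summands—which holds here because $V$, and hence $M$, is indecomposable and non-projective. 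Therefore the deformation functors of $V$ and of $\Omega(V)$ are isomorphic and $R(\Lambda,V)\cong R(\Lambda,\Omega(V))$.

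I expect the main obstacle to be the universality statement, and more precisely the claim that automorphisms lift: it requires controlling $\mathrm{End}_{\Lambda\otimes R}(M)$ for a deformation $M$—surjectivity of the reduction onto $\mathrm{End}_\Lambda(V)$ together with the location of its kernel—simultaneously with the lifting of homomorphisms through projective modules. This is exactly where being Frobenius, rather than merely self-injective, is used: it makes $\Lambda\otimes_k R$ a Frobenius $R$-algebra, so lifts of projectives remain injective and the Frobenius duality $\mathrm{Hom}_{\Lambda\otimes R}(-,\Lambda\otimes_k R)\cong\mathrm{Hom}_R(-,R)$ is available, which is what renders formation of these Hom-spaces compatible with reduction; and the splitting $\mathrm{End}_\Lambda(V)=k\oplus\mathcal P(V,V)$ forced by $\underline{\mathrm{End}}_\Lambda(V)\cong k$ is what makes the "extra" automorphisms of $V$—those of the shape $\mathrm{id}_V+r$ with $r$ factoring through a projective—liftable to an arbitrary deformation.
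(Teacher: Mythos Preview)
Your overall strategy matches the paper's: a ``Key Lemma'' about lifting maps into and out of projectives (the paper's Claims~1 and~2), a scalar-plus-projective decomposition of automorphisms to verify Schlessinger's (H$_4$), Claim~6-style splitting for the $V\oplus P$ statement, and a syzygy/cosyzygy argument for $\Omega(V)$. But there are two genuine gaps.

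\textbf{The universality reduction is invalid.} You correctly state that (H$_4$) amounts to surjectivity of $\mathrm{Aut}_{R'\Lambda}(M')\to\mathrm{Aut}_{R\Lambda}(M)$ for every small extension $R'\twoheadrightarrow R$, but your claim that this ``reduces to lifting $\mathrm{Aut}_\Lambda(V)$ to $\mathrm{Aut}_{\Lambda\otimes R}(M)$'' does not follow from the obstruction living in a $k$-vector space. The obstruction map $\mathrm{End}_{R\Lambda}(M)\to\mathrm{Ext}^1$ is $R'$-linear and kills $\mathfrak m_{R'}\cdot\mathrm{End}_{R\Lambda}(M)$, so the obstruction of $\phi$ depends only on $\phi\bmod\mathfrak m_R\cdot\mathrm{End}_{R\Lambda}(M)$; but this quotient need not inject into $\mathrm{End}_\Lambda(V)$, and in any case an automorphism $\phi$ of $M$ reducing to $\mathrm{id}_V$ cannot be lifted by your recipe (which would produce $\mathrm{id}_{M'}$, whose reduction to $M$ is $\mathrm{id}_M$, not $\phi$). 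What is actually needed---and what the paper proves as its Claim~4---is that the scalar map $R\to\underline{\mathrm{End}}_{R\Lambda}(M)$ is \emph{surjective}, i.e.\ every $\phi\in\mathrm{End}_{R\Lambda}(M)$ decomposes as $s\cdot\mathrm{id}_M+\sigma$ with $s\in R$ and $\sigma$ factoring through a projective $R\Lambda$-module. With this in hand your lifting argument works verbatim at level $R$ rather than at level $k$: lift $s\in R$ to $R'$ and lift $\sigma$ via your Key Lemma. Claim~4 is itself proved by induction along small extensions using your Key Lemma and the case $R=k$, so you have all the ingredients; you just need to insert this intermediate step.

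\textbf{$\Lambda\otimes_k R$ is not self-injective in general.} Your Key Lemma asserts that $\hat Q$ is injective over $\Lambda\otimes_k R$ ``since $\Lambda\otimes_k R$ is a Frobenius $R$-algebra''; but a Frobenius $R$-algebra is self-injective only when $R$ is itself self-injective, i.e.\ Gorenstein, which fails for instance for $R=k[x,y]/(x,y)^2$. This does not harm your Key Lemma (your Frobenius-duality argument for surjectivity of $\mathrm{Hom}_{R\Lambda}(M,\widehat\Lambda)\to\mathrm{Hom}_\Lambda(V,\Lambda)$ is correct and does not use injectivity), but it breaks your cosyzygy construction $\Omega^{-1}_{\Lambda\otimes R}$ ``formed from injective hulls''. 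The paper instead proves (Claim~7) that $Q^*=\mathrm{Hom}_R(Q,R)$ is projective as a right $R\Lambda$-module whenever $Q$ is projective as a left $R\Lambda$-module, and then runs the syzygy argument via $R$-duality. Equivalently, you can build $\Omega^{-1}$ by lifting the $\Lambda$-injective hull $V\hookrightarrow I(V)$ (which is projective over $\Lambda$) to $M\to\widehat{I(V)}$ using your Key Lemma and taking the cokernel; but you should not invoke injective hulls over $\Lambda\otimes_k R$.
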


Since group algebras of finite groups over $k$ are in
particular Frobenius algebras, this generalizes  \cite[Prop. 2.1 and Cor. 2.5]{bc} which give the 
corresponding results for group algebras.
The proof of Theorem \ref{thm:mainudr} follows the basic outline of the proofs in \cite{bc}, 
but in several instances arguments that are specific to group algebras have to be replaced
by arguments that work for arbitrary Frobenius algebras.

In \S\ref{s:particular}, we assume that $k$ is algebraically closed and we
consider the particular Frobenius algebra $\Lambda_0=kQ/I$ where 
the quiver $Q$ and the ideal $I$ of the path algebra $kQ$ are as in Figure \ref{fig:algebra}.
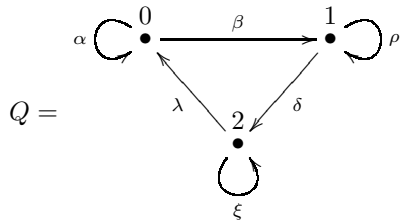
\begin{figure}[ht] \hrule \caption{\label{fig:algebra} The basic $k$-algebra $\Lambda_0=kQ/I$.}
$$Q=\vcenter{\xymatrix @R=-.2pc {
0&&1\\
\ar@(ul,dl)_{\alpha} \bullet \ar[rr]^{\beta}  &&\bullet\ar[ldddddddd]^{\delta} \ar@(ur,dr)^{\rho} 
\\&&\\&&\\&&\\&&\\&&\\&&\\&2&\\ 
&
\bullet\ar[uuuuuuuul]^{\lambda}
\ar@(ld,rd)_{\xi} & }}$$
$$I=\langle \alpha\lambda,\lambda\xi,
\xi\delta,\delta\rho,\rho\beta,\beta\alpha,
\alpha^2-\lambda\delta\beta,\rho^2-\beta\lambda\delta,\xi^2-\delta\beta\lambda\rangle.$$
\vspace{1ex}
\hrule
\end{figure}
The algebra $\Lambda_0$ belongs to the class of algebras of dihedral type which were introduced
by Erdmann in \cite{erd} to classify all tame blocks of group algebras of finite groups with
dihedral defect groups up to Morita equivalence. However, the algebra $\Lambda_0$ 
is not Morita equivalent to a block of a group algebra. Since $\Lambda_0$ is a special
biserial algebra, all the non-projective indecomposable $\Lambda_0$-modules can be described
combinatorially as so-called string and band modules (see \cite{buri} and \S \ref{s:prelimstring}). 
The components of the stable Auslander-Reiten quiver $\Gamma_s(\Lambda_0)$ consisting
of string modules are two $3$-tubes and infinitely many components of type $\mathbb{Z}A_\infty^\infty$,
whereas the components consisting of band modules are infinitely many $1$-tubes. 
If $M$ and $N$ are two indecomposable  $\Lambda_0$-modules belonging to the same component
of $\Gamma_s(\Lambda_0)$, we say $N$ is a \emph{successor} of $M$ 
if there is an irreducible homomorphism $M\to N$.

A summary of our main results concerning $\Lambda_0=kQ/I$ is as follows. The precise statements 
can be found in Theorem \ref{thm:stablend}, Propositions \ref{prop:simples}, \ref{prop:stringtype0},
\ref{prop:3tubes}, Theorem \ref{thm:stablendbands} and Proposition \ref{prop:udrbands}.
\begin{thm}
\label{thm:main}
Let $\Lambda_0=kQ/I$ be as in Figure $\ref{fig:algebra}$, and suppose $\mathfrak{C}$ is a component
of the stable Auslander-Reiten quiver $\Gamma_s(\Lambda_0)$.
\begin{itemize}
\item[(i)] If $\mathfrak{C}$ is one of the two $3$-tubes, then $\Omega(\mathfrak{C})$ is the 
other $3$-tube. There are exactly three $\Omega^2$-orbits of modules
in $\mathfrak{C}$ whose stable endomorphism ring is isomorphic to $k$. 
If $U_0$ is a module that belongs to the boundary of $\mathfrak{C}$, then these three 
$\Omega^2$-orbits are represented by $U_0$, by a successor $U_1$ of $U_0$, and by a 
successor $U_2$ of $U_1$ that does not lie in the $\Omega^2$-orbit of $U_0$. The universal
deformation rings are 
$$R(\Lambda_0,U_0)\cong R(\Lambda_0,U_1)\cong k,\qquad R(\Lambda_0,U_2)\cong k[[t]].$$

\item[(ii)] There are infinitely many components of $\Gamma_s(\Lambda_0)$ of type $\mathbb{Z}
A_\infty^\infty$ that each contain a module whose stable endomorphism ring is isomorphic to $k$.
If $\mathfrak{C}$ is such a component, then $\mathfrak{C}=\Omega(\mathfrak{C})$ and there
are exactly six  $\Omega^2$-orbits $($resp. exactly three $\Omega$-orbits$)$ of modules 
in $\mathfrak{C}$ whose stable endomorphism ring is isomorphic to $k$. If $V_0$ is a module
in $\mathfrak{C}$ of minimal length, then these three $\Omega$-orbits are represented
by $V_0$, by a successor $V_1$ of $V_0$ that does not lie in the $\Omega$-orbit of $V_0$, 
and by a successor $V_2$ of $V_1$ that does not lie in the $\Omega^2$-orbit of $V_0$. 
The universal deformation rings are 
$$R(\Lambda_0,V_0)\cong k[[t]]/(t^2),\qquad R(\Lambda_0,V_1)\cong k,\qquad R(\Lambda_0,V_2)\cong k[[t]].$$

\item[(iii)] There are infinitely many $1$-tubes of $\Gamma_s(\Lambda_0)$ that each contain a module 
whose stable endomorphism ring is isomorphic to $k$. If $\mathfrak{C}$ is such a component, 
then there is exactly one $\Omega^2$-orbit of modules in $\mathfrak{C}$ whose stable 
endomorphism ring is isomorphic to $k$, represented by a module $W_0$ belonging to the 
boundary of $\mathfrak{C}$. The universal deformation ring of $W_0$ is 
$$R(\Lambda_0,W_0)\cong k[[t]].$$
\end{itemize}
\end{thm}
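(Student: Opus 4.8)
The plan is to treat the three cases in Theorem~\ref{thm:main} in turn, in each case first working out the relevant component of $\Gamma_s(\Lambda_0)$ combinatorially using the description of $\Lambda_0$ as a special biserial algebra, and then computing stable endomorphism rings and deformation rings of the finitely many relevant orbits of modules by hand. Since by Theorem~\ref{thm:mainudr} the versal deformation ring of any module $V$ with $\underline{\operatorname{End}}_{\Lambda_0}(V)\cong k$ is universal and is an invariant of the $\Omega$-orbit (indeed of the stable isomorphism class of $V$ up to projectives), it suffices to identify one representative in each $\Omega^2$-orbit and compute $R(\Lambda_0,-)$ there. First I would recall from \S\ref{s:prelimstring} the classification of the non-projective indecomposables of $\Lambda_0$ as string and band modules, the explicit form of $\Omega$ on strings (concatenation/truncation with the relevant hooks and cohooks coming from the relations in $I$), and the shape of the irreducible maps, so that ``successor in $\mathfrak{C}$'' becomes an explicit operation on strings; I would also record the syzygy action on the components, namely that $\Omega$ interchanges the two $3$-tubes and fixes each $\mathbb{Z}A_\infty^\infty$-component and each relevant $1$-tube.

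Next, for part~(i), I would exhibit the two $3$-tubes explicitly, compute $\Omega$ on their boundary modules to see that $\Omega$ swaps them and that $\Omega^2$ has period-$3$ behaviour on the tube, hence exactly three $\Omega^2$-orbits; then for a boundary module $U_0$ I would compute $\underline{\operatorname{End}}_{\Lambda_0}(U_0)$ directly from the string combinatorics (graph maps between strings) and check it is $k$, do the same for the successor $U_1$, and for a further successor $U_2$ in a new orbit. The deformation-ring computations then reduce to: (a) computing $\dim_k \underline{\operatorname{Ext}}^1_{\Lambda_0}(U_i,U_i)$, which bounds the number of generators of $R(\Lambda_0,U_i)$ (and when it is $0$ forces $R\cong k$), and (b) for $U_2$, where the tangent space is one-dimensional, constructing an explicit non-trivial lift of $U_2$ over $k[[t]]$ and arguing it is versal, so that $R(\Lambda_0,U_2)\cong k[[t]]$; the fact that $U_2$ has infinite $\Omega$-period (it lives in a $3$-tube but its $\Omega^2$-orbit together with $U_0$ exhausts the orbits) must be compatible with a formally smooth one-parameter family. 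For part~(ii) the same scheme applies with the $\mathbb{Z}A_\infty^\infty$-geometry: a minimal-length string module $V_0$, a successor $V_1$ off its $\Omega$-orbit, and a successor $V_2$ off the $\Omega^2$-orbit of $V_0$; here $R(\Lambda_0,V_0)\cong k[[t]]/(t^2)$ is the interesting value, obtained by showing $\dim_k\underline{\operatorname{Ext}}^1=1$ but that the obvious lift over $k[[t]]/(t^2)$ does \emph{not} extend to $k[[t]]/(t^3)$ --- i.e. the obstruction in $\underline{\operatorname{Ext}}^2$ is non-zero --- while $R(\Lambda_0,V_1)\cong k$ and $R(\Lambda_0,V_2)\cong k[[t]]$ as before. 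For part~(iii), the band modules: each relevant $1$-tube has a single $\Omega^2$-orbit represented by a boundary band module $W_0$ parametrised by $k^\times$ (the eigenvalue of the band), and the one-parameter family of band modules itself provides the non-trivial lift showing $R(\Lambda_0,W_0)\cong k[[t]]$, after checking $\dim_k\underline{\operatorname{Ext}}^1_{\Lambda_0}(W_0,W_0)=1$ and that this family is versal.

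The main obstacle I anticipate is the bookkeeping in two places. First, correctly enumerating which string/band modules in each component have stable endomorphism ring exactly $k$ (rather than something larger): this requires a careful analysis of graph maps between strings and of which composites factor through projectives, and getting the count of $\Omega^2$-orbits right (three in the $3$-tubes, six in the $\mathbb{Z}A_\infty^\infty$-components, one in each relevant $1$-tube) depends on knowing precisely how $\Omega^2$ moves a module within its component. Second, and more seriously, distinguishing $R\cong k[[t]]/(t^2)$ from $R\cong k[[t]]$ in case~(ii): both have one-dimensional tangent space, so one must either compute the obstruction map $\underline{\operatorname{Ext}}^1\to\underline{\operatorname{Ext}}^2$ explicitly (showing it is injective, resp.\ zero, on the relevant class) or, equivalently, attempt to build the lift order by order over $k[[t]]/(t^n)$ and locate the precise $n$ at which it obstructs; I would handle the ``unobstructed'' cases $k[[t]]$ by writing down an actual family of $\Lambda_0$-modules over $k[[t]]$ (a string or band module with a parameter inserted into one of the arrows subject to the relations in $I$) and invoking the universal property, and the $k[[t]]/(t^2)$ case by the obstruction computation plus Theorem~\ref{thm:mainudr} to pin down that no larger quotient works. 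Throughout, Theorem~\ref{thm:mainudr} is used repeatedly to pass freely between a module and its syzygies and to discard projective summands, which is what makes reducing each component to three (or one) explicit computations feasible.
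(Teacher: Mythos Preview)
Your proposal is essentially correct and follows the same overall strategy as the paper: reduce via Theorem~\ref{thm:mainudr} to a finite list of representatives in each component, compute $\underline{\mathrm{End}}$ and $\mathrm{Ext}^1$ combinatorially via Krause's description of graph maps, and then determine $R$ by (a) $\mathrm{Ext}^1=0 \Rightarrow R\cong k$, (b) $\mathrm{Ext}^1\cong k$ plus an explicit free lift over $k[[t]]$ (built by inserting $t$ into one matrix entry, exactly as you describe) $\Rightarrow R\cong k[[t]]$, and (c) $\mathrm{Ext}^1\cong k$ plus non-liftability to $k[[t]]/(t^3)$ $\Rightarrow R\cong k[[t]]/(t^2)$.

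Two small points where the paper differs from what you wrote. First, for the $k[[t]]/(t^2)$ case the paper does \emph{not} compute an $\mathrm{Ext}^2$ obstruction class. Instead it argues directly: a hypothetical lift $M_0$ over $k[[t]]/(t^3)$ sits in $0\to V_0\to M_0\to M_0/t^2M_0\to 0$; one checks $\mathrm{Ext}^1_{\Lambda_0}(M_0/t^2M_0,V_0)=0$, so this splits as $\Lambda_0$-modules, and then an explicit look at the possible $t$-actions forces $t^2M_0=0$, contradicting freeness. This is your ``build order by order and locate where it obstructs'' alternative, not the Massey-product style $\mathrm{Ext}^2$ computation. Second, your remark that $\Omega$ fixes each relevant $1$-tube is not quite right: the paper shows $\Omega(M_{B,\mu})\cong M_{B',\mu'}$ with in general a different band or parameter (e.g.\ $\mu\mapsto \mu^{-1}$), so $\Omega$ can swap $1$-tubes; it is $\Omega^2$ that fixes them, which is all the statement needs. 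Finally, be aware that the paper spends substantial effort (the conditions $(+)$ and $(++)$ and an entire appendix) on the step you flag as ``bookkeeping'': pinning down exactly which $\mathbb{Z}A_\infty^\infty$-components and which $1$-tubes contain a module with $\underline{\mathrm{End}}\cong k$ is the most delicate part and is what produces the ``infinitely many'' in parts (ii) and (iii).
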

Note that if $M$ is a finitely generated $\Lambda_0$-module whose stable endomorphism ring 
is isomorphic to $k$, then $M\cong U\oplus P$ where $U$ is an indecomposable $\Lambda_0$-module
whose stable endomorphism ring is isomorphic to $k$ and $P$ is a projective $\Lambda_0$-module.
Since by Theorem \ref{thm:mainudr}, $R(\Lambda_0,M)\cong R(\Lambda_0,U)$, it follows
that Theorem \ref{thm:main} describes the universal deformation ring for every finitely generated
$\Lambda_0$-module whose stable endomorphism ring is isomorphic to $k$.

Theorem \ref{thm:main} shows significant differences between $\Lambda_0$ and the blocks of 
group algebras with dihedral defect groups and precisely three isomorphism classes of simple
modules which were studied in \cite{bl,3sim}. Namely for these 
blocks, there are only finitely many components of the stable Auslander-Reiten quiver containing 
modules whose stable endomorphism rings are isomorphic to $k$. Moreover, none of the $1$-tubes 
contain such modules.

The proof of Theorem \ref{thm:main} uses the combinatorial description of the 
indecomposable $\Lambda_0$-modules and of the components of the stable Auslander-Reiten
quiver of $\Lambda_0$. 
To characterize the infinitely many components
in parts (ii) and (iii) of Theorem \ref{thm:main}, we introduce special words that start and end 
at the vertex $0$ of the quiver $Q$ and which
we call words of type $0$ (see Definition \ref{def:important}). 
Using an inductive process, we provide a precise description
of the modules $V_0$ from part (ii) (see Theorem \ref{thm:stablend} and Proposition \ref{prop:string0+})
and of the modules $W_0$ from part (iii) (see Theorem \ref{thm:stablendbands} and 
Proposition \ref{prop:bandstype0}).

Part of this paper constitutes the Ph.D. thesis of the second author under the supervision
of the first author \cite{velez}.


\section{Versal and universal deformation rings}
\label{s:prelim}
\setcounter{equation}{0}

Let $k$ be a field of arbitrary characteristic. Let $\hat{\mathcal{C}}$ be the category of 
all complete local commutative Noetherian $k$-algebras with residue field $k$. The morphisms in 
$\hat{\mathcal{C}}$ are continuous $k$-algebra homomorphisms which induce the identity map on $k$. 
Let $\mathcal{C}$ be the full subcategory of $\hat{\mathcal{C}}$ of Artinian objects.

Suppose $\Lambda$ is a finite dimensional $k$-algebra and $V$ is a finitely generated
$\Lambda$-module. A \emph{lift} of $V$ over an object $R$ in $\hat{\mathcal{C}}$ is a finitely generated 
$R\otimes_k \Lambda$-module $M$ which
is free over $R$ together with a $\Lambda$-module isomorphism $\phi:k\otimes_R M\to V$. Two lifts 
$(M,\phi)$ and $(M',\phi')$ of $V$ over $R$ are \emph{isomorphic} if there exists an 
$R\otimes_k\Lambda$-module isomorphism  $f:M\to M'$ such that 
$\phi'\circ(k\otimes_R f) = \phi$. The isomorphism class of a lift $(M,\phi)$ of $V$ 
over $R$ is denoted by $[M,\phi]$ and called a \emph{deformation} of $V$ over $R$. 
We denote the set of all such deformations over $R$ by $\mathrm{Def}_{\Lambda}(V,R)$. 
The \emph{deformation functor} $\hat{F}_V:\hat{\mathcal{C}} \to \mathrm{Sets}$ is defined
to be the following covariant functor. Let $R$ be an object in $\hat{\mathcal{C}}$ and
$\alpha:R\to R'$ a morphism in $\hat{\mathcal{C}}$. Then $\hat{F}_V(R)=\mathrm{Def}_{\Lambda}(V,R)$ 
and $\hat{F}_V(\alpha): \mathrm{Def}_{\Lambda}(V,R)\to \mathrm{Def}_{\Lambda}(V,R')$ is
defined by $\hat{F}_V(\alpha)([M,\phi]) = [R'\otimes_{R,\alpha}M,\phi_\alpha]$ where 
$\phi_\alpha:k\otimes_{R'}(R'\otimes_{R,\alpha}M) \to V$ is the composition 
$k\otimes_{R'}(R'\otimes_{R,\alpha}M) \cong k\otimes_R M\xrightarrow{\phi} V$
of $\Lambda$-modules.
Let $F_V:\mathcal{C}\to\mathrm{Sets}$ be the restriction of $\hat{F}_V$ to the subcategory
$\mathcal{C}$ of Artinian objects. 
Let $k[\epsilon]$, where $\epsilon^2=0$, denote the ring of dual numbers over $k$. 
The tangent space of $\hat{F}_V$ and of $F_V$ is defined to be the set
$t_V=F_V(k[\epsilon])$.
We say that a functor $D:\hat{\mathcal{C}}\to \mathrm{Sets}$ is 
continuous, if for all objects $R$ in $\hat{\mathcal{C}}$ whose maximal ideal is $m_R$ we have
$\displaystyle D(R)=\lim_{\stackrel{\longleftarrow}{i}} D(R/m_R^i)$.

Using Schlessinger's criteria \cite[Thm. 2.11]{Sch}, it is straightforward to prove the following result:
\begin{prop}
\label{prop:versal}
The functor $F_V$ has a pro-representable hull 
$R(\Lambda,V)\in \mathrm{Ob}(\hat{\mathcal{C}})$, 
as defined in \cite[Def. 2.7]{Sch}, 
and the functor $\hat{F}_V$ is continuous.
Moreover, there is a $k$-vector space isomorphism 
$t_V \cong\mathrm{Ext}^1_{\Lambda}(V,V)$.
If $\mathrm{End}_{\Lambda}(V)= k$, then $\hat{F}_V$ is represented
by $R(\Lambda,V)$. 
\end{prop}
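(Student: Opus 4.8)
The plan is to verify the three hypotheses of Schlessinger's criteria \cite[Thm.~2.11]{Sch} for the restricted functor $F_V\colon\mathcal{C}\to\mathrm{Sets}$, thereby obtaining the pro-representable hull $R(\Lambda,V)$, and then to upgrade hull to representing object when $\mathrm{End}_\Lambda(V)=k$. Recall that Schlessinger's conditions concern the natural map
$$F_V(R'\times_R R'')\longrightarrow F_V(R')\times_{F_V(R)}F_V(R'')$$
for surjections $R'\to R$, $R''\to R$ in $\mathcal{C}$: one needs (H1) this map is surjective whenever $R''\to R$ is a small extension, (H2) it is bijective when $R=k$ and $R''=k[\epsilon]$, and (H3) the tangent space $t_V=F_V(k[\epsilon])$ is finite dimensional over $k$; pro-representability as a hull follows, and $F_V$ is actually pro-represented iff in addition (H4) the map is bijective for every small extension $R''\to R$ with $R'=R''$. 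The key input making all of this work is that lifts are required to be \emph{free} over the coefficient ring, so that tensor products and fibre products of modules behave well.

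First I would establish the tensor-product description of fibre products: if $(M',\phi')$ and $(M'',\phi'')$ are lifts of $V$ over $R'$ and $R''$ that agree over $R$ (i.e. induce isomorphic deformations over $R$), then $M'\times_{M_R}M''$ is an $(R'\times_R R'')\otimes_k\Lambda$-module, free over $R'\times_R R''$ because each $M'$, $M''$, $M_R$ is free over its base and the extension is small; this gives surjectivity (H1) directly. For (H2), when $R=k$ one has $R'\times_k k[\epsilon]$, and I would check that a pair of deformations — one over $R'$, one over $k[\epsilon]$, both restricting to $V$ over $k$ — glues to a \emph{unique} deformation over the fibre product; uniqueness of the gluing isomorphism here is exactly where one needs that any automorphism of $V$ as a $\Lambda$-module reduces the ambiguity appropriately, but over $k$ every lift restriction is just $V$ itself so the gluing is canonical. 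For (H3), the identification $t_V\cong\mathrm{Ext}^1_\Lambda(V,V)$ is the standard computation: a lift of $V$ over $k[\epsilon]$ is a $k[\epsilon]\otimes_k\Lambda$-module free over $k[\epsilon]$ with residue $V$, which is the same datum as a short exact sequence $0\to V\to M\to V\to 0$ of $\Lambda$-modules up to the appropriate equivalence, hence an element of $\mathrm{Ext}^1_\Lambda(V,V)$; one checks this bijection is $k$-linear for the natural vector space structures. Finiteness of $\mathrm{Ext}^1_\Lambda(V,V)$ is automatic since $\Lambda$ is finite dimensional and $V$ finitely generated.

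For the last assertion, when $\mathrm{End}_\Lambda(V)=k$ the functor $\hat F_V$ is not merely hull-representable but pro-represented by $R(\Lambda,V)$. The standard argument (cf.\ \cite[Thm.~2.11]{Sch} and its proof) is that the failure of full pro-representability is governed by the obstruction to lifting automorphisms, which is controlled by $\mathrm{End}_\Lambda(V)/k$ together with the image of a certain map from $\mathrm{End}_\Lambda(V)$ into the tangent space; when $\mathrm{End}_\Lambda(V)=k$ this obstruction vanishes and condition (H4) holds, so $F_V$ is pro-representable on $\mathcal{C}$, and then continuity of $\hat F_V$ extends the representability to all of $\hat{\mathcal{C}}$. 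Finally, continuity of $\hat F_V$ itself: since any finitely generated module $M$ over $R\otimes_k\Lambda$ that is free over $R$ is determined by the compatible system $(M/m_R^i M)_i$ and conversely such a system with free quotients glues, one gets $\mathrm{Def}_\Lambda(V,R)=\varprojlim_i\mathrm{Def}_\Lambda(V,R/m_R^i)$.

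The main obstacle I anticipate is the careful bookkeeping in (H2) and (H4) — specifically, ensuring that the gluing of deformations over a fibre product is well-defined on \emph{isomorphism classes} of lifts and that the relevant gluing isomorphisms are unique modulo the action of $\mathrm{Aut}_\Lambda(V)$. The freeness-over-$R$ hypothesis is what tames this, since it forces reductions and tensor products to be exact and flat in the needed directions; but one must be attentive to the fact that an isomorphism of lifts over $R$ need not lift to an isomorphism of the given lifts over $R'$ and $R''$ individually, which is precisely why one only gets a hull in general and needs $\mathrm{End}_\Lambda(V)=k$ for genuine pro-representability. Everything else is a routine transcription of Schlessinger's formalism to the module-deformation setting, parallel to the group-representation case in \cite{maz2,lendesmit}.
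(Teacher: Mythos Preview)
Your proposal is correct and follows the same route as the paper, which in fact gives no proof beyond declaring the result ``straightforward'' from Schlessinger's criteria \cite[Thm.~2.11]{Sch} and sketching the tangent space identification $t_V\cong\mathrm{Ext}^1_\Lambda(V,V)$ in Remark~\ref{rem:newrem}(ii). Your outline of (H1)--(H4), the $\mathrm{Ext}^1$ computation, and continuity is exactly the verification the paper leaves to the reader.
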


\begin{rem}
\label{rem:newrem}
\begin{enumerate}
\item[(i)]
The first sentence of Proposition \ref{prop:versal} means that there exists 
a deformation $[U(\Lambda,V),\phi_U]$ of $V$ over $R(\Lambda,V)$ 
with the following property. For each $R\in \mathrm{Ob}(\hat{\mathcal{C}})$, the map
$\upsilon_R:\mathrm{Hom}_{\hat{\mathcal{C}}}(R(\Lambda,V),R) \to \hat{F}_V(R)$ 
given by $\alpha \mapsto \hat{F}_V(\alpha)([U(\Lambda,V),\phi_U])$ is surjective,
and this map is bijective if $R$ is the ring of dual numbers $k[\epsilon]$.

If $\hat{F}_V$ is represented by $R(\Lambda,V)$, in the sense that $\hat{F}_V$ is
naturally isomorphic to the functor $\mathrm{Hom}_{\Lambda}(R(\Lambda,V),-)$, then
the above map $\upsilon_R$ is bijective for all $R\in \mathrm{Ob}(\hat{\mathcal{C}})$.
\item[(ii)] The isomorphism $t_V \cong\mathrm{Ext}^1_{\Lambda}(V,V)$ is established in
the same manner as in \cite[\S22]{maz2}. Namely, given a lift $(M,\phi)$ of $V$ over
$k[\epsilon]$, 
we have $\Lambda$-module isomorphisms 
$\epsilon\cdot M\cong V$ and $M/\epsilon\cdot M\cong V$.  Thus we
obtain a short exact sequence of $\Lambda$-modules $\mathcal{E}_M:0\to V \to M \to V \to 0$.
The map $s:t_V \to \mathrm{Ext}^1_{\Lambda}(V,V)$ which sends $[M,\phi]$ to the 
class $[\mathcal{E}_M]$ in $\mathrm{Ext}^1_{\Lambda}(V,V)$ 
is well-defined. It is straightforward to see that $s$ is a $k$-vector space homomorphism. 
On the other hand, given an
extension $\mathcal{E}:0 \to V \xrightarrow{u_1} V_1\xrightarrow{u_2} V \to 0$ of $\Lambda$-modules, 
we can define a $k[\epsilon]$-module structure on $V_1$ by letting $\epsilon$ act
as $u_1\circ u_2$.  Thus there is a $\Lambda$-module
isomorphism $\psi:V_1/\epsilon V_1\to V$, and $(V_1,\psi)$ is a lift of $V$ over
$k[\epsilon]$. Sending $[\mathcal{E}]$ to $[V_1,\psi]$ defines the inverse of $s$.
\end{enumerate}
\end{rem}

\begin{dfn}
\label{def:udr}
Using the notation from Proposition \ref{prop:versal} and Remark \ref{rem:newrem}(i),
the ring $R(\Lambda,V)$  is called the \emph{versal deformation ring} of $V$ and
$[U(\Lambda,V),\phi_U]$ is called the \emph{versal deformation} of $V$. In general,
the isomorphism type of $R(\Lambda,V)$ is unique up to a (non-canonical) isomorphism.

If $R(\Lambda,V)$ represents $\hat{F}_V$, then $R(\Lambda,V)$  is called 
the \emph{universal deformation ring} of $V$ and $[U(\Lambda,V),\phi_U]$ is called the 
\emph{universal deformation} of $V$. In this case, $R(\Lambda,V)$ is unique up to 
a canonical isomorphism.
\end{dfn}

\begin{rem}
\label{rem:weakdeformations}
Note that some authors consider a weaker notion of deformations.
Namely, let $\Lambda$ and $V$ be as above, let $R$ be an object in 
$\hat{\mathcal{C}}$ and let $(M,\phi)$ be a lift of $V$ over $R$. Then the isomorphism class $[M]$ of
$M$ as an $R\otimes_k\Lambda$-module is called a \emph{weak deformation} of $V$ over $R$
(see e.g. \cite[\S 5.2]{keller}). We can also define the \emph{weak deformation functor} $\hat{F}^w_V:
\hat{\mathcal{C}}\to \mathrm{Sets}$ which sends an object $R$ in $\hat{\mathcal{C}}$ to
the set of weak deformations of $V$ over $R$ and a morphism $\alpha:R\to R'$ in $\hat{\mathcal{C}}$
to the map $\hat{F}^w_V(\alpha): \hat{F}^w_V(R)\to \hat{F}^w_V(R')$ which is defined by
$\hat{F}^w_V(\alpha)([M]) = [R'\otimes_{R,\alpha}M]$.

In general, a weak deformation of $V$ over $R$ identifies more lifts than a deformation of $V$ over $R$
that respects the isomorphism $\phi$ of a representative $(M,\phi)$.
However, if $\Lambda$ is self-injective and the stable endomorphism ring 
$\underline{\mathrm{End}}_{\Lambda}(V)$ is isomorphic to $k$, then the two deformation
functors $\hat{F}_V$ and $\hat{F}^w_V$ are naturally isomorphic (see Theorem \ref{thm:frobenius}(i)).
\end{rem}

We now prove that Morita equivalences preserve versal deformation rings.
Recall that two finitely generated $k$-algebras $\Lambda$ and $\Lambda'$ are said to be 
Morita equivalent if they have equivalent module categories. By the Morita theorems (see for example
\cite[\S3D]{CR}),  $\Lambda$ and $\Lambda'$ are Morita equivalent if and only if there
exist a $\Lambda'$-$\Lambda$-bimodule $P$ and a $\Lambda$-$\Lambda'$-bimodule $Q$
such that $P$ and $Q$ are finitely generated projective both as left and as right modules and
$Q\otimes_{\Lambda'} P\cong \Lambda$ as $\Lambda$-$\Lambda$-bimodules and
$P\otimes_\Lambda Q\cong \Lambda'$ as $\Lambda'$-$\Lambda'$-bimodules.
In this situation, we say that $P$ and $Q$ induce a Morita equivalence between $\Lambda$ and 
$\Lambda'$. Note that if $P$ and $Q$ are such bimodules, then $P\otimes_{\Lambda}-$ and
$Q\otimes_{\Lambda'}-$ give mutually inverse equivalences between the module categories of
$\Lambda$ and $\Lambda'$.

\begin{prop}
\label{prop:morita}
Let $\Lambda$ and $\Lambda'$ be finitely generated $k$-algebras. Suppose that
$P$ is a $\Lambda'$-$\Lambda$-bimodule and $Q$ is a $\Lambda$-$\Lambda'$-bimodule
which induce a Morita equivalence between $\Lambda$ and $\Lambda'$. Let $V$ be a
finitely generated $\Lambda$-module, and define $V'=P\otimes_\Lambda V$. Then the versal
deformation rings $R(\Lambda,V)$ and $R(\Lambda',V')$ are isomorphic in $\hat{\mathcal{C}}$.
\end{prop}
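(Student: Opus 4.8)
The plan is to exploit the fact that a Morita equivalence, being induced by tensoring with a bimodule that is projective on both sides, extends base-ring-by-base-ring to a compatible family of equivalences on deformation categories. First I would fix an object $R$ in $\hat{\mathcal{C}}$ and observe that $R\otimes_k P$ is an $(R\otimes_k\Lambda')$-$(R\otimes_k\Lambda)$-bimodule and $R\otimes_k Q$ is an $(R\otimes_k\Lambda)$-$(R\otimes_k\Lambda')$-bimodule; since $-\otimes_k R$ is exact and preserves the relevant finiteness and projectivity (as $R$ is a flat, in fact free since local Noetherian with residue field $k$ we only need it over the Artinian quotients, but in any case $R$-free on a $k$-basis), the isomorphisms $Q\otimes_\Lambda P\cong\Lambda$ and $P\otimes_\Lambda Q\cong\Lambda'$ tensor up to bimodule isomorphisms $(R\otimes_k Q)\otimes_{R\otimes_k\Lambda}(R\otimes_k P)\cong R\otimes_k\Lambda$ and symmetrically. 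Hence $R\otimes_k P$ and $R\otimes_k Q$ induce a Morita equivalence between $R\otimes_k\Lambda$ and $R\otimes_k\Lambda'$.

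Next I would check that this equivalence carries lifts to lifts. Given a lift $(M,\phi)$ of $V$ over $R$, set $M'=(R\otimes_k P)\otimes_{R\otimes_k\Lambda}M$. Because $P$ is right-projective over $\Lambda$, hence $R\otimes_k P$ is a direct summand of a finite free $R\otimes_k\Lambda$-module as a right module, and $M$ is $R$-free, one gets that $M'$ is $R$-free; moreover $k\otimes_R M'\cong (k\otimes_R(R\otimes_k P))\otimes_{k\otimes_R(R\otimes_k\Lambda)}(k\otimes_R M)\cong P\otimes_\Lambda(k\otimes_R M)\xrightarrow{P\otimes\phi} P\otimes_\Lambda V=V'$, giving the required $\Lambda'$-isomorphism $\phi'$. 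This construction is functorial in $R$ (it commutes with $-\otimes_{R,\alpha}R'$ up to canonical isomorphism, again because everything in sight is built from $\otimes_k$ and the projective bimodules) and sends isomorphic lifts to isomorphic lifts, so it descends to a natural transformation $\Phi_R:\mathrm{Def}_\Lambda(V,R)\to\mathrm{Def}_{\Lambda'}(V',R)$, i.e.\ a morphism of functors $\hat F_V\to\hat F_{V'}$. The inverse natural transformation is built the same way from $R\otimes_k Q$, and the two compositions are naturally isomorphic to the identity because $(R\otimes_k Q)\otimes(R\otimes_k P)\cong R\otimes_k\Lambda$ acts as the identity on $M$ (and one must track that the attached isomorphisms $\phi$ are respected, which follows from naturality of the canonical isomorphisms). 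Therefore $\hat F_V$ and $\hat F_{V'}$ are naturally isomorphic functors on $\hat{\mathcal{C}}$.

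Finally, since $R(\Lambda,V)$ is a pro-representable hull of $F_V$ (Proposition \ref{prop:versal}) and $R(\Lambda',V')$ a pro-representable hull of $F_{V'}$, and a pro-representable hull of a functor is unique up to isomorphism in $\hat{\mathcal{C}}$ by Schlessinger's theory \cite[Def.~2.7]{Sch}, the natural isomorphism $F_V\cong F_{V'}$ (restrict the above to $\mathcal{C}$) forces $R(\Lambda,V)\cong R(\Lambda',V')$ in $\hat{\mathcal{C}}$. The main obstacle I anticipate is not any single hard step but the bookkeeping of the attached $\Lambda$-isomorphisms $\phi$: one must verify carefully that the canonical comparison isomorphisms (for $k\otimes_R(-)$, for $\otimes_{R,\alpha}R'$, and for the associativity/unit isomorphisms of the bimodule tensor products) are mutually compatible so that $\Phi$ genuinely respects the equivalence relation defining deformations and is genuinely natural in $R$; this is routine but must be done with care, and it is essentially all there is to the proof.
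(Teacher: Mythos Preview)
Your proposal is correct and follows essentially the same approach as the paper: extend the Morita bimodules $P,Q$ to $R\otimes_kP$, $R\otimes_kQ$, check these still induce a Morita equivalence between $R\otimes_k\Lambda$ and $R\otimes_k\Lambda'$, verify that tensoring with $R\otimes_kP$ carries lifts of $V$ to lifts of $V'$ (with $\phi'=P\otimes_\Lambda\phi$), and conclude that the deformation functors are naturally isomorphic, whence the versal deformation rings agree. The only cosmetic difference is that the paper first restricts to Artinian $R\in\mathcal{C}$ and then invokes continuity of $\hat F_V$ and $\hat F_{V'}$ to pass to all of $\hat{\mathcal{C}}$, whereas you work directly over general $R$; either route is fine.
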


\begin{proof}
Suppose $R\in\mathrm{Ob}(\mathcal{C})$ is Artinian, and define $R\Lambda=R\otimes_k\Lambda$
and $R\Lambda'=R\otimes_k\Lambda'$. Then $P_R=R\otimes_k P$ is finitely generated
projective as a left $R\Lambda'$-module and as a right $R\Lambda$-module, and 
$Q_R=R\otimes_k Q$ is finitely generated projective as a left $R\Lambda$-module and as a right 
$R\Lambda'$-module. We have
$$P_R\otimes_{R\Lambda} Q_R\cong R\otimes_k(P\otimes_\Lambda Q) \cong 
R\Lambda \quad\mbox{ as $R\Lambda$-$R\Lambda$-bimodules}$$
and
$$Q_R\otimes_{R\Lambda'} P_R\cong R\otimes_k(Q\otimes_{\Lambda'} P) \cong 
R\Lambda' \quad\mbox{ as  $R\Lambda'$-$R\Lambda'$-bimodules}.$$
In particular, $P_R\otimes_{R\Lambda}-$ and
$Q_R\otimes_{R\Lambda'}-$ give mutually inverse equivalences between the module 
categories of $R\Lambda$ and $R\Lambda'$. 

Let now $(M,\phi)$ be a lift of $V$ over $R$. Then $M$ is a finitely generated 
$R\Lambda$-module. Define $M'=P_R\otimes_{R\Lambda}M$. 
Since $P_R$ is a finitely generated projective right $R\Lambda$-module and since $M$ is a finitely generated free $R$-module, it follows that $M'$ is a finitely generated projective, and hence free, $R$-module. Moreover,
\begin{equation}
\label{eq:lala}
k\otimes_RM'=k\otimes_R(P_R\otimes_{R\Lambda}M) = 
P\otimes_{\Lambda}(k\otimes_RM)\xrightarrow{P\otimes_\Lambda\phi} P\otimes_{\Lambda}V =V'.
\end{equation}
This means that $(M',\phi')=(P_R\otimes_{R\Lambda}M,P\otimes_\Lambda\phi)$ is a lift of $V'$ over $R$. 
We therefore obtain for all $R\in\mathrm{Ob}(\mathcal{C})$ a well-defined map 
$$\tau_R:\mathrm{Def}_\Lambda(V,R)\to\mathrm{Def}_{\Lambda'}(V',R).$$
Since $P_R\otimes_{R\Lambda}-$ and
$Q_R\otimes_{R\Lambda'}-$ give mutually inverse equivalences between the module 
categories of $R\Lambda$ and $R\Lambda'$, it follows that $\tau_R$ is bijective.
It is straightforward to check that $\tau_R$ is natural with respect
to homomorphisms $\alpha:R\to R'$ in $\mathcal{C}$. 
Since the deformation functors $\hat{F}_V$ and $\hat{F}_{V'}$ are continuous, this implies that they 
are naturally isomorphic. Hence the versal deformation rings $R(\Lambda,V)$ and $R(\Lambda',V')$ 
are isomorphic in $\hat{\mathcal{C}}$.
\end{proof}

In \cite{bc}, it was proved that if $\Lambda$ is the  group algebra $kG$ of a finite
group $G$, then a finitely generated $\Lambda$-module $V$ has a universal deformation
ring if the stable endomorphism ring $\underline{\mathrm{End}}_{\Lambda}(V)$ is isomorphic to $k$.
Moreover, in this case $R(\Lambda,V)\cong R(\Lambda,\Omega(V))$, where $\Omega(V)$
is the first syzygy of $V$, i.e. $\Omega(V)$ is the kernel of a projective cover $P_V\to V$. 
We want to generalize this result to arbitrary Frobenius algebras.
Recall that a finite dimensional $k$-algebra $\Lambda$ is a Frobenius algebra if and only if there exists a non-degenerate associative
bilinear form $\theta:\Lambda\times\Lambda\to k$. This is equivalent to the statement that the
right $\Lambda$-modules $\Lambda_{\Lambda}$ and $({}_{\Lambda}\Lambda)^*=
\mathrm{Hom}_k({}_{\Lambda}\Lambda,k)$ are isomorphic. In particular, the group algebra
$kG$ is a Frobenius algebra. By \cite[Prop. 9.9]{CR}, every Frobenius algebra is 
self-injective. 

\begin{thm}
\label{thm:frobenius}
Let $\Lambda$ be a finite dimensional self-injective $k$-algebra,
and suppose $V$ is a finitely generated $\Lambda$-module whose stable endomorphism ring 
$\underline{\mathrm{End}}_{\Lambda}(V)$ is isomorphic to $k$.  
\begin{itemize}
\item[(i)] The deformation functor $\hat{F}_V$ is naturally isomorphic to the weak deformation functor
$\hat{F}^w_V$ from Remark $\ref{rem:weakdeformations}$.
\item[(ii)]  The module $V$ has  a universal deformation ring $R(\Lambda,V)$.
\item[(iii)] If $P$ is a finitely generated projective $\Lambda$-module, then
$\underline{\mathrm{End}}_{\Lambda}(V\oplus P)\cong k$ and $R(\Lambda,V)\cong R(\Lambda,V\oplus P)$.
\item[(iv)] If $\Lambda$ is also a Frobenius algebra, then 
$\underline{\mathrm{End}}_{\Lambda}(\Omega(V))\cong k$ and 
$R(\Lambda,V)\cong R(\Lambda,\Omega(V))$.
\end{itemize}
\end{thm}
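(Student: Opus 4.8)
The plan is to prove parts (i)–(iv) more or less in the order listed, since each feeds into the next. For part (i), the key point is that a weak deformation $[M]$ of $V$ over $R$ and a deformation $[M,\phi]$ carry the same information precisely when the isomorphism $\phi\colon k\otimes_R M\to V$ is unique up to automorphisms of $V$ that lift to automorphisms of $M$. I would first show, using that $\Lambda$ is self-injective and $\underline{\mathrm{End}}_\Lambda(V)\cong k$, that every $\Lambda$-automorphism of $V$ is, modulo those factoring through a projective, a nonzero scalar; then I would argue that a scalar automorphism $c\cdot\mathrm{id}_V$ always lifts to the $R$-automorphism $c\cdot\mathrm{id}_M$ of $M$ (here $c$ lifts to a unit of $R$ since $R$ is local with residue field $k$), while an automorphism of $V$ that differs from a scalar by a map through a projective can be adjusted because $M$ is $R$-free and projectives lift. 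Concretely, given two lifts $(M,\phi)$ and $(M,\phi')$ with the \emph{same} underlying module $M$, the composite $\phi'\circ\phi^{-1}$ is an automorphism of $V$, hence a scalar up to a projective-factoring map, and one shows this forces $(M,\phi)\cong(M,\phi')$ as deformations. This gives a well-defined inverse to the natural surjection $\hat F_V(R)\twoheadrightarrow\hat F^w_V(R)$, and naturality in $R$ is routine. The main obstacle in (i) is the bookkeeping needed to promote "automorphism of $V$ is a scalar modulo projectives" to an honest lifting statement for automorphisms of $M$; the self-injectivity of $\Lambda$ is what makes $\underline{\mathrm{End}}_\Lambda(V)$ behave well and lets projective summands be split off compatibly over $R$.

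Part (ii) is then almost immediate: by Proposition \ref{prop:versal} the functor $F_V$ has a pro-representable hull $R(\Lambda,V)$, and to upgrade "hull" to "represents $\hat F_V$" it suffices to verify Schlessinger's smoothness/injectivity condition on the relevant small extensions, i.e. that the map $\upsilon_R$ of Remark \ref{rem:newrem}(i) is injective for all $R$. Using part (i) I would instead work with $\hat F^w_V$, where injectivity is easier to see because one no longer has to track the framing $\phi$; alternatively, one checks directly that $\mathrm{Aut}$ of any lift acts as claimed so that Schlessinger's condition (H4) holds. Either way the conclusion is that $\hat F_V$ is representable, so $R(\Lambda,V)$ is a genuine universal deformation ring.

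For part (iii), the essential observation is that adding a projective summand $P$ does not change the stable category, so $\underline{\mathrm{End}}_\Lambda(V\oplus P)\cong\underline{\mathrm{End}}_\Lambda(V)\cong k$. To compare deformation rings I would show that lifting $V\oplus P$ over $R$ is the same as lifting $V$: any lift of $V\oplus P$ decomposes as (a lift of $V$) $\oplus$ (the free $R$-module $R\otimes_k P$), because $R\otimes_k P$ is projective over $R\otimes_k\Lambda$ and projective lifts are rigid (idempotents lift along the nilpotent kernel $m_R\otimes\Lambda$ in the Artinian case, then pass to the limit). This yields a natural isomorphism $\hat F_{V}\cong\hat F_{V\oplus P}$ and hence $R(\Lambda,V)\cong R(\Lambda,V\oplus P)$. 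Finally, for part (iv), with $\Lambda$ now Frobenius, $\Omega$ is an exact self-equivalence of the stable module category $\underline{\mathrm{mod}}\,\Lambda$, so $\underline{\mathrm{End}}_\Lambda(\Omega V)\cong\underline{\mathrm{End}}_\Lambda(V)\cong k$. The plan to get $R(\Lambda,V)\cong R(\Lambda,\Omega V)$ is to take a projective cover $0\to\Omega V\to P_V\to V\to 0$ and, given a lift $(M,\phi)$ of $V$ over $R$, form the lift of $\Omega V$ as the kernel of a lift $R\otimes_k P_V\to M$ of the projective cover; one checks this kernel is $R$-free (it is an $R$-submodule of a free module whose quotient is free, so $R$-projective, hence free as $R$ is local Noetherian — or argue dimension-counting over each $R/m_R^i$), that it is a lift of $\Omega V$, and that the construction is functorial in $R$ and bijective on deformation classes using part (iii) to absorb projective discrepancies. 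I expect the hardest step overall to be part (i) — specifically, showing that the framing $\phi$ carries no information beyond the isomorphism class of $M$ — since everything afterward is either a formal consequence of $\Omega$ being a stable self-equivalence or a standard "projectives lift uniquely" argument; the Frobenius hypothesis enters only in (iv), where one needs $\Omega$ to be an equivalence rather than merely an endofunctor.
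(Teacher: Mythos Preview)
Your outline matches the paper's approach closely: your argument for (i) is the paper's Claim~3 (built on Claims~1 and~2, which lift maps into and through projectives using that $\Lambda$-projectives are injective), your (ii) is Claim~5 (verification of Schlessinger's criterion (H4)), your (iii) is Claim~6, and your (iv) relates lifts of $V$ and $\Omega(V)$ via the projective-cover sequence, as the paper does.

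There is, however, one genuine misidentification in (iv). You write that the Frobenius hypothesis is needed so that $\Omega$ is a stable self-equivalence rather than merely an endofunctor, but $\Omega$ is already a self-equivalence of $\Lambda$-\underline{mod} for any self-injective $\Lambda$ --- the paper in fact uses this at the very start of the proof to get $\underline{\mathrm{End}}_\Lambda(\Omega(V))\cong k$, before Frobenius is ever invoked. The Frobenius hypothesis enters for a different, more technical reason (the paper's Claim~7): for Artinian $R$ in $\mathcal{C}$, the $R$-dual of a projective left $R\Lambda$-module is a projective right $R\Lambda$-module, which amounts to $R\Lambda$ being a Frobenius $R$-algebra, so that projective $R\Lambda$-modules are injective. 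Your kernel construction gives a well-defined map from deformations of $V$ to deformations of $\Omega(V)$, but to prove bijectivity you need the inverse: given a lift $U$ of $\Omega(V)$, lift the inclusion $\Omega(V)\hookrightarrow P_V$ to $\varphi\colon U\to\mathrm{Proj}_R(P_V)$ (Claim~1) and take the cokernel. Showing that $\varphi$ is injective with $R$-free cokernel is where Claim~7 is actually used; the phrase ``bijective on deformation classes using part (iii) to absorb projective discrepancies'' does not by itself close this gap, and if you try to carry it out you will find you need precisely that $\mathrm{Proj}_R(P_V)$ is injective over $R\Lambda$.
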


\begin{proof}
Suppose $\underline{\mathrm{End}}_{\Lambda}(V)\cong k$.  Then
$\underline{\mathrm{End}}_{\Lambda}(V\oplus P)\cong k$ for every finitely generated projective 
$\Lambda$-module $P$. Since $\Lambda$ is self-injective, $\Omega$ induces a self-equivalence
of the stable module category $\Lambda$-\underline{mod}. Hence also 
$\underline{\mathrm{End}}_{\Lambda}(\Omega(V))\cong k$.

We want to follow the basic outline of 
the proofs of \cite[Lemma 2.3 and
Props. 2.4 and 2.6]{bc}. 
Because the functor $\hat{F}_V$ (resp. $\hat{F}^w_V$) is continuous, most of the arguments can be 
carried out for the restriction $F_V$ (resp. $F^w_V$) of $\hat{F}_V$ (resp. $\hat{F}^w_V$) 
to the subcategory $\mathcal{C}$ of $\hat{\mathcal{C}}$ of Artinian objects.
If $R\in\mathrm{Ob}(\mathcal{C})$ is Artinian, we define $R\Lambda=R\otimes_k\Lambda$.

Let $\pi:R\to R_0$ be a surjection in $\mathcal{C}$, and let $Q_0$ be a finitely generated projective
$R_0\Lambda$-module. Since $R\Lambda$ is Artinian,
$Q_0$ has a projective $R\Lambda$-module cover $\mathrm{Proj}_R(Q_0)$, which is 
unique up to isomorphism. In particular, $R_0\otimes_{R,\pi} \mathrm{Proj}_R(Q_0)\cong Q_0$ as 
$R_0\Lambda$-modules. 
We first show in Claims 1 and 2 below
that  certain homomorphisms between $R_0\Lambda$-modules which factor through 
projective modules can be lifted to $R\Lambda$-module homomorphisms. 
Since $R,R_0$ are Artinian, we can use an inductive argument, assuming that $\pi:R\to R_0$ is a 
small extension, i.e. the kernel of $\pi$ is a principal ideal $tR$ annihilated by the maximal ideal
$m_R$ of $R$.

\medskip

\noindent\textit{Claim $1$.}
Let $\pi:R\to R_0$ be a surjection in $\mathcal{C}$.
Let $M$, $Q$ (resp. $M_0$, $Q_0$) be finitely generated $R\Lambda$-modules
(resp. $R_0\Lambda$-modules) and assume that $Q$ (resp. $Q_0$) is projective. 
Suppose there are
$R_0\Lambda$-module isomorphisms $g:R_0\otimes_{R,\pi} M \to M_0$,
$h:R_0\otimes_{R,\pi} Q\to Q_0$. 
If $\nu_0\in \mathrm{Hom}_{R_0\Lambda}(M_0, Q_0)$, then there
exists $\nu\in \mathrm{Hom}_{R\Lambda}(M, Q)$ with $\nu_0=
h\circ(R_0\otimes_{R,\pi}\nu)\circ g^{-1}$.

\medskip

\noindent
\textit{Proof of Claim $1$.}
As noted in the paragraph before the statement of Claim 1, we can assume that
$\pi:R\to R_0$ is a small extension whose kernel is $tR$.
Tensoring $0\to tR \to R \xrightarrow{\pi}R_0\to 0$ with $Q$ over $R$, we
obtain a short exact sequence of $R\Lambda$-modules
\begin{equation}
\label{eq:ses1}
0\to tQ \to Q \xrightarrow{h\circ  \tau_Q}Q_0\to 0
\end{equation}
where $\tau_Q:Q\to R_0\otimes_{R,\pi} Q$ is the natural surjection. Applying 
$\mathrm{Hom}_{R\Lambda}(M,-)$ to $(\ref{eq:ses1})$, we obtain a long exact
sequence
\begin{equation}
\label{eq:longseq}
\cdots \to \mathrm{Hom}_{R\Lambda}(M,Q)\xrightarrow{(h\circ\tau_Q)_*} 
\mathrm{Hom}_{R\Lambda}(M,Q_0)\to \mathrm{Ext}^1_{R\Lambda}(M,tQ)\to \cdots
\end{equation}
Because $tQ\cong k\otimes_R Q$ is a finitely generated projective
$\Lambda$-module, and hence an injective $\Lambda$-module, it follows that
$\mathrm{Ext}^1_{R\Lambda}(M,tQ)\cong \mathrm{Ext}^1_{\Lambda}(k\otimes_RM,tQ)=0$.
Thus the map $(h\circ\tau_Q)_*$ in $(\ref{eq:longseq})$ is surjective.
Since the natural surjection $\tau_M:M\to R_0\otimes_{R,\pi} M$ induces an isomorphism
$\mathrm{Hom}_{R_0\Lambda}(M_0,Q_0)\xrightarrow{(g\circ\tau_M)^*}
\mathrm{Hom}_{R\Lambda}(M,Q_0)$,
this implies that there exists an $R\Lambda$-module homomorphism
$\nu:M\to Q$ such that $\nu_0=h\circ(R_0\otimes_{R,\pi}\nu)
\circ g^{-1}$. This proves Claim 1.

\medskip

As noted in the paragraph before Claim 1, finitely generated projective 
$R_0\Lambda$-modules can be lifted to finitely generated projective 
$R\Lambda$-modules. We can use this together with Claim 1 to prove the following
result.

\medskip

\noindent\textit{Claim $2$.}
Let $\pi:R\to R_0$ be a surjection in $\mathcal{C}$.
Let $M$  (resp. $M_0$) be a finitely generated $R \Lambda$-module
(resp. $R_0 \Lambda$-module) such that there is an
$R_0\Lambda$-module isomorphism $g:R_0\otimes_{R,\pi} M \to M_0$.
Suppose $\sigma_0\in \mathrm{End}_{\Lambda}(M_0)$  factors through a projective 
$R_0 \Lambda$-module. Then there exists $\sigma
\in  \mathrm{End}_{R\Lambda}(M)$ such that $\sigma$ factors through a projective
$R\Lambda$-module and $\sigma_0=g\circ (R_0\otimes_{R,\pi}\sigma)\circ g^{-1}$.

\medskip

We next show in Claim 3 that if $(M,\phi)$ is a lift of $V$ over an object $R$ in
$\mathcal{C}$, then the deformation $[M,\phi]$ does not depend on the particular choice
of the $\Lambda$-module isomorphism. 
This  implies part (i) of Theorem \ref{thm:frobenius} since $\hat{F}_V$ and $\hat{F}^w_V$
are continuous. In other words,
the deformation functor $\hat{F}_V$ can be identified with the deformation functor
considered in \cite{bc}.

\medskip

\noindent
{\it Claim $3$.}
Let $R$ be an Artinian object in $\mathcal{C}$, and 
let $(M,\phi)$ and $(M',\phi')$ be two lifts of  $V$ over $R$. If there is an 
$R\Lambda$-module isomorphism $f:M\to M'$, then
there exists an $R\Lambda$-module isomorphism $f':M\to M'$ such that
$\phi'\circ(k\otimes_R f') = \phi$. In other words, $[M,\phi]=[M',\phi']$.

\medskip

\noindent{\it Proof of Claim $3$.}
Let $\overline{f}''=\phi\circ (k\otimes_R f)^{-1}\circ (\phi')^{-1}\in
\mathrm{End}_{\Lambda}(V)$. Since $\underline{\mathrm{End}}_{\Lambda}(V)=k$ by assumption,
there exists a scalar $\overline{s_f}\in k$ and
a homomorphism $\overline{\sigma}_f\in \mathrm{End}_{\Lambda}(V)$ which factors through a 
projective $\Lambda$-module such that $\overline{f}''=\overline{s_f}\cdot \mathrm{id}_V +
\overline{\sigma}_f$. 
By Claim 2,
there exists $\sigma_f
\in \mathrm{End}_{R\Lambda}(M)$ which factors through a projective 
$R\Lambda$-module such that $\overline{\sigma}_f=\phi\circ (k\otimes_R\sigma_f)
\circ \phi^{-1}$. Moreover, there exists $s_f\in R$ such that $k\otimes_R s_f= \overline{s_f}$.
Let $f''=s_f\cdot \mathrm{id}_M + \sigma_f$. Then $k\otimes_R f''=\phi^{-1}\circ \overline{f}''\circ\phi$  
is an automorphism of $k\otimes_RM$, which implies by Nakayama's Lemma that $f''$ is an 
automorphism of $M$. Define $f'=f\circ f''$. Then 
$\phi'\circ(k\otimes_R f') = \phi$, which proves Claim 3.

\medskip

We can now use Claims 2 and 3 together with analogous arguments to the ones used in the 
proof of \cite[Lemma 2.3]{bc} to prove the following result.

\medskip

\noindent
{\it Claim $4$.}
If $(M,\phi)$ is a lift of $V$ over an Artinian object $R$ in $\mathcal{C}$, then 
the ring homomorphism $\sigma_M:R\to\underline{\mathrm{End}}_{R\Lambda}(M)$
coming from the action of $R$ on $M$ via scalar multiplication is surjective.

\medskip

We next prove part (ii) of Theorem \ref{thm:frobenius}. Because of Proposition \ref{prop:versal},
it suffices to verify Schlessinger's criterion $(\mathrm{H}_4)$ in \cite[Thm. 2.11]{Sch}, which
is a consequence of the following result.

\medskip

\noindent
{\it Claim $5$.} Let $\pi: R'\to R$ be a small extension of Artinian objects in $\mathcal{C}$. 
Then the natural map $F_V(R'\times_R R') \to F_V(R')\times_{F_V(R)} F_V(R')$ is
injective.

\medskip

\noindent
\textit{Proof of Claim $5$.} Let $\alpha_1:R'\times_RR'\to R'$ (resp. $\alpha_2:R'\times_RR'\to R'$)
be the natural surjection onto the first (resp. second) component of $R'\times_RR'$ such that
$\pi\circ\alpha_1=\pi\circ\alpha_2$.
Let $(M_1,\phi_1)$ and $(M_2,\phi_2)$ be lifts of $V$ over $R'\times_RR'$.
Suppose there are $R'\Lambda$-module isomorphisms 
$f_i:R'\otimes_{R'\times_RR',\alpha_i} M_1 \to R'\otimes_{R'\times_RR',\alpha_i} M_2$ for $i=1,2$. Then 
$g_R=(R\otimes_{R',\pi}f_2)^{-1}\circ (R\otimes_{R',\pi}f_1)$ is an $R\Lambda$-module 
automorphism of  $R\otimes_{R'\times_RR',\pi\circ\alpha_2}M_1$. By Claims 2 and 4, it follows that there exists
an $R'\Lambda$-module automorphism $g$ of 
$R'\otimes_{R'\times_RR',\alpha_2}M_1$ with $R\otimes_{R',\pi}g=g_R$. Replacing $f_2$ by $f_2\circ g$,
we have $R\otimes_{R',\pi}f_1=R\otimes_{R',\pi}f_2$. Therefore, the pair $(f_1,f_2)$ defines an 
$(R'\times_RR')\Lambda$-module isomorphism $f:M_1\to M_2$. By Claim 3,
this implies that $[M_1,\phi_1]=[M_2,\phi_2]$, which proves Claim 5.

\medskip

For part (iii) of Theorem \ref{thm:frobenius}, we need the following result.

\medskip

\noindent
\textit{Claim $6$.}
Suppose $P$ is a finitely generated projective $\Lambda$-module and there is a commutative
diagram of finitely generated $R\Lambda$-modules
\begin{equation}
\label{eq:whatever}
\xymatrix{
0\ar[r]&\mathrm{Proj}_R(P)\ar[d]\ar[r]^{\alpha}&T\ar[d]\ar[r]^{\beta}& C\ar[d]\ar[r]&0\\
0\ar[r]&P\ar[r]^{\overline{\alpha}}&k\otimes_R T\ar[r]^{\overline{\beta}}&k\otimes_R C\ar[r]&0}
\end{equation}
in which $T$ and $C$ are free over $R$ and the bottom row arises by tensoring the top row with 
$k$ over $R$ and identifying $P$ with $k\otimes_R \mathrm{Proj}_R(P)$. 
Then the top row of $(\ref{eq:whatever})$ splits as a sequence of $R\Lambda$-modules.

\medskip

\noindent
\textit{Proof of Claim $6$.}
Since $\Lambda$ is self-injective, $P$ is an injective $\Lambda$-module.
Thus there exists a $\Lambda$-module homomorphism $\overline{\omega}:k\otimes_R T\to P$
with $\overline{\omega}\circ \overline{\alpha}=\mathrm{id}_P$. By Claim 1, there exists an
$R\Lambda$-module homomorphism $\omega:T\to \mathrm{Proj}_R(P)$ such that
$k\otimes_R\omega=\overline{\omega}$. Hence 
$k\otimes_R(\omega\circ\alpha) =\overline{\omega}\circ \overline{\alpha}=\mathrm{id}_P$.
Using Nakayama's Lemma, this implies that $\omega\circ\alpha$ is an 
$R\Lambda$-module automorphism of $\mathrm{Proj}_R(P)$. This proves Claim $6$.

\medskip

Part (iii) of Theorem \ref{thm:frobenius} follows now by using Claim 6 together with 
analogous arguments to the ones used in the proofs of \cite[Prop. 2.6 and Cor. 2.7]{bc}.

For the proof of part (iv) of Theorem \ref{thm:frobenius}, we assume that $\Lambda$ is a Frobenius
algebra. We need the following result.

\medskip

\noindent
{\it Claim $7$.}
Let $R$ be an Artinian object in $\mathcal{C}$, 
and let $Q$ be a finitely generated projective left $R\Lambda$-module. Then 
$Q^*=\mathrm{Hom}_{R\Lambda}(Q,R)$ is a projective right $R\Lambda$-module.

\medskip

\noindent
\textit{Proof of Claim $7$.}
Consider the additive map
\begin{eqnarray}
\label{eq:darnmap}
R\otimes_k\mathrm{Hom}_k({}_{\Lambda}\Lambda,k) &\to& 
\mathrm{Hom}_R({}_{R\otimes_k\Lambda}(R\otimes_k\Lambda),R)=
\mathrm{Hom}_R({}_{R\Lambda}(R\Lambda),R)\\
b\otimes f &\mapsto& (a\otimes \lambda \mapsto a b f(\lambda)).\nonumber
\end{eqnarray}
By \cite[Thm. 2.38]{CR}, $(\ref{eq:darnmap})$ is an $R$-module isomorphism. It is straightforward
to check that $(\ref{eq:darnmap})$ is in fact an isomorphism of right $R\Lambda$-modules.
Since $\Lambda$ is a Frobenius algebra, there is an isomorphism
$\Lambda_{\Lambda}\cong({}_{\Lambda}\Lambda)^*=\mathrm{Hom}_k({}_{\Lambda}\Lambda,k)$ 
of right $\Lambda$-modules. Therefore, 
$$\left({}_{R\Lambda}(R\Lambda)\right)^*=
\mathrm{Hom}_R({}_{R\Lambda}(R\Lambda),R)
\cong 
R\otimes_k({}_{\Lambda}\Lambda)^*
\cong 
R\otimes_k (\Lambda_{\Lambda}) \cong  (R\Lambda)_{R\Lambda}$$
as right $R\Lambda$-modules.
By assumption, $Q$ is a finitely generated projective left $R\Lambda$-module.
Hence there exists a left $R\Lambda$-module $T$ and an integer $n$ such that
$Q\oplus T\cong \left({}_{R\Lambda}(R\Lambda)\right)^n$. 
Since $Q$ and $T$ are free over $R$, they are $R\Lambda$-lattices. Thus 
we have 
$$Q^*\oplus T^*\cong \left(Q\oplus T\right)^*\cong
\left(\left({}_{R\Lambda}(R\Lambda)\right)^*\right)^n
\cong \left((R\Lambda)_{R\Lambda}\right)^n.$$
Hence $Q^*$ is a projective right $R\Lambda$-module, which proves Claim 7.

\medskip

Consider the short exact sequence $0\to \Omega(V)\xrightarrow{\overline{\varphi}} P_V\to V\to 0$
coming from the definition of $\Omega(V)$.
Let $(U,\psi)$ be a lift of $\Omega(V)$ over an Artinian object $R$ in $\mathcal{C}$. 
Since $\mathrm{Proj}_R(P_V)$  is a projective 
$R\Lambda$-module with $k\otimes_R \mathrm{Proj}_R(P_V)\cong P_V$,
we can use Claim 1 to lift $\overline{\varphi}$ 
to an $R\Lambda$-module homomorphism $\varphi:U\to \mathrm{Proj}_R(P_V)$.
Using this together with Claim 7, we can argue analogously to the proofs of
\cite[Prop. 2.4 and Cor. 2.5]{bc} to prove part (iv) of Theorem \ref{thm:frobenius}.

This completes the proof of Theorem \ref{thm:frobenius}.
\end{proof}


\section{A particular algebra of dihedral type}
\label{s:particular}
\setcounter{equation}{0}

For the remainder of this article,
let $k$ be an algebraically closed field of arbitrary characteristic and let $\Lambda_0$ be the basic
$k$-algebra $\Lambda_0=kQ/I$ where $Q$ and $I$ are as in Figure \ref{fig:algebra}.
The algebra $\Lambda_0$ is one of the algebras of dihedral type studied by Erdmann
in \cite{erd}. In particular, $\Lambda_0$ is symmetric, and hence a Frobenius algebra.
By \cite[Lemma IX.5.4]{erd}, $\Lambda_0$ is not Morita equivalent to a block of a group
algebra. 
We denote the irreducible $\Lambda_0$-modules by $S_0$, $S_1$ and $S_2$, or, using shorthand, 
by $0$, $1$ and $2$. The radical series of the projective indecomposable $\Lambda_0$-modules 
can be described by the following pictures:
\begin{equation}
\label{eq:proj}
P_0=\begin{array}{c@{\hspace*{.5ex}}c@{\hspace*{.5ex}}c}&0&\\
0&&\begin{array}{c}1\\2\end{array}\\&0&\end{array},\qquad 
P_1=\begin{array}{c@{\hspace*{.5ex}}c@{\hspace*{.5ex}}c}&1&\\ 1&&
\begin{array}{c}2\\0\end{array}\\&1&\end{array}, \qquad
P_2=\begin{array}{c@{\hspace*{.5ex}}c@{\hspace*{.5ex}}c}&2&\\ 2&&
\begin{array}{c}0\\1\end{array}\\&2&\end{array}.
\end{equation}

Since $\Lambda_0$ is a special biserial algebra, all indecomposable non-projective 
$\Lambda_0$-modules are either  string or band modules (see \cite{buri}).
We give an introduction to the representation theory
of $\Lambda_0$ in an appendix in \S \ref{s:prelimstring}, where we introduce in particular
words, strings, bands and canonical $k$-bases
for string and band modules. Moreover, we describe the components of the stable
Auslander-Reiten quiver using hooks and cohooks and
give a description of the homomorphisms between string and band
modules as determined in \cite{krau}.

Our goal in this section is to determine all string and band modules $V$ for $\Lambda_0$ whose
stable endomorphism rings are isomorphic to $k$ and to compute their universal
deformation rings $R(\Lambda_0,V)$. 
Because of the symmetries that both the quiver $Q$ and the relations $I$ satisfy, we
will make use of the following permutations of the vertices and arrows of $Q$.

\begin{dfn}
\label{def:bijections}
\begin{enumerate}
\item[(i)] For $u\in\{0,1,2\}$, define $\nu_u$ to be the following permutation of the vertices and arrows of $Q$: 
$$\nu_0=(1,2)(\rho,\xi)(\beta,\lambda),\quad
\nu_1=(0,2)(\alpha,\xi)(\beta,\delta),\quad\mbox{and}\quad
\nu_2=(0,1)(\alpha,\rho)(\lambda,\delta).$$
Note that $\nu_0$, $\nu_1$ and $\nu_2$ are not quiver automorphisms of $Q$.
If $S=w_1w_2\cdots w_n$ is a word of length $n\ge 1$ representing a string, let $u_S=e(S)$ and define
$$\nu_{u_S}(S)=\nu_{u_S}(w_1)^{-1}\nu_{u_S}(w_2)^{-1}\cdots \nu_{u_S}(w_n)^{-1}.$$
\item[(ii)] Let $\theta$ be the
quiver automorphism of $Q$ which acts on the vertices and arrows of $Q$ as the permutation
$$\theta=(0,1,2)(\alpha,\rho,\xi)(\beta,\delta,\lambda).$$
If $S=w_1w_2\cdots w_n$ is a word of length $n\ge 1$ representing a string, define
$$\theta(S)=\theta(w_1)\theta(w_2)\cdots\theta(w_n).$$
\end{enumerate}
\end{dfn}

The following words starting and ending at the vertex 0 will play a special role when discussing
both string and band modules.

\begin{dfn}
\label{def:important}
\begin{enumerate}
\item[(i)] Define $\underline{x}=\lambda\xi^{-1}\delta\beta\alpha^{-1}$ and
$\underline{y}=\lambda\delta\rho^{-1}\beta\alpha^{-1}$. 

\item[(ii)] We say a word $Z$ is a \emph{word of type $0$} if 
$Z=1_0$ or 
$$Z=\underline{z}_0 \underline{z}_1\cdots \underline{z}_{n-1}$$
where $n\ge 1$ and  
$\underline{z}_0,\ldots,\underline{z}_{n-1}\in\{\underline{x},\underline{y}\}$.
We call $n$ the \emph{type-$0$-length} of $Z$, where $n=0$ corresponds to $Z=1_0$.

\item[(iii)] Let $Z=\underline{z}_0\underline{z}_1 \cdots \underline{z}_{n-1}$ be a word of type $0$. 
A \emph{subpattern} $U$ of $Z$ is a subword
of $Z$ such that either $U=1_0$  or 
$U=\underline{z}_i \underline{z}_{i+1}\cdots  \underline{z}_j$ for some $0\le i\le  j\le n-1$.
The \emph{inverse pattern} $U^-$ of $U$ is defined to be $U^-=1_0$ if $U=1_0$ and
$U^-=\underline{z}_j \underline{z}_{j-1}\cdots\underline{z}_i$ if 
$U=\underline{z}_i \underline{z}_{i+1}\cdots  \underline{z}_j$ for some $0\le i\le  j\le n-1$.

\item[(iv)] Let $Z=\underline{z}_0 \cdots \underline{z}_{n-1}$ be a word of type $0$ of type-$0$-length 
$n\ge 1$. 
For $0\le i\le n-1$, we define the \emph{$i^{\mathrm{th}}$ rotation of $Z$ of type $0$} to be
$$\rho^{(0)}_i(Z)=\underline{z}_i\underline{z}_{i+1}\cdots \underline{z}_{n-1}\underline{z}_0\cdots 
\underline{z}_{i-1}.$$
Define $\sim_{r,(0)}$ to be the equivalence relation on all words of type $0$ of type-0-length at least 1
such that $Z\sim_{r,(0)}Z'$ if and only if $Z=\rho^{(0)}_i(Z')$ for some $i$.
\end{enumerate}
\end{dfn}


\subsection{Stable endomorphism rings and universal deformation rings of string modules for $\Lambda_0$}
\label{ss:udrstring}

In this section, we describe all string modules $M(S)$ for $\Lambda_0$ whose stable endomorphism rings are isomorphic 
to $k$ and determine their universal deformation rings.

The following result is an easy consequence of  \cite{krau} (see Remark \ref{rem:stringhoms})
and the symmetric shapes of the radical series of $P_0$, $P_1$ and $P_2$ as given in 
$(\ref{eq:proj})$.

\begin{lemma}
\label{lem:upsidedown}
Let $S=w_1w_2\cdots w_n$ be a word of length $n\ge 1$ representing a string and let $\nu_{u_S}(S)$ 
be as in Definition $\ref{def:bijections}(i)$.
Then $\nu_{u_S}(S)$ is a word representing a string. Moreover, 
\begin{eqnarray*}
\mathrm{dim}_k\,\underline{\mathrm{End}}_{\Lambda_0}(M(S)) &=&
\mathrm{dim}_k\,\underline{\mathrm{End}}_{\Lambda_0}(M(\nu_{u_S}(S))),\quad\mbox{and}\\
\mathrm{dim}_k\,\mathrm{Ext}^1_{\Lambda_0}(M(S),M(S)) &=&
\mathrm{dim}_k\,\mathrm{Ext}^1_{\Lambda_0}(M(\nu_{u_S}(S)),M(\nu_{u_S}(S))).
\end{eqnarray*}
\end{lemma}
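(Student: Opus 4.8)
The statement asserts that applying the vertex-and-arrow permutation $\nu_{u_S}$ to a string $S$ yields another string, and that the two resulting string modules have the same stable endomorphism dimension and the same self-extension dimension. The plan is to read off from the description of $\Lambda_0$ exactly why $\nu_{u_S}$ respects strings, and then to invoke the combinatorial classification of homomorphisms between string modules (from \cite{krau}, recalled in Remark \ref{rem:stringhoms}) and observe that $\nu_{u_S}$ sets up a dimension-preserving bijection between the relevant sets of "graph maps" (admissible pairs of quotient/sub substrings), where factoring through a projective translates into a combinatorially checkable condition that is likewise preserved.

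First I would verify that $\nu_{u_S}(S)$ is a string. By Definition \ref{def:bijections}(i) the operation reverses each letter and applies $\nu_{u_S}$ arrow-by-arrow, so one must check: (a) $\nu_u$ sends arrows to arrows and preserves the source/target incidences appropriately (after the reversal), so that consecutive letters of $\nu_{u_S}(S)$ still compose as a walk in $Q$; and (b) $\nu_{u_S}(S)$ avoids the relations $I$ and the "no $\gamma\gamma^{-1}$" condition in the definition of a string. Point (a) is immediate from the explicit permutations. For (b) one checks that each $\nu_u$, although not a quiver automorphism, permutes the set of length-$2$ paths that are zero in $\Lambda_0$ and the set of length-$2$ paths that are nonzero (equivalently, it is compatible with the symmetric shapes of $P_0,P_1,P_2$ in $(\ref{eq:proj})$): a length-$2$ subpath of a string is forbidden exactly when it lies in $I$, and the relations $\alpha\lambda,\lambda\xi,\xi\delta,\delta\rho,\rho\beta,\beta\alpha$ together with the socle relations are permuted among themselves by each $\nu_u$. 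Since $\nu_u$ is an involution on vertices and arrows, this is a finite check.

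Next, for the dimension equalities I would use the string-combinatorics description of $\mathrm{Hom}$ and $\mathrm{Ext}^1$ between string modules. A basis of $\underline{\mathrm{Hom}}_{\Lambda_0}(M(S),M(S'))$ is indexed by certain pairs of factorizations of $S$ and $S'$ through common substrings ("admissible triples"/graph maps), modulo those factoring through projective-injectives; and $\mathrm{Ext}^1_{\Lambda_0}(M(S),M(S))$ has a basis indexed by the arrow and overlap extensions built from $S$ with itself. The map $\nu_{u_S}$, being an incidence-preserving involution on arrows that is compatible with $I$ and with $\Omega$ (indeed it realizes the passage to $\nu_{u_S}(S)$ which, by the symmetric radical layers, matches the hook/cohook combinatorics), induces a bijection on substrings of $S$ and hence on the indexing sets of these bases, and it preserves the condition "factors through a projective" because it permutes the relations and therefore the projective-injective string modules $P_0,P_1,P_2$ among themselves. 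So both bases have the same cardinality after applying $\nu_{u_S}$.

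The main obstacle I expect is item (b) above together with the bookkeeping in the last step: since $\nu_u$ is explicitly \emph{not} a quiver automorphism (as the paper warns), one cannot simply transport the module category, and one must argue directly at the level of the string combinatorics — carefully matching the pieces of the Krause-type description (which substrings are "on top", which are "on bottom", and when a graph map factors through an indecomposable projective) under the letter-reversing map. I would organize this by tabulating, for each vertex $u$, how $\nu_u$ acts on the length-$\le 2$ paths and on the three projectives, and then checking that each combinatorial datum entering the formulas for $\underline{\mathrm{End}}$ and $\mathrm{Ext}^1$ is carried bijectively to the corresponding datum for $\nu_{u_S}(S)$; once this dictionary is set up, the two displayed equalities follow by simply counting basis elements on each side.
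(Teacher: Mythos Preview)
Your proposal is correct and follows essentially the same approach the paper indicates. The paper does not give a detailed proof of this lemma; it simply states that the result ``is an easy consequence of \cite{krau} (see Remark \ref{rem:stringhoms}) and the symmetric shapes of the radical series of $P_0$, $P_1$ and $P_2$ as given in $(\ref{eq:proj})$,'' which is precisely the combinatorial bijection on Krause's canonical homomorphisms, together with the observation that each $\nu_u$ respects the relations and the projectives, that you outline.
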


For the description of all the string modules whose stable endomorphism
rings are isomorphic to $k$, we need to introduce special strings, which we call strings of type 
$\beta\alpha^{-1}\lambda$ and which are defined using the words of type $0$ introduced
in Definition \ref{def:important}.

\begin{dfn}
\label{def:important1}
\begin{enumerate}
\item[(i)] We say a word $S$ is the \emph{standard representative of a string of type 
$\beta\alpha^{-1}\lambda$} if
$$S=\beta\alpha^{-1}Z\,\lambda$$
where $Z$ is a word of type $0$, as defined in Definition \ref{def:important}(ii).
A subword $U$ of $S$ is called a \emph{subpattern} of $S$ if $U$ is a subpattern of $Z$,
as defined in Definition \ref{def:important}(iii). The 
\emph{inverse pattern} $U^-$ of $U$ is also defined as in Definition \ref{def:important}(iii).
We define $S^-=\beta\alpha^{-1}Z^-\lambda$.

\item[(ii)] Define $\Xi$ to be the permutation $\Xi=(\underline{x},\underline{y})$. Define $\Xi(1_0)=1_0$.
If $Z=\underline{z}_0 \cdots \underline{z}_{n-1}$ is a word of type $0$ of type-$0$-length $n\ge 1$,
define $\Xi(Z)=\Xi(\underline{z}_0) \cdots \Xi(\underline{z}_{n-1})$. If 
$S=\beta\alpha^{-1}Z\lambda$ is the standard
representative of a string of type $\beta\alpha^{-1}\lambda$, define 
$\Xi(S)=\beta\alpha^{-1}\Xi(Z)\lambda$ and $\Xi(S^-)=\beta\alpha^{-1}\Xi(Z^-)\lambda$.
\end{enumerate}
\end{dfn}

\begin{rem}
\label{rem:string0nuA}
Let $S$ be the standard representative of a string of type $\beta\alpha^{-1}\lambda$,
and let $S^-$ and $\Xi$ be as in Definition \ref{def:important1}. 
Moreover, let $\nu_0,\nu_1,\nu_2$ and $\theta$ be as in Definition \ref{def:bijections}. 
Then $\theta(\nu_1(S))=\nu_2(\theta(S)) = \Xi(S^-)^{-1}$, and hence $\Xi(S^-)=\theta^2(\nu_2(S^{-1}))$.
Moreover, $\nu_2(S^{-1})=\left(\theta^2(\nu_1(S))\right)^{-1}$.
\end{rem}

\begin{lemma}
\label{lem:needthis}
Let $S=\beta\alpha^{-1}\underline{z}_0 \cdots \underline{z}_{n-1}\,\lambda$
be the standard representative of a string of type $\beta\alpha^{-1}\lambda$ as in 
Definition $\ref{def:important1}$. 
The stable endomorphism ring of $M(S)$, $\underline{\mathrm{End}}_{\Lambda_0}(M(S))$, 
is isomorphic to $k$ if and only if $S$ has the following property:
\begin{enumerate}
\item[(+)] 
If there are subpatterns $U,A_1,A_2$ of $S$ such that
$$S=\beta\alpha^{-1}\, U \,\underline{x}\,A_2\,\lambda \;\mbox{ or } \;
S=\beta\alpha^{-1}\,A_1\,\underline{x}\,U\,\underline{x}\,A_2\,\lambda \;\mbox{ or }\;
S=\beta\alpha^{-1}\,A_1\,\underline{x}\,U\,\lambda,$$ 
then  for all subpatterns $B_1,B_2$ of $S$, we have
$$S\neq \beta\alpha^{-1}\, U \,\underline{y}\,B_2\,\lambda\;\mbox{ and }\;
S\neq \beta\alpha^{-1}\,B_1\,\underline{y}\,U\,\underline{y}\, B_2\,\lambda \;\mbox{ and }\; 
S\neq  \beta\alpha^{-1}\,B_1\,\underline{y}\,U\,\lambda.$$ 
\end{enumerate}
Moreover, if the stable endomorphism ring of $M(S)$ has $k$-dimension at least $2$,
then all modules in the stable Auslander-Reiten component containing $M(S)$ have this
property.
\end{lemma}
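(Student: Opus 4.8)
\textbf{Plan for the proof of Lemma \ref{lem:needthis}.}

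The plan is to translate the question "is $\underline{\mathrm{End}}_{\Lambda_0}(M(S))\cong k$?" into a combinatorial statement about the string $S$ by using the explicit description of homomorphisms between string modules from \cite{krau} (recalled in Remark \ref{rem:stringhoms}). Recall that every $\Lambda_0$-homomorphism $M(S)\to M(S)$ is a $k$-linear combination of so-called \emph{graph maps}, each of which is determined by a factorization of $S$ through a common substring: informally, a graph map exists whenever $S$ admits a decomposition exhibiting a subword $W$ which occurs in two (possibly overlapping) positions, compatibly with the arrow orientations at the two ends of $W$. The identity map of $M(S)$ is the graph map coming from $W=S$ itself, and every \emph{other} graph map $M(S)\to M(S)$ either factors through a projective module or gives a nonzero element of $\underline{\mathrm{End}}_{\Lambda_0}(M(S))$ linearly independent from $\mathrm{id}$. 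So the first step is to set up this dictionary precisely: enumerate the possible graph maps $M(S)\to M(S)$ for $S$ of the special form $\beta\alpha^{-1}Z\lambda$, and identify which ones factor through projectives (using the radical-layer pictures $(\ref{eq:proj})$ of $P_0,P_1,P_2$, since a graph map factors through a projective precisely when the common substring $W$ can be "completed" on both sides inside a projective indecomposable).

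The second step is the heart of the argument: one shows that, because $S$ begins with $\beta\alpha^{-1}$, ends with $\lambda$, and $Z$ is a concatenation of the blocks $\underline{x}=\lambda\xi^{-1}\delta\beta\alpha^{-1}$ and $\underline{y}=\lambda\delta\rho^{-1}\beta\alpha^{-1}$, any graph map $M(S)\to M(S)$ other than a scalar multiple of the identity and maps factoring through projectives must arise from a coincidence between an $\underline{x}$-block and a $\underline{y}$-block in one of exactly the three positional configurations listed in $(+)$: either the common substring $W$ hangs off the left end (the first configuration), or off the right end (the third), or it is flanked by an $\underline{x}$ on one side in $S$ and a $\underline{y}$ on the corresponding side in the other copy (the middle configuration). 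Here one uses that $\underline{x}$ and $\underline{y}$ differ precisely in their middle letter ($\xi^{-1}\delta$ versus $\delta\rho^{-1}$), so a "mismatch" forcing a nontrivial graph map is exactly an $\underline{x}/\underline{y}$ discrepancy between two copies of a subpattern $U$. Thus condition $(+)$ is precisely the statement that no such nontrivial non-projective graph map exists, i.e. that $\underline{\mathrm{End}}_{\Lambda_0}(M(S))$ is spanned by $\mathrm{id}$ together with maps factoring through projectives — equivalently $\underline{\mathrm{End}}_{\Lambda_0}(M(S))\cong k$. I would organize this as: (a) $(+)$ fails $\Rightarrow$ exhibit an explicit non-scalar graph map not factoring through a projective $\Rightarrow$ $\dim_k\underline{\mathrm{End}}\ge 2$; (b) $(+)$ holds $\Rightarrow$ every graph map $\ne\lambda\cdot\mathrm{id}$ factors through a projective $\Rightarrow$ $\underline{\mathrm{End}}\cong k$. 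The symmetry operators $\nu_u$, $\theta$, $\Xi$ from Definitions \ref{def:bijections} and \ref{def:important1} (together with Remark \ref{rem:string0nuA} and Lemma \ref{lem:upsidedown}) can be used to cut down the case analysis, since they let us assume the "offending" block is $\underline{x}$ rather than $\underline{y}$ and normalize on which end of $S$ the coincidence occurs.

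For the final sentence — if $\dim_k\underline{\mathrm{End}}_{\Lambda_0}(M(S))\ge 2$ then every module in the AR-component of $M(S)$ has property $(+)$ (vacuously, meaning its stable endomorphism ring also has dimension $\ge 2$, or rather: fails to be $k$) — I would argue via the structure of the components consisting of string modules. By the combinatorial description recalled in \S\ref{s:prelimstring}, the neighbors of $M(S)$ in the stable AR-quiver are obtained from $S$ by adding or deleting hooks and cohooks, and the $\Omega$-translate corresponds to another explicit word operation. The point is that a nontrivial non-projective graph endomorphism of $M(S)$ is "stable" under these operations: adding a hook/cohook to $S$ extends every factorization witnessing a graph map to a factorization witnessing a graph map of the enlarged string, and this map still does not factor through a projective. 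Concretely, I would check that if $S$ violates $(+)$ then so does any $S'$ obtained from $S$ by a single irreducible-map step, hence by induction every string in the component violates $(+)$. The main obstacle I anticipate is precisely the bookkeeping in step two and in this last step: verifying that the graph map one constructs genuinely does not factor through a projective (this is where the specific relations $\alpha^2-\lambda\delta\beta$, $\rho^2-\beta\lambda\delta$, $\xi^2-\delta\beta\lambda$ and the three-layer structure of the $P_i$ enter, and where an $\underline{x}/\underline{y}$ mismatch must be shown to be "visible" rather than absorbed into a projective summand), and dually that under $(+)$ every non-scalar graph map \emph{does} factor through a projective. Getting the three positional cases in $(+)$ to exactly exhaust the possibilities — neither overcounting nor missing a configuration at the boundary letters $\beta\alpha^{-1}$ and $\lambda$ — is the delicate combinatorial core of the lemma.
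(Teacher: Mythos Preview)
Your plan is correct and follows essentially the same approach as the paper: use Krause's canonical homomorphisms (your ``graph maps'') to reduce the computation of $\underline{\mathrm{End}}_{\Lambda_0}(M(S))$ to combinatorics, observe that any non-identity canonical endomorphism factoring through a string module $M(T)$ with $T\not\sim_s S$ of length $\ge 2$ forces $T\sim_s\beta\alpha^{-1}U\lambda$ for a subpattern $U$ witnessing an $\underline{x}/\underline{y}$ mismatch as in the negation of $(+)$, then separately check that the short canonical endomorphisms (through $M(\delta)$ or a simple $M(1_u)$) factor through projectives unless $(+)$ already fails, and finally propagate the failure of $(+)$ along the component by showing it persists under adding hooks. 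The paper does not actually invoke the symmetry operators $\nu_u,\theta,\Xi$ inside this proof --- they are used elsewhere --- so you may find you do not need them here either.
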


\begin{proof}
Since $\underline{\mathrm{End}}_{\Lambda_0}(M(\beta\alpha^{-1}\lambda)) \cong k$ 
and $\beta\alpha^{-1}\lambda$ has property $(+)$, Lemma \ref{lem:needthis} follows for $n=0$.
For the remainder of the proof, let $S=\beta\alpha^{-1}\underline{z}_0 \cdots \underline{z}_{n-1}\,\lambda$
be a word representing a string of type $\beta\alpha^{-1}\lambda$ with $n\ge 1$. 
We analyze the different possibilities for
$M(S)$ such that $\underline{\mathrm{End}}_{\Lambda_0}(M(S))$ has $k$-dimension at least $2$.

Suppose first that there exists a string $T\not\sim_s S$ of length at least $2$ such that $M(S)$ has a 
canonical endomorphism $\alpha_T$, as defined in Remark \ref{rem:stringhoms}, 
factoring through $M(T)$. 
In particular,  $\underline{\mathrm{End}}_{\Lambda_0}(M(S))$ has $k$-dimension at least $2$. 
It follows from the definition of strings of type $\beta\alpha^{-1}\lambda$ that
$$T\sim_s\beta\alpha^{-1}\,U\,\lambda$$
for some subpattern $U$ of $S$
and that there
exist subpatterns $A_1,A_2,B_1,B_2$ of $S$ satisfying both $(\ast)$ and $(\ast\ast)$, where
\begin{itemize}
\item[$(\ast)$]
$S=\beta\alpha^{-1}\, U \,\underline{x}\,A_2\,\lambda$ or 
$S=\beta\alpha^{-1}\,A_1\,\underline{x}\,U\,\underline{x}\,A_2\,\lambda$ or
$S=\beta\alpha^{-1}\,A_1\,\underline{x}\,U\,\lambda$,
\item[$(\ast\ast)$]
$S =\beta\alpha^{-1}\, U \,\underline{y}\,B_2\,\lambda$ or 
$S = \beta\alpha^{-1}\,B_1\,\underline{y}\,U\,\underline{y}\, B_2\,\lambda$ or
$S = \beta\alpha^{-1}\,B_1\,\underline{y}\,U\,\lambda$. 
\end{itemize}
On the other hand, if $S$ satisfies $(\ast)$ and $(\ast\ast)$, then $M(S)$ has a 
canonical endomorphism factoring through a string module that is not isomorphic 
to $M(S)$ with at least three composition factors.
It is straightforward to see that then also the modules $M(S_{h\cdots h})$ and $M({}_{h\cdots h}S)$
each have a non-identity
canonical endomorphism factoring through a string module 
with at least three composition factors.

Next let $T$ be a string of length $0$ or $1$ such that $M(S)$ has a 
canonical endomorphism 
$\alpha_T$ factoring through $M(T)$. Then either $T=\delta$ or $T=1_u$ for some $u\in\{0,1,2\}$.
Analyzing these cases further, it follows that $\alpha_T$ does not factor through a projective 
$\Lambda_0$-module only if $S$ satisfies $(\ast)$ and $(\ast\ast)$ for certain subpatterns 
$U,A_1,A_2,B_1,B_2$ of $S$.
This completes the proof of Lemma \ref{lem:needthis}.
\end{proof}

\begin{rem}
\label{rem:string0nuB}
Let $S$ be the standard representative of a string of type $\beta\alpha^{-1}\lambda$,
and let $S^-$ and $\Xi$ be as in Definition \ref{def:important1}. 
If $S$ satisfies condition $(+)$ from Lemma \ref{lem:needthis}, then $S=S^-$. Moreover,
$S$ satisfies condition $(+)$ if and only if $\Xi(S)$ satisfies condition $(+)$.
\end{rem}

\begin{thm}
\label{thm:stablend}
Let $\mathfrak{C}$ be a component of the stable Auslander-Reiten quiver $\Gamma_s(\Lambda_0)$
consisting of string modules. Then $\mathfrak{C}$ contains a module whose stable endomorphism 
ring is isomorphic to $k$ if and only if either
\begin{enumerate}
\item[(i)] $\mathfrak{C}$ is a $3$-tube, or
\item[(ii)] $\mathfrak{C}$ contains a simple $\Lambda_0$-module, or
\item[(iii)] there exist $i\in\{0,1,2\}$ and 
a string of type $\beta\alpha^{-1}\lambda$ whose standard representative $S$ satisfies
condition $(+)$ from Lemma $\ref{lem:needthis}$ such that
$\mathfrak{C}$ contains $M(\theta^i(S))$. 
\end{enumerate}
\end{thm}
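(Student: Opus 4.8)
The plan is to classify the stable Auslander--Reiten components $\mathfrak{C}$ of $\Gamma_s(\Lambda_0)$ consisting of string modules by splitting into the cases determined by the shape of $\mathfrak{C}$: the two $3$-tubes, and the infinitely many components of type $\mathbb{Z}A_\infty^\infty$. For the $3$-tubes, one directly exhibits a module on the boundary whose stable endomorphism ring is $k$; this can be checked from the combinatorics of hooks and cohooks together with Krause's description of homomorphisms between string modules (see Remark~\ref{rem:stringhoms}), using the explicit radical pictures $(\ref{eq:proj})$ of the projectives. This settles direction ``$\Leftarrow$'' in case (i) and shows that case (i) does contribute.

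For the components of type $\mathbb{Z}A_\infty^\infty$, the key observation is that every such component is generated under the $\tau$- and $\Omega$-operations by the string modules on its central ``line'', and that these strings are obtained from a given one by repeatedly adding or deleting hooks and cohooks. First I would show that if $\mathfrak{C}$ contains \emph{any} module $M(S')$ with $\underline{\mathrm{End}}_{\Lambda_0}(M(S'))\cong k$, then $\mathfrak{C}$ contains a module of minimal length, and one identifies, by a direct string-combinatorial analysis, that such a minimal-length string is forced to be either a simple module (case (ii)) or $\theta^i$ applied to the standard representative $S=\beta\alpha^{-1}Z\lambda$ of a string of type $\beta\alpha^{-1}\lambda$ (case (iii)); here the symmetries $\nu_u$, $\theta$ of Definition~\ref{def:bijections}, together with Lemma~\ref{lem:upsidedown}, reduce the bookkeeping to strings that literally start with $\beta\alpha^{-1}$ and end with $\lambda$. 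For the converse in case (iii), Lemma~\ref{lem:needthis} already tells us precisely when $\underline{\mathrm{End}}_{\Lambda_0}(M(S))\cong k$, namely when $S$ has property $(+)$, so one only needs to note that applying the quiver automorphism $\theta$ is an auto-equivalence of $\Lambda_0\text{-}\underline{\mathrm{mod}}$ and hence preserves stable endomorphism rings, giving $\underline{\mathrm{End}}_{\Lambda_0}(M(\theta^i(S)))\cong\underline{\mathrm{End}}_{\Lambda_0}(M(S))\cong k$.

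The heart of the argument, and the step I expect to be the main obstacle, is the ``$\Rightarrow$'' direction: showing that if $\mathfrak{C}$ is a $\mathbb{Z}A_\infty^\infty$-component \emph{not} of the form described in (ii) or (iii), then \emph{no} module in $\mathfrak{C}$ has stable endomorphism ring $k$. The idea is to argue along the component: by the second paragraph of Lemma~\ref{lem:needthis}, if some module in the component has stable endomorphism ring of $k$-dimension $\ge 2$, then \emph{every} module in the component has property $(+)$ fails or — more usefully — one propagates the existence of a non-identity canonical endomorphism factoring through a string module with at least three composition factors from one module of the component to its neighbours $M(S_{h\cdots h})$ and $M({}_{h\cdots h}S)$, which is exactly the stability statement recorded at the end of the proof of Lemma~\ref{lem:needthis}. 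Thus within a fixed component the property ``$\underline{\mathrm{End}}_{\Lambda_0}(M(S))\cong k$'' is equivalent to ``$S$ has a representative satisfying $(+)$'', and one then checks that a string with a representative satisfying $(+)$ must, after normalising via $\nu_u$ and $\theta$, be the standard representative of a string of type $\beta\alpha^{-1}\lambda$ — unless the string has length $0$, i.e. the module is simple. Carrying out this normalisation rigorously — matching an arbitrary string whose module has small stable endomorphism ring against the templates $\beta\alpha^{-1}Z\lambda$ and tracking how $\nu_u$, $\theta$ act on the walks in $Q$ — is where the real work lies; the rest is formal once Lemmas~\ref{lem:upsidedown} and \ref{lem:needthis} and the $\theta$-symmetry are in hand.
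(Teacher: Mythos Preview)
Your overall plan matches the paper's: treat the $3$-tubes by direct inspection, and for a $\mathbb{Z}A_\infty^\infty$-component pick a string $T$ with $M(T)$ of minimal length, normalise via $\theta$ and $\nu_{u_T}$ so that the resulting word begins $\beta\alpha^{-1}\lambda\cdots$, and then invoke Lemma~\ref{lem:needthis}. The ``$\Leftarrow$'' direction and the use of $\theta$ as an auto-equivalence are fine.

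There is, however, a genuine gap in your ``$\Rightarrow$'' argument. You lean on the propagation statement at the end of Lemma~\ref{lem:needthis} to conclude that within a fixed component the condition $\underline{\mathrm{End}}_{\Lambda_0}(M(S))\cong k$ is equivalent to ``$S$ satisfies $(+)$'', and then try to deduce from $(+)$ that the string must be of type $\beta\alpha^{-1}\lambda$. This is circular: property $(+)$ is \emph{only defined} for standard representatives of strings of type $\beta\alpha^{-1}\lambda$, and the propagation clause in Lemma~\ref{lem:needthis} is stated and proved only under that hypothesis. It therefore cannot be used to rule out components whose minimal-length string, after normalisation to the form $\tilde{S}=\beta\alpha^{-1}\lambda\cdots$, is \emph{not} of the shape $\beta\alpha^{-1}Z\lambda$ with $Z$ a word of type~$0$.

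The paper closes this gap by a separate case analysis that you have not supplied. Write $\tilde{S}=\beta\alpha^{-1}ZW$ with $Z$ a word of type~$0$ of maximal type-$0$-length and $W$ the remaining suffix. Minimality of $M(\tilde{S})$ forces $W\neq 1_0$; if $W\neq\lambda$ then either $W=\lambda\delta\,W'$ or $W=\lambda\xi^{-1}W'$, and in each of these two cases one exhibits directly a non-identity canonical endomorphism (with at least three composition factors in its image) on every $M(\tilde{S}_{h\cdots h})$ and $M({}_{h\cdots h}\tilde{S})$, whence every module in that component has stable endomorphism ring of dimension $\ge 2$. Only after this step do you know that $\tilde{S}$ really is a standard representative of type $\beta\alpha^{-1}\lambda$, at which point Lemma~\ref{lem:needthis} (including its propagation clause, now legitimately applicable) yields condition $(+)$. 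You also need the small observation, via Remark~\ref{rem:string0nuA} and the map $\Xi$, that when the normalisation used $\nu_{u_T}$ (the $j=1$ case) one can still produce an $S$ of type $\beta\alpha^{-1}\lambda$ with $M(\theta^i(S))$ in $\mathfrak{C}$.
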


\begin{proof}
Since one of the 3-tubes contains $M(\alpha)$ and the other 3-tube contains $M(\delta\beta)$, it follows
that if $\mathfrak{C}$ is as in (i), (ii) or (iii), then $\mathfrak{C}$ contains a module whose stable 
endomorphism ring is isomorphic to $k$.

Conversely, let $\mathfrak{C}$ be a component of string modules containing a module whose stable 
endomorphism ring is isomorphic to $k$, and suppose $\mathfrak{C}$ is not a $3$-tube.
Then $\mathfrak{C}$ is of type $\mathbb{Z}A_\infty^\infty$. Let $T$ be a word representing a string 
such that $M(T)$ has minimal length in $\mathfrak{C}$.

If $T$ has length $0$, $1$ or $2$, 
it follows that $M(T)$ lies in the same component as a simple $\Lambda_0$-module,
and hence $M(T)$ is itself simple.

Now suppose $T$ is a string of length at least 3.
Let $u_T=e(T)$, let $\nu_{u_T}$ and $\theta$ be as in 
Definition $\ref{def:bijections}$, and let $i_T\in\{0,1,2\}$ be such that $\theta^{i_T}(u_T)=1$.
Then there exists $j\in\{0,1\}$ such that 
\begin{equation}
\label{eq:form}
\tilde{S}\stackrel{\mathrm{def}}{=}\theta^{i_T}(\nu_{u_T}^j(T))=\beta\alpha^{-1}\lambda \cdots .
\end{equation}
Suppose first that $\tilde{S}$ in (\ref{eq:form}) is not the standard representative of a string of type 
$\beta\alpha^{-1}\lambda$. 
Then there exists a word $Z=\underline{z}_0\cdots \underline{z}_{n-1}$ of type $0$ of maximal
type-$0$-length $n\ge 0$ such that
$\tilde{S}=\beta\alpha^{-1} Z\,W $
for some subword $W$ of $\tilde{S}$.
In particular, $W\neq \underline{x}\cdots$ and  $W\neq \underline{y} \cdots$. 
Because $M(T)$ is assumed to have minimal length in its stable Auslander-Reiten component,
the same is true for $M(\tilde{S})$ in its stable Auslander-Reiten component. Thus $W\neq 1_0$. 
Since we assume that $\tilde{S}$ is not the standard representative of a string of type 
$\beta\alpha^{-1}\lambda$, $W\neq \lambda$. Hence 
either (a) $W=\lambda\delta\, W'$, or (b) $W=\lambda\xi^{-1}\,W'$ for some subword $W'$ of $W$. 
Analyzing these cases further, it follows that
the stable endomorphism ring of every module in the stable Auslander-Reiten 
component containing $M(\tilde{S})$ has $k$-dimension at least $2$.
Hence it follows by Lemma \ref{lem:upsidedown} that this case cannot occur.

Therefore, $\tilde{S}$ in (\ref{eq:form}) must be the standard representative 
of a string of type $\beta\alpha^{-1}\lambda$.
If $j=0$, then $\theta^{-i_T}(\tilde{S})=T$ and we define $S=\tilde{S}$. If $j=1$, then by Remark
\ref{rem:string0nuA},
$$\theta^{-i_T-1}(\Xi( \tilde{S}^-))=\theta^{-i_T-1}(\theta^2(\nu_2( \tilde{S}^{-1})))=
\theta^{-i_T+1}\left((\theta^2(\nu_1( \tilde{S})))^{-1}\right) = \left(\theta^{-i_T}(\nu_1( \tilde{S})\right)^{-1}
=T^{-1}$$
and we define $S=\Xi(\tilde{S}^-)$. By Definition \ref{def:important1}, $\Xi(\tilde{S}^-)$
is also the standard representative of a string of type $\beta\alpha^{-1}\lambda$. 
Hence in both cases $j=0$ and $j=1$, there exists $i\in\{0,1,2\}$ such that $\mathfrak{C}$ contains 
$M(\theta^i(S))$, where $S$ is the standard representative of a string of type $\beta\alpha^{-1}\lambda$. 
Since we have assumed that  $\mathfrak{C}$ contains a module whose stable endomorphism ring
is isomorphic to $k$, it follows by Lemma \ref{lem:needthis} that $S$ satisfies condition $(+)$.
This completes the proof of Theorem \ref{thm:stablend}.
\end{proof}

Because of the prominent role played by standard representatives of strings $S$ of type 
$\beta\alpha^{-1}\lambda$ in Theorem \ref{thm:stablend}, the question arises 
which such $S$ satisfy condition $(+)$ from Lemma \ref{lem:needthis}. We will give a precise
answer to this question in Proposition \ref{prop:string0+}.

We now look at the components of the stable Auslander-Reiten quiver of $\Lambda_0$ 
singled out by Theorem \ref{thm:stablend}.

\begin{prop}
\label{prop:simples}
For $i\in\{0,1,2\}$, let $\mathfrak{C}_i$ be the component of the stable Auslander-Reiten quiver of 
$\Lambda_0$ containing the simple $\Lambda_0$-module $S_i=M(1_i)$.
Define 
$(1_i)_h=\theta^i(\lambda\xi^{-1})$ and $(1_i)_{hh}=\theta^i(\lambda\xi^{-1}\delta\rho^{-1})$.
The component $\mathfrak{C}_i$ is stable under $\Omega$, and
the modules in $\mathfrak{C}_i$ whose stable endomorphism rings are isomorphic to $k$
are precisely the modules in the $\Omega$-orbits of the modules
$M(1_i)$, $M((1_i)_h)$ and $M((1_i)_{hh})$.
Their universal deformation rings are
$$R(\Lambda_0, M(1_i)) \cong k[[t]]/(t^2),\qquad R(\Lambda_0, M((1_i)_h))\cong k,
\qquad R(\Lambda_0, M((1_i)_{hh}))\cong k[[t]].$$
\end{prop}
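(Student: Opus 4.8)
The plan is to compute everything explicitly from the combinatorial (string) description of the three modules and to use Theorem~\ref{thm:frobenius}(iv) to reduce the infinitely many modules in each $\Omega$-orbit to a single representative. By the quiver automorphism $\theta$ of Definition~\ref{def:bijections}(ii), it suffices to treat the case $i=0$: indeed $\theta$ induces a self-equivalence of $\Lambda_0$-$\underline{\mathrm{mod}}$ commuting with $\Omega$, sending $M(1_0)$ to $M(1_i)$, $M((1_0)_h)$ to $M((1_i)_h)$, and $M((1_0)_{hh})$ to $M((1_i)_{hh})$, so it preserves stable endomorphism rings, $\mathrm{Ext}^1$-groups, and (by Proposition~\ref{prop:morita}-type functoriality, or directly) versal deformation rings. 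So I fix $i=0$ and write $S_0=M(1_0)$, $N_1=M(\lambda\xi^{-1})$, $N_2=M(\lambda\xi^{-1}\delta\rho^{-1})$.

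First I would identify the component $\mathfrak{C}_0$. Starting from the simple $S_0=M(1_0)$, one computes $\Omega(S_0)$ from the projective cover $P_0\twoheadrightarrow S_0$ using the radical picture of $P_0$ in (\ref{eq:proj}); this gives a string module, and one checks that the hook/cohook operations described in the appendix (\S\ref{s:prelimstring}) generate a component of type $\mathbb{Z}A_\infty^\infty$. One also checks $\Omega$ is an automorphism of $\mathfrak{C}_0$ (since $\Lambda_0$ is symmetric, $\Omega\cong\Omega^{-1}$ up to the Nakayama functor which here is trivial, so $\Omega$ permutes the components and fixes $\mathfrak{C}_0$ because it fixes the $\Omega$-orbit of $S_0$). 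Then I would locate $N_1=M((1_0)_h)$ and $N_2=M((1_0)_{hh})$ in $\mathfrak{C}_0$ by expressing them via hooks applied to the string $1_0$, confirming they are a successor and a second successor of $S_0$ as in Theorem~\ref{thm:main}(ii). Next, using Lemma~\ref{lem:needthis} (or a direct application of the description of homomorphisms between string modules from \cite{krau}, cf.\ Remark~\ref{rem:stringhoms}) one shows that $\underline{\mathrm{End}}_{\Lambda_0}$ of each of $S_0$, $N_1$, $N_2$ is isomorphic to $k$, and that no other module of $\mathfrak{C}_0$ (outside the three $\Omega$-orbits) has this property — the latter following from Theorem~\ref{thm:stablend} together with the fact that these strings have length $\le 3$, so any longer string in $\mathfrak{C}_0$ either is one of $\theta^i(S)$ for $S$ of type $\beta\alpha^{-1}\lambda$ failing $(+)$, or is handled by the minimal-length analysis in the proof of Theorem~\ref{thm:stablend}.

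It remains to compute the three deformation rings. By Proposition~\ref{prop:versal}, $\dim_k\mathrm{Ext}^1_{\Lambda_0}(V,V)$ is the tangent-space dimension, hence an upper bound on the number of generators of $R(\Lambda_0,V)$; I would compute this dimension for each of $S_0$, $N_1$, $N_2$ using the combinatorial formula for $\mathrm{Ext}^1$ between string modules (one-dimensional overlap/arrow-module contributions). I expect $\dim_k\mathrm{Ext}^1=1$ for $S_0$ and for $N_2$, and $\dim_k\mathrm{Ext}^1=0$ for $N_1$; the latter immediately gives $R(\Lambda_0,N_1)\cong k$. For $N_2$ with one-dimensional tangent space, $R(\Lambda_0,N_2)$ is a quotient of $k[[t]]$; to show it equals $k[[t]]$ one exhibits an explicit lift of $N_2$ over $k[[t]]$ that is not pulled back from any proper Artinian quotient — concretely, a free $k[[t]]$-module with a $\Lambda_0$-action deforming the string relations, for instance letting the relevant overlapping arrow act with an entry involving $t$, and checking by the versality map $\upsilon_R$ (Remark~\ref{rem:newrem}(i)) that the induced map $R(\Lambda_0,N_2)\to k[[t]]$ is surjective, hence an isomorphism since the source is a quotient of $k[[t]]$. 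For $S_0$ with one-dimensional tangent space, $R(\Lambda_0,S_0)$ is a quotient of $k[[t]]$, and the claim is that it is $k[[t]]/(t^2)$; here I would show there is a nontrivial lift over $k[[\epsilon]]/(\epsilon^2)$ (so $R\neq k$) but that there is an obstruction in degree $2$ — equivalently, that the universal deformation cannot be lifted to $k[[t]]/(t^3)$. The cleanest route is to use syzygy-invariance: by Theorem~\ref{thm:frobenius}(iv), $R(\Lambda_0,S_0)\cong R(\Lambda_0,\Omega^j(S_0))$ for all $j$, and one of the modules $\Omega^j(S_0)$ is a string module small enough that the obstruction calculation (a Massey-product / cup-product computation in $\mathrm{Ext}^*_{\Lambda_0}$, or a direct check that every lift over $k[[t]]/(t^3)$ is isomorphic to the pullback of one over $k[[t]]/(t^2)$) is transparent; alternatively one computes $\mathrm{Ext}^2$ and the quadratic part of the obstruction map directly.

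The main obstacle is pinning down the deformation ring of $S_0$ precisely as $k[[t]]/(t^2)$ rather than merely $k[[t]]/(t^n)$ for some $n\ge 2$ or $k[[t]]$: this requires showing the obstruction to lifting past the dual numbers is \emph{nonzero}, which is a genuine second-order computation and cannot be read off from tangent-space dimensions alone. I expect to carry this out by writing down the universal deformation over $k[\epsilon]$ explicitly on the canonical $k$-basis of the string module, attempting to extend the $\Lambda_0$-relations over $k[t]/(t^3)$, and exhibiting the relation (one of $\alpha^2-\lambda\delta\beta$, $\rho^2-\beta\lambda\delta$, $\xi^2-\delta\beta\lambda$, or one of the monomial relations in $I$) whose failure forces $t^2=0$; then versality gives $R(\Lambda_0,S_0)=k[[t]]/(t^2)$.
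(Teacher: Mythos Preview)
Your overall strategy matches the paper's: reduce to $i=0$ via $\theta$, describe $\mathfrak{C}_0$ explicitly by hooks and cohooks, compute $\underline{\mathrm{End}}$ and $\mathrm{Ext}^1$ for the three representatives using Krause's description, and then determine the three deformation rings from the tangent-space dimension plus an explicit lift or obstruction. Two points of execution differ from the paper and are worth sharpening.

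First, your justification that no module of $\mathfrak{C}_0$ outside the three $\Omega$-orbits has stable endomorphism ring $k$ is too indirect: Theorem~\ref{thm:stablend} only tells you which \emph{components} contain such a module, not which modules inside $\mathfrak{C}_0$ do. The paper instead parametrises all $\Omega$-orbits in $\mathfrak{C}_0$ by the strings $(\lambda\xi^{-1}\delta\rho^{-1}\beta\alpha^{-1})^n$, $(\lambda\xi^{-1}\delta\rho^{-1}\beta\alpha^{-1})^n\lambda\xi^{-1}$, $(\lambda\xi^{-1}\delta\rho^{-1}\beta\alpha^{-1})^n\lambda\xi^{-1}\delta\rho^{-1}$ for $n\ge 0$, and observes directly that for $n\ge 1$ each of these admits a nonzero endomorphism factoring through $M(1_0)$ that does not factor through a projective.

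Second, for $R(\Lambda_0,S_0)\cong k[[t]]/(t^2)$ the paper's argument is cleaner than the alternatives you list. The nontrivial lift over $k[\epsilon]$ is the string module $M(\alpha)$. If there were a lift $M_0$ over $k[[t]]/(t^3)$, one gets a short exact sequence $0\to S_0\to M_0\to M(\alpha)\to 0$ of $\Lambda_0$-modules; the key observation is that $\mathrm{Ext}^1_{\Lambda_0}(M(\alpha),S_0)=0$, so this splits as $\Lambda_0$-modules, and then an elementary analysis of the possible $t$-action on $S_0\oplus M(\alpha)$ forces $t^2M_0=0$, a contradiction. Your relation-checking proposal also works here (on a rank-one lift the relation $\alpha^2=\lambda\delta\beta$ becomes $\alpha^2=0$, so if $\alpha$ acts by $ct+dt^2$ then $c^2t^2=0$ forces $c=0$), but the syzygy-shift idea does not help: every $\Omega^j(S_0)$ presents the same obstruction problem, so no syzygy makes the computation more transparent than $S_0$ itself.
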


\begin{proof}
It suffices to consider $i=0$. Using hooks and cohooks (see Definition \ref{def:arcomps}), we see that 
$\mathfrak{C}_0$ is stable under $\Omega$ and that all $\Lambda_0$-modules
in $\mathfrak{C}_0$ lie in the $\Omega$-orbit of either 
\begin{eqnarray*}
A_{n,0}&=&M\left((\lambda\xi^{-1}\delta\rho^{-1}\beta\alpha^{-1})^n\right),\mbox{ or}\\
A_{n,1}&=&M\left((\lambda\xi^{-1}\delta\rho^{-1}\beta\alpha^{-1})^n\lambda\xi^{-1}\right),\mbox{ or}\\
A_{n,2}&=&M\left((\lambda\xi^{-1}\delta\rho^{-1}\beta\alpha^{-1})^n\lambda\xi^{-1}\delta\rho^{-1}\right)
\end{eqnarray*}
for some $n\ge 0$. Note that $M(1_0)=A_{0,0}$, $M((1_0)_h)=A_{0,1}$ and $M((1_0)_{hh})=A_{0,2}$.
Using Remark \ref{rem:stringhoms} and the description of the projective indecomposable
$\Lambda_0$-modules in (\ref{eq:proj}), it is straightforward to show that the stable endomorphism ring of
$A_{0,j}$ is isomorphic to $k$ for $j\in\{0,1,2\}$ and that
$\mathrm{Ext}^1_{\Lambda_0}(A_{0,j},A_{0,j})$ is isomorphic to $k$ for $j\in\{0,2\}$ and
zero for $j=1$. On the other hand, for $n\ge 1$, $A_{n,j}$ has a non-zero endomorphism which 
factors through $M(1_0)$ and which does not factor through a projective $\Lambda_0$-module. 

Since $\mathrm{Ext}^1_{\Lambda_0}(A_{0,1},A_{0,1})=0$, it follows that 
$R(\Lambda_0,A_{0,1})\cong k$.
Since $\mathrm{Ext}^1_{\Lambda_0}(A_{0,j},A_{0,j})$ is isomorphic to $k$ for $j\in\{0,2\}$, it
follows that $R(\Lambda_0,A_{0,j})$ is a quotient of $k[[t]]$ for $j\in\{0,2\}$.

The module $A_{0,0}$ lies in a non-split short exact sequence of $\Lambda_0$-modules
\begin{equation}
\label{eq:step1}
0\to A_{0,0} \to M(\alpha) \to A_{0,0}\to 0.
\end{equation}
Therefore, $M(\alpha)$ defines a non-trivial lift of $A_{0,0}$ over $k[[t]]/(t^2)$. This implies that there
is a unique surjective $k$-algebra homomorphism $\psi:R(\Lambda_0,A_{0,0})\to k[[t]]/(t^2)$ in 
$\hat{\mathcal{C}}$ corresponding to the deformation 
defined by $M(\alpha)$. We need to show that $\psi$ is an isomorphism.
Suppose this is false. Then there exists a surjective $k$-algebra homomorphism
$\psi_0:R(\Lambda_0,A_{0,0})\to k[[t]]/(t^3)$ in $\hat{\mathcal{C}}$
such that $\pi\circ\psi_0=\psi$ where $\pi:k[[t]]/(t^3)\to
k[[t]]/(t^2)$ is the natural projection. Let $M_0$ be a $k[[t]]/(t^3)\otimes_k\Lambda_0$-module
which defines a lift of $A_{0,0}$ over $k[[t]]/(t^3)$ corresponding to $\psi_0$.
Because $M_0/t^2M_0\cong M(\alpha)$ and $t^2M_0\cong A_{0,0}$, we obtain 
a non-split short exact sequence of $k[[t]]/(t^3)\otimes_k\Lambda_0$-modules
\begin{equation}
\label{eq:step2}
0\to A_{0,0} \to M_0\to M(\alpha)\to 0.
\end{equation}
Since $\mathrm{Ext}^1_{\Lambda_0}(M(\alpha),A_{0,0})=0$, this sequence splits as a sequence
of $\Lambda_0$-modules. Hence $M_0=A_{0,0}\oplus M(\alpha)$ as $\Lambda_0$-modules.
Writing elements of $M_0$ as $(a,m)$ where $a\in A_{0,0}$ and $m\in M(\alpha)$, the $t$-action
on $M_0$ 
is given as $t\,(a,m)=(\sigma(m),t\, m)$, where $\sigma:M(\alpha)\to A_{0,0}$ is a 
surjective $\Lambda_0$-module homomorphism. Since for all such $\sigma$ we have
$\sigma(t\, m) = 0 = t^2\, m$, it follows that $t^2\, (a,m)=(\sigma(t\,m),t^2\,m)=(0,0)$ for all
$a\in A_{0,0}$ and $m\in M(\alpha)$. 
But this is a contradiction to $t^2M_0\cong A_{0,0}$. Thus $\psi$ is a $k$-algebra isomorphism and
$R(\Lambda_0,A_{0,0})\cong k[[t]]/(t^2)$.

Let $\underline{a}=\lambda\xi^{-1}\delta\rho^{-1}$ and consider $A_{0,2}=M(\underline{a})$. 
Then $A_{0,2}$ lies in  a non-split short exact sequence of $\Lambda_0$-modules
\begin{equation}
\label{eq:step41}
0\to A_{0,2}\to M(\underline{a}\beta\underline{a}) \to A_{0,2}\to 0.
\end{equation}
Therefore, $M(\underline{a}\beta\underline{a})$ defines a non-trivial lift of $A_{0,2}$ over
$k[[t]]/(t^2)$. Let $\{b_0,b_1,\ldots,b_4\}$ be a canonical $k$-basis of $A_{0,2}$ relative to the
representative $\underline{a}$, and let for each arrow $\zeta\in\{\alpha,\rho,\xi,\beta,\delta,\lambda\}$,
$X_\zeta$ be the $5\times 5$ matrix with entries in $k$ describing the action of $\zeta$ on $A_{0,2}$ 
with respect to $\{b_0,b_1,\ldots,b_4\}$  (see Definition \ref{def:strings}). 
Let $L_{0,2}$ be a free $k[[t]]$-module of rank $5$, and let
$\{B_0,B_1,\ldots,B_4\}$ be a basis of $L_{0,2}$ over $k[[t]]$. Viewing $k$ as a subalgebra of
$k[[t]]$, define a $\Lambda_0$-module structure on $L_{0,2}$
by letting $\zeta\in \{\alpha,\rho,\xi,\delta,\lambda\}$ act as the matrix $X_\zeta$ and
$\beta$ as the matrix $X_\beta + t E_{4,0}$ with respect to the basis $\{B_0,B_1,\ldots,B_4\}$.
Here $E_{4,0}$ is the elementary $5\times 5$ matrix which sends $B_0$ to $B_4$ and
all other basis elements to $0$. Then $L_{0,2}$ is a $k[[t]]\otimes_k\Lambda_0$-module
which is free over $k[[t]]$ and $L_{0,2}/tL_{0,2}\cong A_{0,2}$. Hence $L_{0,2}$ defines a
lift of $A_{0,2}$ over $k[[t]]$, and there exists a unique $k$-algebra homomorphism
$\varphi:R(\Lambda_0,A_{0,2})\to k[[t]]$ in $\hat{\mathcal{C}}$ corresponding to the deformation
defined by $L_{0,2}$. Because
$L_{0,2}/t^2L_{0,2}\cong M(\underline{a}\beta\underline{a})$ as $\Lambda_0$-modules, 
$L_{0,2}/t^2L_{0,2}$ defines a non-trivial lift of $A_{0,2}$ over $k[[t]]/(t^2)$. Thus 
$\varphi$ is surjective. Since 
$R(\Lambda_0,A_{0,2})$ is a quotient of $k[[t]]$,
this implies that $\varphi$ is an isomorphism, and hence $R(\Lambda_0,A_{0,2})\cong k[[t]]$.
This proves Proposition \ref{prop:simples}.
\end{proof}

\begin{prop}
\label{prop:stringtype0}
Let $S$ be the standard representative of a string of type 
$\beta\alpha^{-1}\lambda$ satisfying condition $(+)$ from 
Lemma $\ref{lem:needthis}$. Then 
${}_hS=\lambda\delta\rho^{-1}S$ and ${}_{hh}S=\delta\beta\alpha^{-1}({}_hS)$.
For $i\in\{0,1,2\}$, let $\mathfrak{C}_{S,i}$ be the component of the stable
Auslander-Reiten quiver of $\Lambda_0$ containing $M(\theta^i(S))$.
The component $\mathfrak{C}_{S,i}$ is stable under $\Omega$, and
the modules in $\mathfrak{C}_{S,i}$ whose stable endomorphism rings are isomorphic to $k$
are precisely the modules in the $\Omega$-orbits of the modules
$M(\theta^i(S))$, $M(\theta^i({}_hS))$ and $M(\theta^i({}_{hh}S))$.
Their universal deformation rings are
$$R(\Lambda_0, M(\theta^i(S))) \cong k[[t]]/(t^2),\qquad R(\Lambda_0, M(\theta^i({}_hS))) \cong k,
\qquad R(\Lambda_0, M(\theta^i({}_{hh}S))) \cong k[[t]].$$
\end{prop}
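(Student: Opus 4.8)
The plan is to adapt, essentially verbatim, the argument used to prove Proposition \ref{prop:simples}, with the simple module $M(1_0)$ and the periodic string $\lambda\xi^{-1}\delta\rho^{-1}\beta\alpha^{-1}$ replaced by the string module $M(S)$ and a suitable periodic extension of $S$. As in that proof, I would first reduce to $i=0$: the quiver automorphism $\theta$ of Definition \ref{def:bijections}(ii) preserves the ideal $I$, hence induces a self-equivalence of $\Lambda_0$-$\mathrm{mod}$ which commutes with $\Omega$, permutes the components of $\Gamma_s(\Lambda_0)$, preserves stable endomorphism rings, and (being of Morita type, cf. Proposition \ref{prop:morita}) preserves versal deformation rings; so it suffices to treat $\mathfrak C_S:=\mathfrak C_{S,0}$. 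The formulas ${}_hS=\lambda\delta\rho^{-1}S$ and ${}_{hh}S=\delta\beta\alpha^{-1}({}_hS)$ then follow by directly unwinding the hook-addition procedure from Definition \ref{def:arcomps}, using that the standard representative $S$ of a string of type $\beta\alpha^{-1}\lambda$ begins with $\beta\alpha^{-1}\lambda$ on the left and using the radical series of the projectives in $(\ref{eq:proj})$.

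Next I would describe the whole component $\mathfrak C_S$. Since $M(S)$ has $k$-dimension at least $4$, $\mathfrak C_S$ is not one of the two $3$-tubes, so by the classification of string components it is of type $\mathbb ZA_\infty^\infty$. Using hooks and cohooks, one checks that $\mathfrak C_S$ is stable under $\Omega$ and that every module of $\mathfrak C_S$ lies in the $\Omega$-orbit of one of the string modules $B_{n,0},B_{n,1},B_{n,2}$ for $n\ge 0$, where $B_{0,0}=M(S)$, $B_{0,1}=M({}_hS)$, $B_{0,2}=M({}_{hh}S)$, and $B_{n,j}$ for $n\ge1$ is obtained from $B_{0,j}$ by inserting $n$ copies of a fixed periodic word built from $\underline x$ and $\underline y$. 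Using Krause's description of homomorphisms between string modules (Remark \ref{rem:stringhoms}) together with condition $(+)$ and $(\ref{eq:proj})$, one then shows: for $n\ge1$ the module $B_{n,j}$ has a non-identity canonical endomorphism which factors through $M(S)$ but not through a projective $\Lambda_0$-module, so $\dim_k\underline{\mathrm{End}}_{\Lambda_0}(B_{n,j})\ge 2$; while for $n=0$ one has $\underline{\mathrm{End}}_{\Lambda_0}(B_{0,j})\cong k$ for $j\in\{0,1,2\}$ (for $j=0$ this is exactly Lemma \ref{lem:needthis}, and the cases $j=1,2$ are entirely analogous), and $\mathrm{Ext}^1_{\Lambda_0}(B_{0,j},B_{0,j})\cong k$ for $j\in\{0,2\}$ and is zero for $j=1$. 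This already establishes that the modules of $\mathfrak C_S$ whose stable endomorphism ring is isomorphic to $k$ are exactly those in the $\Omega$-orbits of $M(S),M({}_hS),M({}_{hh}S)$, and, by Proposition \ref{prop:versal}, that $R(\Lambda_0,M({}_hS))\cong k$ while $R(\Lambda_0,M(S))$ and $R(\Lambda_0,M({}_{hh}S))$ are quotients of $k[[t]]$.

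Finally I would pin down the last two rings by repeating the two endgame arguments of Proposition \ref{prop:simples}. The non-split self-extension of $M(S)$ (whose class spans $\mathrm{Ext}^1_{\Lambda_0}(M(S),M(S))\cong k$) is realized by a string module $M(\widehat S)$, so $M(\widehat S)$ is a non-trivial lift of $M(S)$ over $k[[t]]/(t^2)$ and there is a surjection $R(\Lambda_0,M(S))\to k[[t]]/(t^2)$ in $\hat{\mathcal C}$; if this were not an isomorphism one would obtain a lift of $M(S)$ over $k[[t]]/(t^3)$, hence a short exact sequence $0\to M(S)\to M_0\to M(\widehat S)\to 0$ of $\Lambda_0$-modules which splits because $\mathrm{Ext}^1_{\Lambda_0}(M(\widehat S),M(S))=0$, and then, exactly as in Proposition \ref{prop:simples}, the $t$-action on $M_0\cong M(S)\oplus M(\widehat S)$ is forced to satisfy $t^2=0$, contradicting $t^2M_0\cong M(S)$; so $R(\Lambda_0,M(S))\cong k[[t]]/(t^2)$. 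For $M({}_{hh}S)$, I would write down a canonical $k$-basis of $M({}_{hh}S)$ relative to a convenient representative and perturb the matrix of one arrow by $tE$ for a suitable elementary matrix $E$ (as in the construction of $L_{0,2}$), obtaining a lift of $M({}_{hh}S)$ over $k[[t]]$ whose reduction mod $t^2$ is the non-trivial lift over $k[[t]]/(t^2)$; this makes the induced map $R(\Lambda_0,M({}_{hh}S))\to k[[t]]$ surjective, hence an isomorphism since the source is a quotient of $k[[t]]$. The statement for every module in the three $\Omega$-orbits then follows from Theorem \ref{thm:frobenius}(iv).

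The main obstacle is the combinatorial heart of the middle and final steps: producing the periodic words defining the $B_{n,j}$, the auxiliary string $\widehat S$ for the self-extension, and the single matrix entry to be deformed, and verifying the requisite vanishing and non-vanishing of $\mathrm{Hom}$- and $\mathrm{Ext}^1$-spaces from Krause's description. These computations are uniform in $S$ but considerably less transparent than in Proposition \ref{prop:simples}, because here $S$ is an arbitrary string of type $\beta\alpha^{-1}\lambda$ satisfying condition $(+)$ rather than an explicit short word.
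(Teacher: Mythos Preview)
Your proposal follows essentially the same route as the paper's proof: reduce to $i=0$, parametrize the component by a two-parameter family of string modules indexed by $(n,j)$, show the $n\ge 1$ members have stable endomorphism ring of dimension $\ge 2$ while the $n=0$ members have stable endomorphism ring $k$ with the stated $\mathrm{Ext}^1$-dimensions, and then import the two endgame arguments from Proposition~\ref{prop:simples} verbatim.

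Two small corrections to your combinatorics, which the paper handles slightly differently. First, the paper parametrizes the component not by adding hooks on the left of $S$ but by adjoining to $S$ on the right the periodic word $\delta\rho^{-1}\beta\alpha^{-1}\lambda\xi^{-1}$, setting $V_{n,j}=M\bigl(S(\delta\rho^{-1}\beta\alpha^{-1}\lambda\xi^{-1})^n\cdots\bigr)$ and then identifying $M(S)\cong\Omega(V_{0,0})$, $M({}_hS)\cong\Omega(V_{0,1})$, $M({}_{hh}S)\cong\Omega(V_{0,2})$ (this uses $S=S^-$, which holds under condition~$(+)$ by Remark~\ref{rem:string0nuB}); note that this periodic word is \emph{not} built from $\underline{x}$ and $\underline{y}$. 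Second, for $n\ge 1$ the witness endomorphism that does not factor through a projective is taken to factor through the simple module $M(1_1)$, not through $M(S)$; with the paper's parametrization the occurrence of the simple at vertex~$1$ inside the inserted periodic block makes this immediate, whereas your proposed witness through $M(S)$ is harder to justify. These are exactly the ``combinatorial heart'' details you flagged as the main obstacle, and once adjusted your outline matches the paper's proof.
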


\begin{proof}
It suffices to consider the case when $i=0$.
Using hooks and cohooks (see Definition \ref{def:arcomps}), we see that $\mathfrak{C}_{S,0}$ is
stable under $\Omega$ and that all $\Lambda_0$-modules
in $\mathfrak{C}_{S,0}$ lie in the $\Omega$-orbit of either 
\begin{eqnarray*}
V_{n,0}&=&M\left(S(\delta\rho^{-1}\beta\alpha^{-1}\lambda\xi^{-1})^n\delta\rho^{-1}\right),\mbox{ or}\\
V_{n,1}&=&M\left(S(\delta\rho^{-1}\beta\alpha^{-1}\lambda\xi^{-1})^n\delta\rho^{-1}\beta\alpha^{-1}\right),\mbox{ or}\\
V_{n,2}&=&M\left(S(\delta\rho^{-1}\beta\alpha^{-1}\lambda\xi^{-1})^{n+1}\right)
\end{eqnarray*}
for some $n\ge 0$. Since $S=S^-$ when $S$ satisfies condition $(+)$, we have 
$M(S)\cong\Omega(V_{0,0})$, $M({}_hS)\cong\Omega(V_{0,1})$ and $M({}_{hh}S)\cong\Omega(V_{0,2})$.
By Lemma \ref{lem:needthis}, the stable endomorphism ring of $M(S)$ is isomorphic to $k$. 
Using Remark \ref{rem:stringhoms} together with similar arguments as in the proof of 
Lemma \ref{lem:needthis}, it follows that the stable endomorphism ring of $V_{0,j}$ is isomorphic to $k$ 
for $j\in\{0,1,2\}$. Similarly, we analyze $\mathrm{Ext}^1_{\Lambda_0}(V_{0,j},V_{0,j})$ to see that
it is isomorphic to $k$ for $j\in\{0,2\}$ and zero for $j=1$. 
On the other hand, for $n\ge 1$,  $V_{n,j}$ has a non-zero endomorphism which 
factors through $M(1_1)$ and which does not factor through a projective $\Lambda_0$-module. 

The universal deformation rings of $V_{0,0}$, $V_{0,1}$ and $V_{0,2}$ are determined in a similar
way as in the proof of Proposition \ref{prop:simples}. It follows that
$R(\Lambda_0,V_{0,0})\cong k[[t]]/(t^2)$, $R(\Lambda_0,V_{0,1})\cong k$ and
$R(\Lambda_0,V_{0,2})\cong k[[t]]$. 
Using Theorem \ref{thm:frobenius}(iv), this proves Proposition \ref{prop:stringtype0}.
\end{proof}

\begin{prop}
\label{prop:3tubes}
Let $\mathfrak{T}_1$ and $\mathfrak{T}_2$ be the two $3$-tubes of the stable Auslander-Reiten 
quiver of $\Lambda_0$. Then $\Omega(\mathfrak{T}_1) = \mathfrak{T}_2$. 
Define $T=\alpha^{-1}$. Then $T_h=\alpha^{-1}\lambda\xi^{-1}$ and 
$T_{hh}=\alpha^{-1}\lambda\xi^{-1}\delta\rho^{-1}$.
The modules in $\mathfrak{T}_1\cup \mathfrak{T}_2$ whose stable endomorphism rings are 
isomorphic to $k$ are precisely the modules in the $\Omega$-orbits of the modules
$M(T)$, $M(T_h)$ and $M(T_{hh})$.
Their universal deformation rings are
$$R(\Lambda_0, M(T)) \cong k \cong  R(\Lambda_0, M(T_h)),
\qquad R(\Lambda_0, M(T_{hh})) \cong k[[t]].$$
\end{prop}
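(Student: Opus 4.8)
The plan is to mirror closely the structure of the proofs of Propositions \ref{prop:simples} and \ref{prop:stringtype0}, which handle the $\mathbb{Z}A_\infty^\infty$-components, but now adapted to the two $3$-tubes. First I would use hooks and cohooks (Definition \ref{def:arcomps}) to describe the two tubes $\mathfrak{T}_1$ and $\mathfrak{T}_2$ explicitly: $\mathfrak{T}_1$ should be the tube containing $M(\alpha)$ and $\mathfrak{T}_2$ the tube containing $M(\delta\beta)$, as already noted in the proof of Theorem \ref{thm:stablend}. Since $\Lambda_0$ is symmetric, $\Omega$ induces a self-equivalence of the stable module category, and one computes directly (e.g. from the projective cover $P_0 \to M(\alpha)$ using the picture of $P_0$ in (\ref{eq:proj})) that $\Omega(M(\alpha)) \cong M(\delta\beta)$, which gives $\Omega(\mathfrak{T}_1) = \mathfrak{T}_2$; the reverse inclusion follows since $\Omega^2$ is again a self-equivalence stabilising each tube (the tubes being the only $3$-tubes among the string components, they must be permuted by $\Omega$, and a $3$-tube cannot map to itself under $\Omega$ if $\Omega$ swaps the two, so $\Omega$ swaps them). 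Then I would list, using the hook/cohook combinatorics, the $\Omega$-orbit representatives of all modules in $\mathfrak{T}_1 \cup \mathfrak{T}_2$: the boundary modules of $\mathfrak{T}_1$ are the $M$ of the words obtained by repeatedly attaching hooks to $T = \alpha^{-1}$, namely (in the notation of the statement) $M(T)$, $M(T_h)$, $M(T_{hh})$ and their images further up the tube, and every module in the tube lies in the $\Omega$-orbit of one of the modules obtained from $T$, $T_h$, $T_{hh}$ by going up $\mathfrak{T}_1$.

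Next I would determine which of these modules have stable endomorphism ring isomorphic to $k$. Using Remark \ref{rem:stringhoms} (the Krause description of homomorphisms between string modules) together with the radical series of the $P_i$ in (\ref{eq:proj}), one checks that $\underline{\mathrm{End}}_{\Lambda_0}(M(T))$, $\underline{\mathrm{End}}_{\Lambda_0}(M(T_h))$ and $\underline{\mathrm{End}}_{\Lambda_0}(M(T_{hh}))$ are all isomorphic to $k$; this is the analogue of the computation for $A_{0,j}$ and $V_{0,j}$ in the earlier proofs. For modules strictly higher up in the tube one exhibits, exactly as in those proofs, a non-zero non-identity endomorphism factoring through a small string module (here factoring through the relevant simple module at the mouth direction) that does not factor through a projective, so the stable endomorphism ring strictly contains $k$. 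Then I would compute $\mathrm{Ext}^1_{\Lambda_0}(M(T),M(T))$, $\mathrm{Ext}^1_{\Lambda_0}(M(T_h),M(T_h))$ and $\mathrm{Ext}^1_{\Lambda_0}(M(T_{hh}),M(T_{hh}))$, again via Remark \ref{rem:stringhoms}; I expect the first two to vanish (giving $R(\Lambda_0, M(T)) \cong k \cong R(\Lambda_0, M(T_h))$ immediately by Proposition \ref{prop:versal}) and the third to be one-dimensional, so that $R(\Lambda_0, M(T_{hh}))$ is a quotient of $k[[t]]$.

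To pin down $R(\Lambda_0, M(T_{hh})) \cong k[[t]]$ it suffices, as in the proof of Proposition \ref{prop:simples} for $A_{0,2}$, to exhibit an actual lift of $M(T_{hh})$ over $k[[t]]$ whose reduction mod $t^2$ is a non-trivial lift: concretely, I would write down a free $k[[t]]$-module of rank $\dim_k M(T_{hh})$ on a canonical basis of $M(T_{hh})$, let all arrows act by their matrices $X_\zeta$ except one distinguished arrow which acts by $X_\zeta + t\,E_{j,0}$ for a suitable elementary matrix, verify this defines a $k[[t]]\otimes_k\Lambda_0$-module structure (i.e. that the relations in $I$ are still satisfied, which is where the specific shape of $T_{hh}$ and the relations $\alpha^2 = \lambda\delta\beta$, etc., matter), and observe that its reduction mod $t^2$ is the string module $M(T_{hh}\,\cdot\,T_{hh})$-type extension, which is a non-split self-extension. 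This forces the classifying map $\varphi: R(\Lambda_0, M(T_{hh})) \to k[[t]]$ to be surjective, and since the source is a quotient of $k[[t]]$, $\varphi$ is an isomorphism. The main obstacle I anticipate is not any single step but the bookkeeping: correctly identifying the hook/cohook structure of the two $3$-tubes (so that the $\Omega$-orbit representatives really are $T$, $T_h$, $T_{hh}$ and nothing is missed or double-counted because of the period-$3$ identification at the mouth), and then writing down an explicit lift over $k[[t]]$ that genuinely respects the relations in $I$ — the relation $\alpha^2 - \lambda\delta\beta$ in particular constrains which elementary perturbation $t\,E_{j,0}$ is admissible. Once the right perturbation is found, the rest is routine and parallels Proposition \ref{prop:simples} verbatim.
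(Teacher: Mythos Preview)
Your proposal is correct and follows essentially the same route as the paper's proof: use the projective covers from $(\ref{eq:proj})$ to see that $\Omega$ swaps the two $3$-tubes, use hooks/cohooks together with Remark~\ref{rem:stringhoms} to identify the $\Omega$-orbits with stable endomorphism ring $k$ as those of $M(T)$, $M(T_h)$, $M(T_{hh})$, compute that $\mathrm{Ext}^1$ vanishes for the first two and is one-dimensional for the third, and then realise $R(\Lambda_0,M(T_{hh}))\cong k[[t]]$ by writing down an explicit lift exactly as for $A_{0,2}$ in Proposition~\ref{prop:simples}. The paper's proof is terser but structurally identical.
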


\begin{proof}
Using 
 the description of the projective indecomposable
$\Lambda_0$-modules in (\ref{eq:proj}), we see that 
$\Omega(\mathfrak{T}_1) = \mathfrak{T}_2$. Using 
hooks and cohooks (see Definition \ref{def:arcomps}) and 
Remark \ref{rem:stringhoms}, it is straightforward to show that 
the only $\Lambda_0$-modules in $\mathfrak{T}_1\cup \mathfrak{T}_2$ whose
stable endomorphism rings are isomorphic to $k$ lie in the $\Omega$-orbit of
either $C_0=M(T)$, $C_1=M(T_h)$ or $C_2=M(T_{hh})$. Since
$\mathrm{Ext}^1_{\Lambda_0}(C_j,C_j)=0$ for $j\in\{0,1\}$, we have 
$R(\Lambda_0,C_j)\cong k$ for $j\in\{0,1\}$. Since $\mathrm{Ext}^1_{\Lambda_0}(C_2,C_2)
\cong k$, it follows that $R(\Lambda_0,C_2)$ is a quotient of $k[[t]]$.
Using similar arguments as in the proof of Proposition \ref{prop:simples}, we obtain
that $R(\Lambda_0,C_2)\cong k[[t]]$, which proves Proposition \ref{prop:3tubes}.
\end{proof}


\subsection{Stable endomorphism rings and universal deformation rings of band modules for $\Lambda_0$}
\label{ss:udrband}

In this section, we describe all band modules for $\Lambda_0$ whose stable endomorphism rings are isomorphic 
to $k$ and determine their universal deformation rings.

It follows from \cite{krau} that if $B$ is a band, $\mu\in k^*$ and $n\ge 2$ is an  
integer, then $\underline{\mathrm{End}}_{\Lambda_0}(M(B,\mu,n))$ has $k$-dimension
at least $2$. Hence we can concentrate on the band modules $M_{B,\mu}=M(B,\mu,1)$.

For the description of all the band modules whose stable endomorphism rings are
isomorphic to $k$, we need to look at special bands, which we call bands of type $0$
and which are defined using the words of type $0$ and their rotations of type $0$
introduced in Definition \ref{def:important}.

\begin{dfn}
\label{def:standardband}
\begin{enumerate}
\item[(i)]
We say a word $B$ representing a band is a  \emph{top-socle representative} if 
$$B=C_1C_2^{-1}C_3C_4^{-1}\cdots C_{2l-1}C_{2l}^{-1}$$
where $l\ge 1$, $C_{2i-1}\in\{\beta,\beta\lambda,\delta,\delta\beta,\lambda,\lambda\delta\}$
and $C_{2i}\in \{\alpha,\rho,\xi\}$ 
for $1\le i\le l$. The words $C_1,C_2,\ldots,C_{2l}$ are called the \emph{top-socle pieces} of $B$.
The positive integer $l$ is called the \emph{top-socle length} of $B$.
For $1\le j\le 2l$, $s(C_j)$ is called the \emph{top} of $C_j$ and $e(C_j)$ is called the \emph{socle} of $C_j$.
The collection of all the tops (resp. of all the socles) of the $C_{2i-1}$, $1\le i\le l$, where we
count multiplicities, is called 
the \emph{top} (resp. the \emph{socle}) of $B$.

\item[(ii)] We say a band is a \emph{band of type $0$} if it has a top-socle representative $B$ which is a 
word of type $0$, as defined in Definition \ref{def:important}(ii). 
In other words,
$$B=\underline{z}_0\, \underline{z}_1\,\cdots \,\underline{z}_{n-1}$$
where $n\ge 1$ and $\underline{z}_0,\ldots,\underline{z}_{n-1}\in\{\underline{x},\underline{y}\}$.
A \emph{standard representative of a band of type $0$} is a top-socle representative which is a word
of type $0$. This is unique up to rotations of type $0$, as defined in Definition \ref{def:important}(iv).

\item[(iii)] Let $B=\underline{z}_0\, \underline{z}_1\,\cdots \,\underline{z}_{n-1}$ be a standard 
representative of a band of type $0$. A \emph{rotated subpattern} $U$ of $B$ is a subpattern of 
$\rho_i^{(0)}(B)$ for some $0\le i\le n-1$, where $\rho_i^{(0)}$ is as in Definition 
\ref{def:important}(iv). Thus 
either $U=1_0$  or $U=\underline{z}_i\,\underline{z}_{i+1}\, \cdots \, \underline{z}_j$ for
some $0\le i\le  j\le n-1$ or 
$U=\underline{z}_i\,\underline{z}_{i+1} \cdots \,\underline{z}_{n-1}\underline{z}_0\,\cdots\, \underline{z}_j$
for some $0\le j<  i\le n-1$.
The \emph{inverse pattern} $U^-$ of $U$ is given
as in Definition \ref{def:important}(iii).

\item[(iv)] Let $B=\underline{z}_0\, \underline{z}_1\,\cdots \,\underline{z}_{n-1}$ be a standard 
representative of a band of type $0$, and let $\omega\in\{0,1,\ldots,n-1\}$. 
We say $B$ \emph{allows a wrap-around at $\omega$} if 
$\underline{z}_i=\underline{z}_{\omega-i}$ for all $i$, where the indices are taken modulo $n$. 
\end{enumerate}
\end{dfn}

\begin{rem}
\label{rem:maybe}
Suppose $B=\underline{z}_0\, \underline{z}_1\,\cdots \,\underline{z}_{n-1}$ is a standard 
representative of a band of type $0$.
\begin{enumerate}
\item[(i)] Since $B$ is not a power of a smaller word, $B$ allows at most one wrap-around.
\item[(ii)] Suppose $B$ allows a wrap-around at $\omega$. Let $\mu,\tilde{\mu}\in k^*$, and let
$M_{B,\mu}=M(B,\mu,1)$ and $M_{B,\tilde{\mu}}=M(B,\tilde{\mu},1)$.
We obtain the following sequence of canonical homomorphisms from $M_{B,\mu}$ to
$M_{B,\tilde{\mu}}$, as defined in Remark \ref{rem:bandendohelp}, associated to the wrap-around 
at $\omega$.

Let $\{b_0,b_1,\ldots,b_{5n-1}\}$ (resp. 
$\{\tilde{b}_0,\tilde{b}_1,\ldots,\tilde{b}_{5n-1}\}$) 
be a canonical $k$-basis for $M_{B,\mu}$ (resp. $M_{B,\tilde{\mu}}$)
relative to the representative $B$ (see Definition \ref{def:bands}). 
Let $0\le j\le n-1$. Define $\nu_j=1$ and $\epsilon_j=2$ if $\underline{z}_j=\underline{x}$ 
(resp. $\nu_j=2$ and $\epsilon_j=1$ if $\underline{z}_j=\underline{y}$). 
Then $\xi_{j,1}$ (resp. $\xi_{j,2}$) with $\xi_{j,1}(b_{5j+\nu_j})=\tilde{b}_{5(\omega-j)+\nu_{j}+1}$
and $\xi_{j,1}(b_i)=0$ for all other $i$ (resp. $\xi_{j,2}(b_{5j+4})=\tilde{b}_{5(\omega-j)}$ and 
$\xi_{j,2}(b_i)=0$ for all other $i$) defines a canonical homomorphism from
$M_{B,\mu}$ to $M_{B,\tilde{\mu}}$ of string type $1_{\epsilon_j}$ (resp. $1_0$).

Considering the projective indecomposable $\Lambda_0$-modules,
it follows that $\xi_{j,1}+\xi_{j,2}$ for $j\in\{0,1,\ldots,n-1\}-\{\omega\}$ and
$\xi_{\omega,1}+\tilde{\mu}\xi_{\omega,2}$ all factor through projective $\Lambda_0$-modules.
Also, $\xi_{j,2}+\xi_{j+1,1}$ for $j\in\{0,1,\ldots,n-2\}$ and $\xi_{n-1,2}+\mu\xi_{0,1}$
all factor through projective $\Lambda_0$-modules.
We conclude that every homomorphism in $\{\xi_{j,1},\xi_{j,2}\;|\;0\le j\le n-1\}$ factors
through a projective $\Lambda_0$-module if and only if  $\mu\tilde{\mu}\neq 1$.
\end{enumerate}
\end{rem}

\begin{lemma}
\label{lem:needthisforbands}
Let $B=\underline{z}_0\underline{z}_1\cdots  \underline{z}_{n-1}$
be a standard representative of a band $B$ of type $0$ as in Definition $\ref{def:standardband}$,
let $\mu\in k^*$ and let $M_{B,\mu}=M(B,\mu,1)$. The stable endomorphism ring of $M_{B,\mu}$, 
$\underline{\mathrm{End}}_{\Lambda_0}(M_{B,\mu})$, is isomorphic to $k$ if and only if
$\mu\neq \pm 1$ and $B$ has the following property:
\begin{enumerate}
\item[(++)] 
If there are rotated subpatterns $U,V$ of $B$ such that 
$B\sim_{r,(0)}\underline{x}\,U\,\underline{x}\,V$, 
then for all rotated subpatterns $W$ of $B$, we have 
$B\not\sim_{r,(0)} \underline{y}\,U\,\underline{y}\, W$. 
\end{enumerate}
Here $\sim_{r,(0)}$ is the equivalence relation introduced in Definition $\ref{def:important}(iv)$.
\end{lemma}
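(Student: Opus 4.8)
The plan is to follow the same template used for Lemma \ref{lem:needthis}, but with the additional scalar parameter $\mu$ playing a role. By \cite{krau} (see Remark \ref{rem:bandendohelp}), a $k$-basis of $\underline{\mathrm{End}}_{\Lambda_0}(M_{B,\mu})$ is spanned by the identity together with the residues of the canonical homomorphisms $M_{B,\mu}\to M_{B,\mu}$ that do not factor through a projective module. So it suffices to enumerate all canonical endomorphisms of $M_{B,\mu}$ and decide which ones are stably nonzero. First I would note that the canonical endomorphisms of a band module $M_{B,\mu}$ come in three flavours, according to whether the underlying ``common subword'' is trivial (length $0$), of length $1$, or of length $\ge 2$; the shapes of $P_0,P_1,P_2$ in $(\ref{eq:proj})$ control which short subwords occur. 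As with the string case, the length-$0$ and length-$1$ possibilities force the common subword to be $\delta$ or $1_u$ for some $u$, and a direct check shows each such homomorphism either is the identity contribution or factors through a projective unless $B$ violates property $(++)$. The genuinely new input is Remark \ref{rem:maybe}(ii), which handles the ``wrap-around'' endomorphisms: it tells us precisely that when $B$ allows a wrap-around at some $\omega$ the associated family $\{\xi_{j,1},\xi_{j,2}\}$ contributes a stably nonzero endomorphism exactly when $\mu\tilde\mu=1$, i.e. (taking $\tilde\mu=\mu$) exactly when $\mu^2=1$, that is $\mu=\pm1$.

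Concretely, I would organise the argument as follows. Step 1: reduce to showing $\underline{\mathrm{End}}_{\Lambda_0}(M_{B,\mu})$ has $k$-dimension exactly $1$ iff $\mu\neq\pm1$ and $(++)$ holds, using that this dimension is $1$ plus the number of linearly independent stably nonzero canonical endomorphisms. Step 2: handle wrap-arounds. If $B$ does not allow a wrap-around, no such contribution arises. If $B$ does allow one (necessarily unique, by Remark \ref{rem:maybe}(i)), then by Remark \ref{rem:maybe}(ii) with $\tilde\mu=\mu$, every $\xi_{j,i}$ factors through a projective iff $\mu^2\neq1$; when $\mu=\pm1$ one gets a stably nonzero endomorphism, so $\underline{\mathrm{End}}$ has dimension $\ge2$ and the ``only if'' direction holds in that case. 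Step 3: handle the endomorphisms whose common subword has length $\ge 2$. Exactly as in the proof of Lemma \ref{lem:needthis}, such a subword must itself be (a rotation of) a subpattern of $B$ built from $\underline{x}$'s and $\underline{y}$'s, and it extends to a canonical endomorphism that does \emph{not} factor through a projective precisely when one can write $B\sim_{r,(0)}\underline{x}\,U\,\underline{x}\,V$ and also $B\sim_{r,(0)}\underline{y}\,U\,\underline{y}\,W$ for rotated subpatterns — i.e. precisely when $(++)$ fails. (The passage from the ``overlap on both sides'' picture to nonvanishing modulo projectives is the same local computation with the relations $\alpha^2-\lambda\delta\beta$, $\rho^2-\beta\lambda\delta$, $\xi^2-\delta\beta\lambda$ as in Lemma \ref{lem:needthis}.) Step 4: handle the length-$0$ and length-$1$ common subwords, showing that any stably nonzero such endomorphism again forces a failure of $(++)$, so that these contribute nothing new when $(++)$ holds and $\mu\neq\pm1$. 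Combining, when $\mu\neq\pm1$ and $(++)$ holds every canonical endomorphism other than the identity factors through a projective, so $\underline{\mathrm{End}}_{\Lambda_0}(M_{B,\mu})\cong k$; and conversely if $\mu=\pm1$ or $(++)$ fails we have exhibited a stably nonzero non-identity endomorphism.

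The main obstacle I anticipate is the bookkeeping in Step 3: one must verify carefully that an overlapping pair of subpatterns in a \emph{cyclic} word $B$ (where rotations $\rho_i^{(0)}$ identify different linear representatives) really does produce a canonical endomorphism that survives in the stable category, and that the two conditions $B\sim_{r,(0)}\underline{x}\,U\,\underline{x}\,V$ and $B\sim_{r,(0)}\underline{y}\,U\,\underline{y}\,W$ together are not only sufficient but also necessary for this to happen — in particular one needs the shapes in $(\ref{eq:proj})$ to rule out any other way of getting a stably nonzero endomorphism. The wrap-around case (Step 2) is conceptually the subtle one but has already been isolated in Remark \ref{rem:maybe}(ii), so that part of the proof is essentially a citation. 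Finally I would not spell out the last sentence of the analogous Lemma \ref{lem:needthis} here (``all modules in the component have this property''), since the statement of Lemma \ref{lem:needthisforbands} does not include it; band-module components are $1$-tubes, and that finer analysis is deferred to Theorem \ref{thm:stablendbands} and Proposition \ref{prop:udrbands}.
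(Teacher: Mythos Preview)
Your outline matches the paper's proof closely: the paper also splits according to the length of the string type $T$ of a canonical endomorphism, identifies the length~$\ge 2$ case with the failure of $(++)$ via the condition $(\ddagger)$, and for $T=1_u$ invokes Remark~\ref{rem:maybe}(ii) to produce the $\mu=\pm 1$ obstruction.

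There is, however, one genuine gap in your Step~2 that the paper fills with a forward reference. You write that if $B$ allows a wrap-around and $\mu=\pm 1$ then $\underline{\mathrm{End}}$ has dimension $\ge 2$, and this establishes the ``only if'' direction \emph{in that case}. But the ``only if'' direction also requires you to treat the situation where $\mu=\pm 1$, property $(++)$ holds, yet $B$ does \emph{not} allow a wrap-around: your Steps~3 and~4 produce nothing here (they only fire when $(++)$ fails), and Step~2 is vacuous. The paper resolves this by noting---with a forward reference to Proposition~\ref{prop:bandstype0}---that whenever $B$ satisfies $(++)$ it \emph{must} allow a (unique) wrap-around, so this troublesome case is empty. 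Remark~\ref{rem:maybe}(i) only gives uniqueness, not existence; existence is a nontrivial consequence of $(++)$ that is established later. You should either cite Proposition~\ref{prop:bandstype0} at this point (as the paper does) or supply an independent argument that $(++)$ forces a wrap-around.
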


\begin{proof}
Since for $B\in\{\underline{x},\underline{y}\}$,
$\underline{\mathrm{End}}_{\Lambda_0}(M_{B,\mu})\cong k$ if and only if 
$\mu\neq\pm 1$ and since $\underline{x}$ and $\underline{y}$ have property $(++)$, 
Lemma \ref{lem:needthisforbands}
follows for $n=1$. For the remainder of the proof, let 
$B=\underline{z}_0\underline{z}_1\cdots  \underline{z}_{n-1}$ be a standard representative of
a band of type $0$ with $n\ge 2$, and let $\mu\in k^*$. 
We analyze the different possibilities for $M_{B,\mu}$ such that
$\underline{\mathrm{End}}_{\Lambda_0}(M_{B,\mu})$ has $k$-dimension at least $2$.

Suppose first that there exists a string $T$ of length at least $2$ such that $M_{B,\mu}$ has a canonical
endomorphism $\alpha_T$ of string type $T$, as described in Remark \ref{rem:bandendohelp}. 
In particular,  $\underline{\mathrm{End}}_{\Lambda_0}(M_{B,\mu})$ has $k$-dimension at least $2$. 
It follows from the definition of bands of type $0$ that
$$T\sim_s\beta\alpha^{-1}U\lambda$$
for some rotated subpattern $U$ of $B$
and that there exist rotated subpatterns $V,W$ of $B$
such that
\begin{itemize}
\item[$(\ddagger)$] $B\sim_{r,(0)}\underline{x}\,U\,\underline{x}\,V$ and
$B\sim_{r,(0)}\underline{y}\, U \, \underline{y}\,W$. 
\end{itemize}
On the other hand, if $B$ satisfies $(\ddagger)$, then $M_{B,\mu}$ has a canonical
endomorphism factoring through a string module with at least three composition factors.

Next let $T$ be a string of length $0$ or $1$ such that $M_{B,\mu}$ has a canonical
endomorphism $\alpha_T$ of string type $T$. Then either $T=\delta$ or $T=1_u$ for some
$u\in\{0,1,2\}$. 

Analyzing these cases further, it follows that if $T=\delta$ then
$\alpha_T$ does not factor through a projective 
$\Lambda_0$-module only if $B$ satisfies $(\ddagger)$
for certain rotated subpatterns $U,V,W$ of $B$.

Next let $T=1_u$ for some $u\in\{0,1,2\}$. Then $\alpha_T$ does not factor through a projective 
$\Lambda_0$-module only if either (a) $B$ satisfies $(\ddagger)$ for 
certain $U,V,W$, or (b) $B$ has property $(++)$ and $\mu=\pm 1$.
To explain (b), we first note that if $B$ has property $(++)$, then $B$ allows a wrap-around at 
$\omega$ for a unique $\omega\in\{0,1,\ldots,n-1\}$ (see  Proposition \ref{prop:bandstype0}).
Using Remark \ref{rem:maybe}(ii), it follows that
if $B$ satisfies $(++)$ then $\alpha_T$ does not factor through a projective $\Lambda_0$-module
if and only if there exist $0\le j\le n-1$ and $s\in\{1,2\}$ such that $\alpha_T=\xi_{j,s}$
and $\mu=\pm 1$.
This completes the proof of Lemma \ref{lem:needthisforbands}.
\end{proof}

\begin{dfn}
\label{def:pandq}
Define
\begin{equation}
\label{eq:list1}
\underline{p}=\lambda\xi^{-1}\delta\rho^{-1}\beta\alpha^{-1}
\quad\mbox{and}\quad
\underline{q}=\lambda\delta\rho^{-1}\beta\lambda\xi^{-1}\delta\beta\alpha^{-1}.
\end{equation}
\end{dfn}

\begin{thm}
\label{thm:stablendbands}
Let $M$ be a band module for $\Lambda_0$. Then the stable  endomorphism ring of $M$ 
is isomorphic to $k$ if and only if either
\begin{enumerate}
\item[(i)] there exist $\mu\in k^*$ and $B\in\{\underline{p},\underline{q}\}$ such that
$M\cong M_{B,\mu}$, or
\item[(ii)] there exist $i\in\{0,1,2\}$, $\mu\in k^*$ with $\mu\neq \pm 1$, and
a standard representative $B$ of a band of type $0$
satisfying condition $(++)$ from Lemma $\ref{lem:needthisforbands}$ such that
$M\cong M_{\theta^i(B),\mu}$. 
\end{enumerate}
\end{thm}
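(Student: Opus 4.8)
The plan is to mirror the structure of the string-module analysis (Theorem \ref{thm:stablend} together with Lemma \ref{lem:needthis}) but now working with band modules $M_{B,\mu}=M(B,\mu,1)$, and to reduce an arbitrary band module to one of the two normal forms in (i) or a band of type $0$ as in (ii). First I would recall from the remark at the start of \S\ref{ss:udrband} that it suffices to treat band modules with $n=1$ (the multiplicity one case), since for $n\ge 2$ the stable endomorphism ring of $M(B,\mu,n)$ has $k$-dimension at least $2$ by \cite{krau}. Next, using the symmetry operators $\theta$, $\nu_0,\nu_1,\nu_2$ from Definition \ref{def:bijections} (and their band-analogues), I would argue that every band $B$ can be rotated and transported by a power of $\theta$ so that, after possibly composing with a $\nu_u$, it is brought into one of the standard shapes: either a top-socle representative whose top-socle pieces alternate in the pattern forcing $B$ to be a band of type $0$, or else a band whose pattern is ``degenerate'' in a controlled way. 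This is the exact band analogue of the reduction in the proof of Theorem \ref{thm:stablend}, where an arbitrary string is pushed into the form $\beta\alpha^{-1}\lambda\cdots$; here the cyclic nature of bands means the ``endpoints'' disappear and the two exceptional bands $\underline p$ and $\underline q$ from Definition \ref{def:pandq} play the role of the short strings $\alpha$, $\delta\beta$, and $1_i$ from the string case.

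For the forward direction, assume $\underline{\mathrm{End}}_{\Lambda_0}(M)\cong k$. After the reduction above we may assume $M\cong M_{\theta^i(B),\mu}$ for some top-socle representative $B$. If $B$ is \emph{not} a band of type $0$, then — arguing as in the ``case (a)/(b)'' analysis at the end of the proof of Lemma \ref{lem:needthis}, but cyclically — one shows that $B$ contains (after rotation) a sub-configuration of the form $\lambda\delta\cdots$ or $\lambda\xi^{-1}\cdots$ that is not itself part of a longer word of type $0$, and this produces a nontrivial canonical endomorphism of $M_{B,\mu}$ (typically of string type $1_u$ or $\delta$, via the description of homomorphisms between band modules in Remark \ref{rem:bandendohelp}) that does not factor through a projective, contradicting $\dim_k\underline{\mathrm{End}}=1$ — unless $B$ itself is one of the two short bands $\underline p$ or $\underline q$, giving case (i). If $B$ \emph{is} a band of type $0$, then Lemma \ref{lem:needthisforbands} applies directly: $\underline{\mathrm{End}}_{\Lambda_0}(M_{B,\mu})\cong k$ forces $\mu\neq\pm1$ and property $(++)$, which is exactly case (ii). For the converse direction, one checks that each band module listed in (i) and (ii) does have stable endomorphism ring $k$: for (ii) this is precisely Lemma \ref{lem:needthisforbands} (one must also observe that $\underline p$ and $\underline q$ either are of type $0$ with small enough length or must be excluded from the ``$\mu\neq\pm1$'' clause and treated separately — indeed $\underline p=\underline x$ is already a band of type $0$, so the genuinely new content of (i) is $\underline q$, or the special values $\mu=\pm1$), and for the remaining cases one computes the canonical endomorphisms explicitly using (\ref{eq:proj}) and Remark \ref{rem:bandendohelp} and verifies they all factor through projectives.

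The main obstacle I expect is the reduction step: showing that \emph{every} band over $\Lambda_0$, after the action of $\theta$, rotation, and the $\nu_u$'s, is either a band of type $0$ or $\theta$-equivalent to $\underline p$ or $\underline q$, \emph{or} else manifestly fails $\dim_k\underline{\mathrm{End}}=1$. This requires a careful enumeration of the possible top-socle pieces $C_{2i-1}\in\{\beta,\beta\lambda,\delta,\delta\beta,\lambda,\lambda\delta\}$ and $C_{2i}\in\{\alpha,\rho,\xi\}$ compatible with the relations $I$, organized so that the ``type $0$'' pattern $\underline z_j\in\{\underline x,\underline y\}$ emerges as the only one avoiding an obstructing canonical endomorphism — entirely parallel to, but combinatorially heavier than, the argument isolating strings of type $\beta\alpha^{-1}\lambda$ in Theorem \ref{thm:stablend}. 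A secondary subtlety is the bookkeeping of the special parameter values $\mu=\pm1$, which in the band-of-type-$0$ case with a wrap-around (Remark \ref{rem:maybe}(ii)) are exactly the values where the homomorphisms $\xi_{j,s}$ fail to factor through projectives; this is why the exceptional bands $\underline p$, $\underline q$ appear with \emph{all} $\mu\in k^*$ in (i) while the type-$0$ bands in (ii) require $\mu\neq\pm1$.
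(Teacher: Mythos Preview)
Your overall strategy---reduce an arbitrary band to either a band of type $0$ (then invoke Lemma \ref{lem:needthisforbands}) or to one of the exceptional bands $\underline{p},\underline{q}$---matches the paper's approach, but there is a concrete error and a missing structural idea. First, the error: you write that $\underline{p}=\underline{x}$, but this is false. From Definitions \ref{def:important}(i) and \ref{def:pandq}, $\underline{x}=\lambda\xi^{-1}\delta\beta\alpha^{-1}$ has length $5$ while $\underline{p}=\lambda\xi^{-1}\delta\rho^{-1}\beta\alpha^{-1}$ has length $6$; in fact neither $\underline{p}$ nor $\underline{q}$ is (or is $\theta$-equivalent to) a band of type $0$, since every top-socle piece of $\underline{p}$ has length $1$ and every one of $\underline{q}$ has length $2$, whereas a band of type $0$ mixes length-$1$ and length-$2$ pieces. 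So the cases (i) and (ii) of the theorem are genuinely disjoint, and your attempt to fold $\underline{p}$ into case (ii) is misguided.

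Second, the paper's reduction does not use the $\nu_u$'s at all; instead it pivots on a clean dichotomy you have not identified. Writing the top-socle representative as $C=D_1D_2^{-1}\cdots D_{2l-1}D_{2l}^{-1}$, the paper first observes that if all $D_{2j-1}$ have the \emph{same} length then $C\sim_r\underline{p}$ or $C\sim_r\underline{q}$ (and a direct check gives stable endomorphism ring $k$ for \emph{every} $\mu$). Otherwise some $D_{2j-1}$ has length $1$ and some has length $2$, so after a single $\theta$-shift one can decompose $\tilde{C}=\theta^{-i_0}(C)=Z_0Z_1\cdots Z_{m-1}$ into blocks $Z_j\in\{\underline{p}_r,\underline{q}_r,\underline{x}_r,\underline{y}_r\mid r\ge 0\}$, where each $Z_j$ runs from one occurrence of vertex $0$ in the top to the next. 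This block decomposition is the missing combinatorial tool: knowing that at least one $Z_j$ equals $\underline{x}_0$ or $\underline{y}_0$, one then shows the stable endomorphism ring has dimension $\ge 2$ unless $\tilde{C}$ is built only from $\{\underline{x}_0,\underline{y}_0\}$, only from $\{\underline{x}_0,\underline{x}_1\}$, or only from $\{\underline{y}_0,\underline{y}_1\}$---and in each of these three cases a further $\theta$-shift brings $\tilde{C}$ to a band of type $0$. Without this decomposition your ``sub-configuration $\lambda\delta\cdots$ or $\lambda\xi^{-1}\cdots$'' heuristic is too vague to carry the argument.
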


\begin{proof}
Let $C$ be a top-socle representative of a band whose top-socle length is $l\ge 1$,
and let $\mu\in k^*$ be such that $M\cong M_{C,\mu}$. 
As in Definition \ref{def:standardband}(i), $C$ has the form 
$$C=D_1D_2^{-1}D_3D_4^{-1}\cdots D_{2l-1}D_{2l}^{-1}$$
where $D_{2j-1}\in\{\beta,\beta\lambda,\delta,\delta\beta,\lambda,\lambda\delta\}$
and $D_{2j}\in\{\alpha,\rho,\xi\}$.
In particular, $l\ge 2$.

If all $D_{2j-1}$ have the same length, then either $C\sim_r \underline{p}$ or 
$C\sim_r \underline{q}$. 
In this case the stable endomorphism 
ring of $M_{C,\mu}$ is isomorphic to $k$ for all $\mu$.

For the remainder of the proof suppose that not all $D_{2j-1}$ have the same length. 
Then there exists $i_0\in\{0,1,2\}$ such that $C$ has a subword of the form 
$\theta^{i_0}(\underline{x})$ or $\theta^{i_0}(\underline{y})$. This means that we can write
$$\tilde{C} \stackrel{\mathrm{def}}{=} \theta^{-i_0}(C) = Z_0 Z_1\cdots Z_{m-1}$$
where for all $0\le j\le m-1$, 
$Z_j$ 
is a sequence of top-socle pieces
of $\tilde{C}$ such that $Z_j=\lambda\cdots \alpha^{-1}$ and $1_0$ occurs exactly 
once in the top of $Z_j$. 
Hence $Z_j\in\{\underline{p}_r,\underline{q}_r,\underline{x}_r,\underline{y}_r\;|\;r\ge 0\}$,
where
$$\underline{p}_r=
\lambda\xi^{-1}\delta\rho^{-1}\left(\beta\lambda\xi^{-1}\delta\rho^{-1}\right)^r\beta\alpha^{-1},\quad
\underline{q}_r=\lambda\left(\delta\rho^{-1}\beta\lambda\xi^{-1}\right)^{r+1}\delta\beta\alpha^{-1},$$
$$\underline{x}_r=\lambda\xi^{-1}\left(\delta\rho^{-1}\beta\lambda\xi^{-1}\right)^r\delta\beta\alpha^{-1},
\quad
\underline{y}_r=\lambda\delta\rho^{-1}\left(\beta\lambda\xi^{-1}\delta\rho^{-1}\right)^r\beta\alpha^{-1}.$$
In particular, 
$\underline{p}_0=\underline{p}$, $\underline{q}_0=\underline{q}$, $\underline{x}_0=\underline{x}$,
$\underline{y}_0=\underline{y}$.
Using that at least one of the $Z_j$ is $\underline{x}_0$ or $\underline{y}_0$, we obtain that
the stable endomorphism ring of $M_{\tilde{C},\mu}$, and hence of $M_{C,\mu}$, has $k$-dimension
at least 2 for all $\mu\in k^*$ except in the following cases: 
\begin{enumerate}
\item[(a)] $\tilde{C}$ only contains $\underline{x}_0$ and $\underline{y}_0$; or
\item[(b)] $\tilde{C}$ only contains $\underline{x}_0$ and $\underline{x}_1$; or
\item[(c)] $\tilde{C}$ only contains $\underline{y}_0$ and $\underline{y}_1$.
\end{enumerate}
Suppose that $\tilde{C}$ is as in one of the cases (a) -- (c). Then there exists $i_1\in\{0,1,2\}$
such that a  rotation of  $\theta^{i_1}(\tilde{C})$ is a standard representative of a band of type $0$. 
Thus  there exist $i\in\{0,1,2\}$ and a standard representative $B$ of a band of 
type $0$ such that  $M_{C,\mu}\cong M_{\theta^i(B),\mu}$.
By Lemma \ref{lem:needthisforbands}, it follows
that the stable endomorphism ring of $M_{C,\mu}$ is isomorphic to $k$ if and only if
$\mu\neq \pm 1$ and $B$ satisfies condition $(++)$.
This completes the proof of Theorem \ref{thm:stablendbands}.
\end{proof}

Because of the prominent role played by standard representatives of bands $B$ of type 
$0$ in Theorem \ref{thm:stablendbands}, the question arises 
which such $B$ satisfy condition $(++)$ from Lemma \ref{lem:needthisforbands}. We will give a precise
answer to this question in Proposition \ref{prop:bandstype0}.

We now look at the band modules for $\Lambda_0$ singled out by Theorem
\ref{thm:stablendbands}.

\begin{prop}
\label{prop:udrbands}
Let $B$ be a top-socle representative of a band and let $\mu\in k^*$. 
\begin{enumerate}
\item[(i)] 
If $\mu\in k^*$ and $\{B,B'\}=\{\underline{p},\underline{q}\}$, then 
$\Omega(M_{B,\mu})\cong M_{B',-\mu^{-1}}$.
\item[(ii)] If $\mu\in k^*-\{ \pm 1\}$ and there exists $i\in\{0,1,2\}$ such that $\theta^{-i}(B)$ is 
a standard representative of a band of type $0$ satisfying condition $(++)$ from Lemma 
$\ref{lem:needthisforbands}$, then  $\Omega(M_{B,\mu})\cong M_{B,\mu^{-1}}$.
\end{enumerate}
In each of the above cases, 
the universal deformation ring is $R(\Lambda_0,M_{B,\mu})\cong k[[t]]$.
\end{prop}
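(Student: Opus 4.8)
The plan is to treat the syzygy computations in parts (i) and (ii) first, and then to deduce the universal deformation ring from a single explicit one-parameter lift together with an upper bound coming from $\mathrm{Ext}^1$. For the syzygy statements, I would use the combinatorial description of syzygies of band modules for special biserial algebras in terms of top-socle representatives, together with the description of the projective indecomposables $P_0,P_1,P_2$ in $(\ref{eq:proj})$. In case (i), since $\underline{p}$ and $\underline{q}$ are top-socle representatives all of whose ``top'' pieces have the same length, the projective cover of $M_{\underline{p},\mu}$ is built from copies of $P_0,P_1,P_2$ dictated by the top of $\underline{p}$, and computing the kernel produces exactly a band module on the complementary word $\underline{q}$; a careful bookkeeping of the scalars at the two ``ends'' of the band (the places where the cyclic word closes up and where $\mu$ enters) shows the new scalar is $-\mu^{-1}$. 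In case (ii), the wrap-around symmetry of a band of type $0$ satisfying $(++)$ (as recorded in Remark \ref{rem:maybe} and Proposition \ref{prop:bandstype0}) forces the syzygy to lie on the \emph{same} band $B$, and the same scalar-tracking argument gives $\Omega(M_{B,\mu})\cong M_{B,\mu^{-1}}$; here one also uses Definition \ref{def:bijections}(ii) to reduce from $\theta^i(B)$ to $B$, since $\theta$ is an honest quiver automorphism and hence commutes with $\Omega$.

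For the deformation ring itself, in both cases $\underline{\mathrm{End}}_{\Lambda_0}(M_{B,\mu})\cong k$ by Theorem \ref{thm:stablendbands}, so by Theorem \ref{thm:frobenius}(ii) the versal deformation ring $R(\Lambda_0,M_{B,\mu})$ is universal. Next I would compute $\mathrm{Ext}^1_{\Lambda_0}(M_{B,\mu},M_{B,\mu})$ using the description of homomorphisms between string and band modules from \cite{krau} (as in Remark \ref{rem:stringhoms} and Remark \ref{rem:bandendohelp}) — or equivalently via the Auslander--Reiten sequence, since $M_{B,\mu}$ sits in a $1$-tube — to see that this space is one-dimensional over $k$. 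By Proposition \ref{prop:versal} this gives a surjection $k[[t]]\twoheadrightarrow R(\Lambda_0,M_{B,\mu})$. To see the map is an isomorphism it then suffices to exhibit a lift of $M_{B,\mu}$ over $k[[t]]$ that is not induced from any Artinian quotient's torsion, i.e. a genuinely infinitesimally-thick family. The natural candidate is the ``$t$-deformation of the band parameter'': take a canonical $k$-basis of $M_{B,\mu}$, let each arrow act by its defining matrix, and perturb the single matrix entry carrying the band scalar $\mu$ to $\mu+t$ (or equivalently $\mu(1+t)$), obtaining a free $k[[t]]$-module $L_{B,\mu}$ with a $k[[t]]\otimes_k\Lambda_0$-action reducing to $M_{B,\mu}$ mod $t$. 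One checks the relations in $I$ are still satisfied (they only involve the arrows around the band once each, and the perturbation is at a single spot), so $L_{B,\mu}$ is a lift; and since $L_{B,\mu}/t^n L_{B,\mu}$ has band parameter $\mu+t \bmod t^n$ which genuinely varies, the classifying map $\varphi\colon R(\Lambda_0,M_{B,\mu})\to k[[t]]$ is surjective. Combined with the surjection $k[[t]]\twoheadrightarrow R(\Lambda_0,M_{B,\mu})$ and the Noetherianity of these complete local rings, $\varphi$ must be an isomorphism, so $R(\Lambda_0,M_{B,\mu})\cong k[[t]]$. This mirrors the argument already used for $A_{0,2}$ in the proof of Proposition \ref{prop:simples}.

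The main obstacle I expect is the precise scalar bookkeeping in the syzygy computations of (i) and (ii): getting $-\mu^{-1}$ versus $\mu^{-1}$ versus some other normalization requires carefully fixing conventions for canonical bases of band modules (Definition \ref{def:bands}), for which arrow ``carries'' the parameter, and for the direction in which the cyclic word is read when forming the projective cover — a sign or an inversion is easy to misplace. A secondary subtlety is confirming that the perturbed action $L_{B,\mu}$ actually respects all the relations in $I$ (in particular the commutativity-type relations $\alpha^2-\lambda\delta\beta$, $\rho^2-\beta\lambda\delta$, $\xi^2-\delta\beta\lambda$), which needs a short direct check that the chosen perturbation entry does not interact with those relations; placing the perturbation at an arrow of type $\alpha,\rho$ or $\xi$ occurring in the band, as in Proposition \ref{prop:simples}, makes this transparent. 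Everything else — universality, the one-dimensionality of $\mathrm{Ext}^1$, and the final ``surjection from and onto $k[[t]]$ forces isomorphism'' step — is routine given the tools already assembled in \S\ref{s:prelim} and in Remarks \ref{rem:stringhoms}, \ref{rem:bandendohelp} and \ref{rem:maybe}.
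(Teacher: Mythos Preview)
Your plan is correct and matches the paper's proof almost step for step: the paper also computes the syzygies directly from the projective covers (using in case (ii) that $B\sim_{r,(0)}B^-$ via the unique wrap-around from Proposition \ref{prop:bandstype0}), then identifies $\mathrm{Ext}^1_{\Lambda_0}(M_{B,\mu},M_{B,\mu})$ with $\underline{\mathrm{Hom}}_{\Lambda_0}(\Omega(M_{B,\mu}),M_{B,\mu})$ and evaluates it using Remark \ref{rem:bandendohelp}, and finally refers back to the explicit $k[[t]]$-lift argument of Proposition \ref{prop:simples}, which is exactly your ``perturb the band parameter'' construction. The only point to tighten is your aside that the $1$-tube Auslander--Reiten sequence already gives $\dim_k\mathrm{Ext}^1=1$: that yields only the lower bound; for the upper bound in case (ii) the paper uses the syzygy $\Omega(M_{B,\mu})\cong M_{B,\mu^{-1}}$ together with Remark \ref{rem:maybe}(ii), observing that since $\mu^{-1}\cdot\mu=1$ the wrap-around homomorphisms $\xi_{j,s}$ do not all factor through projectives but are all congruent modulo projectives, so they contribute exactly one dimension to the stable $\mathrm{Hom}$.
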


\begin{proof}
Using that $\Lambda_0=P_2\oplus P_1\oplus P_0$ is a projective $\Lambda_0$-module cover of 
$M_{\underline{p},\mu}$ and analyzing the kernel $\Omega(M_{\underline{p},\mu})$ of the natural 
surjection $\Lambda_0 \to M_{\underline{p},\mu}$, it follows that 
$\Omega(M_{\underline{p},\mu})\cong M_{\underline{q},-\mu^{-1}}$.
Since $\Omega^2(M_{\underline{p},\mu})\cong M_{\underline{p},\mu}$, we also have 
$\Omega(M_{\underline{q},\mu})\cong M_{\underline{p},-\mu^{-1}}$.
Using Remark \ref{rem:bandendohelp}, we see that
the vector space $\mathrm{Hom}_{\Lambda_0}(M_{\underline{q},-\mu^{-1}},M_{\underline{p},\mu})$
is $3$-dimensional over $k$, but the subspace of homomorphisms factoring through projective
$\Lambda_0$-modules has $k$-dimension $2$. Hence 
$$\mathrm{Ext}^1_{\Lambda_0}(M_{\underline{p},\mu},M_{\underline{p},\mu})\cong
\underline{\mathrm{Hom}}_{\Lambda_0}(\Omega(M_{\underline{p},\mu}),M_{\underline{p},\mu})\cong
\underline{\mathrm{Hom}}_{\Lambda_0}(M_{\underline{q},-\mu^{-1}},M_{\underline{p},\mu})\cong k.$$
This implies that $R(\Lambda_0,M_{\underline{p},\mu})$ is a quotient of $k[[t]]$.
Using similar arguments as in the proof of Proposition \ref{prop:simples}, we obtain
that $R(\Lambda_0,M_{\underline{p},\mu})\cong k[[t]]$.
By Theorem \ref{thm:frobenius}(iv), we also get $R(\Lambda_0,M_{\underline{q},\mu})\cong k[[t]]$,
which proves part (i).

Suppose now that $\mu$, $i$ and $B$ are as in part (ii). Without loss of generality, we may assume
that $i=0$, which means that $B$ is a standard representative of a band of type $0$ satisfying 
condition $(++)$. 
Then $B$ allows a unique wrap-around (see Proposition \ref{prop:bandstype0}), 
which implies $B\sim_{r,(0)} B^-$. 
Since the top-socle length of $B$ is even, it follows that
$\Omega(M_{B,\mu}) \cong M_{B,\mu^{-1}}$.
Using Remark \ref{rem:bandendohelp}, we see that the $k$-dimension of 
the vector space $\mathrm{Hom}_{\Lambda_0}(M_{B,\mu^{-1}},M_{B,\mu})$ is one less than the
$k$-dimension of  $\mathrm{End}_{\Lambda_0}(M_{B,\mu})$,
since there is no identity homomorphism in the former space. 
Arguing in a similar way as in the proof 
that the stable endomorphism ring of $M_{B,\mu}$ is isomorphic to $k$, we see that all the 
canonical homomorphisms $M_{B,\mu^{-1}}\to M_{B,\mu}$ 
factor through projective 
$\Lambda_0$-modules except possibly the homomorphisms  $\xi_{j,1},\xi_{j,2}$, 
$0\le j\le n-1$,  from  Remark \ref{rem:maybe}(ii). Since $\mu^{-1}\mu=1$, 
it follows that
$$\mathrm{Ext}^1_{\Lambda_0}(M_{B,\mu},M_{B,\mu})\cong
\underline{\mathrm{Hom}}_{\Lambda_0}(\Omega(M_{B,\mu}),M_{B,\mu})\cong
\underline{\mathrm{Hom}}_{\Lambda_0}(M_{B,\mu^{-1}},M_{B,\mu})\cong k.$$
This implies that $R(\Lambda_0,M_{B,\mu})$ is a quotient of $k[[t]]$.
Using similar arguments as in the proof of Proposition \ref{prop:simples}, we obtain
that $R(\Lambda_0,M_{B,\mu})\cong k[[t]]$.
This completes the proof of Proposition \ref{prop:udrbands}
\end{proof}


\section{Appendix: Strings and bands for $\Lambda_0$ satisfying $(+)$ and $(++)$ from Section 
$\ref{s:particular}$}
\label{s:plus}
\setcounter{equation}{0}

We assume the notation from \S \ref{s:particular}. In particular,  $k$ is an algebraically closed field of 
arbitrary characteristic and $\Lambda_0$ is the basic $k$-algebra $\Lambda_0=kQ/I$ where 
$Q$ and $I$ are as in Figure \ref{fig:algebra}. 
Because of the prominent role played by standard representatives of strings $S$ of type 
$\beta\alpha^{-1}\lambda$ satisfying condition $(+)$ in Theorem \ref{thm:stablend},
respectively by standard representatives of bands $B$ of type $0$ satisfying condition $(++)$
in Theorem \ref{thm:stablendbands}, the question arises if one can give an explicit description
of such $S$ and $B$. In this appendix,
we give a precise answer to this question in Propositions \ref{prop:string0+} and
\ref{prop:bandstype0}.

Considering words of type $0$ of small type-$0$-length suggests that  to answer this question
one needs to use an inductive process. The key ingredients in this inductive process are described 
in the following definition. We assume Definition \ref{def:important}.

\begin{dfn}
\label{def:important2}
\begin{enumerate}
\item[(i)] Let $\ell$ be a positive integer, 
let $(i_1,\ldots, i_{\ell-1})$ be an $(\ell-1)$-tuple with $i_j\in\{0,1\}\mod 2$ for $1\le j\le \ell-1$,
and let $(a_1,\ldots,a_{\ell-1})$ be an $(\ell-1)$-tuple of positive integers. We define
$$N_0=\underline{x},\qquad N_1=\underline{y};$$
and for $1\le j\le \ell-1$,
\begin{eqnarray*}
N_{i_1,\ldots,i_j,0}(a_1,\ldots,a_j) &=& N_{i_1,\ldots,i_j}(a_1,\ldots,a_{j-1})^{a_j}\,
N_{i_1,\ldots,i_{j-1},(i_j)+1}(a_1,\ldots,a_{j-1}),\\
N_{i_1,\ldots,i_j,1}(a_1,\ldots,a_j) &=& N_{i_1,\ldots,i_j}(a_1,\ldots,a_{j-1})^{a_j+1}\,
N_{i_1,\ldots,i_{j-1},(i_j)+1}(a_1,\ldots,a_{j-1}).
\end{eqnarray*}

\item[(ii)] Let $\ell$ be a positive integer, 
let $(i_1,\ldots, i_{\ell-1})$ be an $(\ell-1)$-tuple with $i_j\in\{0,1\}\mod 2$ for $1\le j\le \ell-1$,
and let $(a_1,\ldots,a_{\ell-1})$ be an $(\ell-1)$-tuple of positive integers. 
Define 
\begin{eqnarray*}
\underline{x}^{(\ell)}&=&N_{i_1,\ldots,i_{\ell-1},0}(a_1,\ldots,a_{\ell-1}),\\
\underline{y}^{(\ell)} &=&  N_{i_1,\ldots,i_{\ell-1},1}(a_1,\ldots,a_{\ell-1}).
\end{eqnarray*}
We say a word $Z$ is a \emph{word of type $0$ on level $\ell$} if $Z=1_0$ or
$$Z=\underline{z}^{(\ell)}_0\cdots \underline{z}^{(\ell)}_{n-1}$$
where $n\ge 1$ and 
$\underline{z}^{(\ell)}_0,\ldots,\underline{z}^{(\ell)}_{n-1}\in\{\underline{x}^{(\ell)},\underline{y}^{(\ell)}\}$.
We call $n$ the \emph{type-$0$-length on level $\ell$} of $Z$, where $n=0$
corresponds to $Z=1_0$. Note that for $\ell=1$, this is the
original definition of a word of type $0$ from Definition \ref{def:important}.

If we want to emphasize the tuples used to define 
$\underline{x}^{(\ell)}$ and $\underline{y}^{(\ell)}$, we say $Z$ is a word of type $0$ on level $\ell$
relative to the $(\ell-1)$-tuples $(i_1,\ldots,i_{\ell-1})$ and $(a_1,\ldots,a_{\ell-1})$.

\item[(iii)] Let $Z=\underline{z}^{(\ell)}_0\cdots \underline{z}^{(\ell)}_{n-1}$ be a word of type $0$ 
on level $\ell$. A \emph{subpattern} $U$ of $Z$ \emph{on level $\ell$} is a subword
of $Z$ such that either $U=1_0$  or 
$U=\underline{z}^{(\ell)}_i \underline{z}^{(\ell)}_{i+1}\cdots  \underline{z}^{(\ell)}_j$ 
for some $0\le i\le  j\le n-1$.

\item[(iv)] Let $Z=\underline{z}^{(\ell)}_0\cdots \underline{z}^{(\ell)}_{n-1}$ be a word of type $0$ 
on level $\ell$ with $n\ge 1$. For $0\le i\le n-1$, we define
the \emph{$i^{\mathrm{th}}$ rotation of $Z$ of type $0$ on level $\ell$} to be
$$\rho_i^{(0,\ell)}(Z) = \underline{z}^{(\ell)}_i\underline{z}^{(\ell)}_{i+1}\cdots \underline{z}^{(\ell)}_{n-1}
\underline{z}^{(\ell)}_0\cdots \underline{z}^{(\ell)}_{i-1}.$$
Define $\sim_{r,(0,\ell)}$ to be the equivalence relation on all words of type $0$ on level $\ell$
of type-0-length on level $\ell$ at least 1 such that $Z\sim_{r,(0,\ell)} Z'$ if and only if 
$Z=\rho_i^{(0,\ell)}(Z')$ for some $i$.
Note that for $\ell=1$, the equivalence relation $\sim_{r,(0,\ell)}$ coincides with the
equivalence relation $\sim_{r,(0)}$ from Definition \ref{def:important}.

\item[(v)] Let $\ell$ be a positive integer, 
let $(i_1,\ldots, i_{\ell-1})$ be an $(\ell-1)$-tuple with $i_j\in\{0,1\}\mod 2$ for $1\le j\le \ell-1$,
and let $(a_1,\ldots,a_{\ell-1})$ be an $(\ell-1)$-tuple of positive integers. 
Define $\underline{x}^{(\ell)}$ and $\underline{y}^{(\ell)}$ as in part (ii) and define
\begin{eqnarray*}
\underline{b}^{(\ell)} &=& 
	\beta\alpha^{-1}N_{i_1,0}(a_1)\,N_{i_1,i_2,0}(a_1,a_2)\,\cdots \,
	N_{i_1,\ldots,i_{\ell-1},0}(a_1,\ldots,a_{\ell-1}),\\
\underline{l}^{(\ell)} &=& 
	N_{i_1,\ldots,i_{\ell-1}}(a_1,\ldots,a_{\ell-2})^{a_{\ell-1}}\,\cdots\,
	N_{i_1,i_2}(a_1)^{a_2}\,(N_{i_1})^{a_1}\,\lambda.
\end{eqnarray*}
We say a word $S$ is the \emph{standard representative of a string of type 
$\beta\alpha^{-1}\lambda$ on level $\ell$} if
$$S=\underline{b}^{(\ell)} Z\,\underline{l}^{(\ell)}$$
where $Z$ is a word of type $0$ on level $\ell$. 
Note that for $\ell=1$, this is the
original definition of the standard representative of a string of type 
$\beta\alpha^{-1}\lambda$ from Definition \ref{def:important1}.

If we want to emphasize the tuples used to define $\underline{x}^{(\ell)}$, $\underline{y}^{(\ell)}$,
$\underline{b}^{(\ell)}$ and $\underline{l}^{(\ell)}$, we say $S$ is the standard
representative of a string of type $\beta\alpha^{-1}\lambda$ on level $\ell$
relative to the $(\ell-1)$-tuples $(i_1,\ldots,i_{\ell-1})$ and $(a_1,\ldots,a_{\ell-1})$.

A \emph{subpattern} $U$ of $S$ \emph{on level $\ell$} is a 
subpattern of $Z$ on level $\ell$, as defined in part (iii). 

\item[(vi)] Let $\ell$ be a positive integer, 
let $(i_1,\ldots, i_{\ell-1})$ be an $(\ell-1)$-tuple with $i_j\in\{0,1\}\mod 2$ for $1\le j\le \ell-1$,
and let $(a_1,\ldots,a_{\ell-1})$ be an $(\ell-1)$-tuple of positive integers. 
Define $\underline{x}^{(\ell)}$ and $\underline{y}^{(\ell)}$ as in part (ii).
We say a band is a \emph{band of type $0$ on level $\ell$}
if it has a top-socle representative $B$ which is a word of type $0$ on level $\ell$. 
In other words,
$$B=\underline{z}^{(\ell)}_0\cdots \underline{z}^{(\ell)}_{n-1}$$
where $n\ge 1$ and 
$\underline{z}^{(\ell)}_0,\ldots,\underline{z}^{(\ell)}_{n-1}\in\{\underline{x}^{(\ell)},\underline{y}^{(\ell)}\}$.
A \emph{standard representative of a band of type $0$ on level $\ell$} is a top-socle representative 
which is a word of type $0$ on level $\ell$. This is unique up to rotations of type $0$ on level $\ell$,
as defined in part (iv).
Note that for $\ell=1$, this is the original definition of a band of type $0$ and a standard
representative of a band of type $0$ from Definition 
\ref{def:standardband}.

If we want to emphasize the tuples used to define 
$\underline{x}^{(\ell)}$ and $\underline{y}^{(\ell)}$, we say $B$ is a standard
representative of a band of type $0$ on level $\ell$
relative to the $(\ell-1)$-tuples $(i_1,\ldots,i_{\ell-1})$ and $(a_1,\ldots,a_{\ell-1})$.

\item[(vii)] Let $B=\underline{z}^{(\ell)}_0\, \underline{z}^{(\ell)}_1\,\cdots \,\underline{z}^{(\ell)}_{n-1}$ be a 
standard representative of a band of type $0$ on level $\ell$. A \emph{rotated subpattern} $U$ of $B$ 
\emph{on level $\ell$} is a subpattern of $\rho^{(0,\ell)}_i(B)$ for some $0\le i\le n-1$,
where $\rho^{(0,\ell)}_i$ is as in part (iv). Thus
either $U=1_0$  or $U=\underline{z}^{(\ell)}_i\,\underline{z}^{(\ell)}_{i+1}\, \cdots \, 
\underline{z}^{(\ell)}_j$  for some $0\le i\le  j\le n$ or
$U=\underline{z}^{(\ell)}_i\,\underline{z}^{(\ell)}_{i+1} \cdots \,\underline{z}^{(\ell)}_{n-1}
\underline{z}^{(\ell)}_0\,\cdots\, \underline{z}^{(\ell)}_j$ for some $0\le j<  i\le n$.

\item[(viii)] Let $B=\underline{z}^{(\ell)}_0\, \underline{z}^{(\ell)}_1\,\cdots \,\underline{z}^{(\ell)}_{n-1}$ 
be a standard representative of a band of type $0$ on level $\ell$, and let $\omega\in\{0,1,\ldots,n-1\}$. 
We say $B$ \emph{allows a wrap-around on level $\ell$ at $\omega$} if 
$\underline{z}^{(\ell)}_i=\underline{z}^{(\ell)}_{\omega-i}$ for all $i$, where the indices are taken modulo 
$n$. 
\end{enumerate}
\end{dfn}

\begin{rem}
\label{rem:easy}
\begin{enumerate}
\item[(i)]
Let $S$ be the standard representative of a string of type $\beta\alpha^{-1}\lambda$. Then
$S$ is the standard representative of a string of type $\beta\alpha^{-1}\lambda$ on level $1$.
There is a maximal positive integer $\ell$ such that $S$ is the standard representative of a string 
of type $\beta\alpha^{-1}\lambda$ on level $\ell$.

Conversely, if $S$ is the standard representative of a string of type $\beta\alpha^{-1}\lambda$ 
on level $\ell$ relative to the $(\ell-1)$-tuples $(i_1,\ldots,i_{\ell-1})$ and $(a_1,\ldots,a_{\ell-1})$,
then $S$ is the standard representative of a string of type $\beta\alpha^{-1}\lambda$ 
on level $j$ for all $1\le j\le \ell$ relative to the $(j-1)$-tuples $(i_1,\ldots,i_{j-1})$ and 
$(a_1,\ldots,a_{j-1})$.

\item[(ii)]
Let $B$ be a standard representative of a band of type $0$. Then
$B$ is a standard representative of a band of type $0$ on level $1$.
There is a maximal positive integer $\ell$ such that $B$ lies in the equivalence class under
$\sim_{r,(0)}$ of a standard representative of a band
of type $0$ on level $\ell$.

Conversely, if $B$ is a standard representative of a band of type $0$ 
on level $\ell$ relative to the $(\ell-1)$-tuples $(i_1,\ldots,i_{\ell-1})$ and $(a_1,\ldots,a_{\ell-1})$,
then $B$ is a standard representative of a band of type $0$ 
on level $j$ for all $1\le j\le \ell$ relative to the $(j-1)$-tuples $(i_1,\ldots,i_{j-1})$ and 
$(a_1,\ldots,a_{j-1})$.
\end{enumerate}
\end{rem}

\begin{prop}
\label{prop:string0+}
Let $S=\beta\alpha^{-1}\underline{z}_0 \cdots \underline{z}_{n-1}\,\lambda$
be the standard representative of a string of type 
$\beta\alpha^{-1}\lambda$ as in Definition $\ref{def:important1}$.
Then $S$ has property $(+)$ from Lemma $\ref{lem:needthis}$ if and only if
there exist a positive integer $\ell$ and a choice of $(\ell-1)$-tuples 
$(i_1,\ldots, i_{\ell-1})$ and $(a_1,\ldots,a_{\ell-1})$ as in Definition $\ref{def:important2}(v)$
such that
\begin{equation}
\label{eq:formstring0}
S=\underline{b}^{(\ell)}\,\underline{l}^{(\ell)} \;\mbox{ or }\; S=\underline{b}^{(\ell)}\,(\underline{x}^{(\ell)})^{a_\ell}\,\underline{l}^{(\ell)} \;\mbox{ or } \;
S=\underline{b}^{(\ell)}\,(\underline{y}^{(\ell)})^{a_\ell}\,\underline{l}^{(\ell)}
\end{equation}
for some positive integer $a_\ell$.
\end{prop}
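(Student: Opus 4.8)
The plan is to establish both implications together, by induction on the type-$0$-length $n$ of $S$. Throughout, Lemma~\ref{lem:needthis} lets one read $(+)$ as a purely combinatorial condition on the placement of the letters $\underline{x},\underline{y}$ inside the word $\beta\alpha^{-1}\underline{z}_0\cdots \underline{z}_{n-1}\lambda$: say that an occurrence of a subpattern $U$ inside $Z=\underline{z}_0\cdots \underline{z}_{n-1}$ is an \emph{$\underline{x}$-context} if its left neighbour lies in $\{\text{start},\underline{x}\}$, its right neighbour in $\{\underline{x},\text{end}\}$, and the two neighbours are not both boundaries, and define an \emph{$\underline{y}$-context} symmetrically; then $(+)$ asserts precisely that no subpattern of $S$ has both an $\underline{x}$-context and an $\underline{y}$-context. (One uses here that a word of type $0$ decomposes uniquely into letters $\underline{x},\underline{y}$, since these are distinguished by their second symbol.) If $Z=1_0$ or $Z=\underline{x}^{\,n}$ or $Z=\underline{y}^{\,n}$, then only one of the two letters occurs, so $(+)$ holds automatically and $S$ is visibly of the form~(\ref{eq:formstring0}) on level $1$ (with empty middle when $n=0$); this settles $n\le 1$ and all ``non-mixed'' cases.

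Now suppose $n\ge 2$ and that $Z$ contains both letters. Testing $(+)$ against $U=1_0$ shows that $Z$ cannot simultaneously satisfy one of ``begins with $\underline{x}$'', ``ends with $\underline{x}$'', ``contains $\underline{x}\underline{x}$'' and one of the three analogous statements with $\underline{y}$; since $Z\ne 1_0$, exactly one of these two alternatives fails, and after applying $\Xi$ if necessary (which preserves $(+)$ by Remark~\ref{rem:string0nuB}, and preserves the property of being of the form~(\ref{eq:formstring0}) by the $\underline{x}\leftrightarrow\underline{y}$ symmetry built into Definition~\ref{def:important2}) we may assume that $Z$ begins and ends with $\underline{x}$ and has no factor $\underline{y}\underline{y}$. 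Write $Z=\underline{x}^{c_0}\underline{y}\,\underline{x}^{c_1}\underline{y}\cdots \underline{y}\,\underline{x}^{c_m}$ with $m\ge 1$ and all $c_i\ge 1$, and set $a_1=\min_i c_i$. Testing $(+)$ against the subpatterns $U=\underline{x}^{\,j}$ then forces $c_0=c_m=a_1$ and $c_i\in\{a_1,a_1+1\}$ for every $i$: for instance, if $c_0>a_1$, or if some $c_i\ge a_1+2$, one produces both an occurrence of $\underline{x}^{a_1}$ with an $\underline{x}$-context (inside the first block, respectively deep inside a block of length $\ge a_1+2$) and an occurrence of $\underline{x}^{a_1}$ with an $\underline{y}$-context (a block of minimal length, respectively the first block immediately followed by $\underline{y}$), contradicting $(+)$. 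Grouping each $\underline{x}$-block with the $\underline{y}$ following it then rewrites $S$ as $S=\underline{b}^{(2)}Z'\underline{l}^{(2)}$, where $\underline{b}^{(2)}=\beta\alpha^{-1}\underline{x}^{a_1}\underline{y}$ and $\underline{l}^{(2)}=\underline{x}^{a_1}\lambda$ are the level-$2$ data of Definition~\ref{def:important2} for the choice $i_1=0$ and this $a_1$, and $Z'=(\underline{x}^{c_1}\underline{y})\cdots(\underline{x}^{c_{m-1}}\underline{y})$ is a word of type $0$ on level $2$ of type-$0$-length $m-1<n$.

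The heart of the argument is a \emph{reduction lemma}: for any $S=\underline{b}^{(2)}Z'\underline{l}^{(2)}$ with $Z'$ a word of type $0$ on level $2$ relative to $(i_1)=(0)$ and $(a_1)$, the string $S$ has $(+)$ if and only if the level-$1$ string $\hat{S}=\beta\alpha^{-1}\hat{Z'}\lambda$ does, where $\hat{Z'}$ is obtained from $Z'$ by replacing $\underline{x}^{(2)}$ by $\underline{x}$ and $\underline{y}^{(2)}$ by $\underline{y}$. Granting it, the ``only if'' part of the proposition follows by induction: $S$ has $(+)$, hence so does $\hat{S}$, which has type-$0$-length $m-1<n$ and is therefore of the form~(\ref{eq:formstring0}) on some level $\ell'$; re-reading $S$ as built from $\underline{x}^{(2)},\underline{y}^{(2)}$ and prepending $i_1$ (namely $0$, or $1$ if $\Xi$ was applied) together with $a_1$ to the parameter tuples of $\hat{S}$ exhibits $S$ in the form~(\ref{eq:formstring0}) on level $\ell'+1$. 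For the ``if'' part, suppose $S$ is of the form~(\ref{eq:formstring0}) on level $\ell$; the case $\ell=1$ is among the non-mixed cases above, while for $\ell\ge 2$ one reads off from Definition~\ref{def:important2} that $\underline{b}^{(2)}$ is a prefix of $\underline{b}^{(\ell)}$, that $\underline{l}^{(2)}$ is a suffix of $\underline{l}^{(\ell)}$, and that the remaining middle factor is again a word of type $0$ on level $2$; hence $S=\underline{b}^{(2)}Z'\underline{l}^{(2)}$, the contracted string $\hat{S}$ is of the form~(\ref{eq:formstring0}) on level $\ell-1$ with smaller type-$0$-length, so $\hat{S}$ has $(+)$ by the inductive hypothesis, and the reduction lemma yields $(+)$ for $S$.

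To prove the reduction lemma, one notes that the forced block structure makes every all-$\underline{x}$ subpattern of $S$, the subpattern $U=\underline{y}$ (always flanked by $\underline{x}$'s, hence with no $\underline{y}$-context), and $U=1_0$ harmless automatically; and a subpattern $U$ of $S$ involving both letters either has no context at all — hence is harmless — or, after deleting its leading and trailing runs of $\underline{x}$'s, has a ``core'' which is a union of complete level-$2$ letters, i.e.\ a subpattern of $\hat{S}$, and one checks that $U$ has an $\underline{x}$-context in $S$ exactly when the level-$2$ letter of $S$ adjacent to that core is an $\underline{x}^{(2)}$, equivalently when the core has an $\underline{x}$-context in $\hat{S}$, and symmetrically for $\underline{y}$-contexts. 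I expect this last verification to be the main obstacle: it requires tracking carefully the partial $\underline{x}$-blocks at the two ends of a mixed subpattern, as well as the asymmetry that $\underline{b}^{(2)}=\beta\alpha^{-1}\underline{x}^{a_1}\underline{y}$ ends in a complete copy of $\underline{x}^{(2)}$ whereas $\underline{l}^{(2)}=\underline{x}^{a_1}\lambda$ carries only the $\underline{x}$-part of an $\underline{x}^{(2)}$, so that the left and right ends of $\hat{S}$ produce contexts from slightly different configurations in $S$. The derivation of the block structure from $(+)$ used above is a shorter but similarly delicate case analysis.
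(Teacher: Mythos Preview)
Your approach is essentially the paper's: both arguments rest on a one-step reduction showing that property $(+)$ for $S$ (read on level $1$) is equivalent to the same property read on level $2$, and then iterate. The paper packages this as a single ``Claim 1'' asserting $(+)\Leftrightarrow(+)_\ell$ for any level $\ell$ on which $S$ can be written, which makes the ``if'' direction immediate (the forms in \eqref{eq:formstring0} trivially satisfy $(+)_\ell$), whereas you run a separate induction for each implication via your reduction lemma; the content is the same.

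Two small corrections to your sketch of the reduction lemma. First, the ``core'' you obtain by stripping the leading and trailing $\underline{x}$-runs from a mixed $U$ begins and ends with $\underline{y}$, so it is not literally a concatenation of level-$2$ letters $\underline{x}^{a_1}\underline{y}$, $\underline{x}^{a_1+1}\underline{y}$; what you actually want (and what the paper uses) is that when $U$ has \emph{both} contexts one is forced to $p=q=a_1$, so $U=\underline{x}^{a_1}\underline{y}\,\tilde{U}\,\underline{x}^{a_1}$ with $\tilde{U}$ a genuine level-$2$ subpattern. Second, the correspondence of contexts goes the other way: an $\underline{x}$-context for $U$ on level $1$ (left neighbour $\underline{x}$ or start) forces the adjacent level-$2$ letter to be $\underline{y}^{(2)}=\underline{x}^{a_1+1}\underline{y}$, hence gives $\tilde{U}$ a $\underline{y}$-context in $\hat S$, and symmetrically. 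This swap is harmless for the lemma itself, since $(+)$ only concerns the \emph{pair} of contexts, but your stated bijection ``$\underline{x}$-context $\leftrightarrow$ $\underline{x}$-context'' is not what the computation gives.
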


\begin{proof}
Let $\ell$ be a positive integer, let $(i_1,\ldots, i_{\ell-1})$ and $(a_1,\ldots,a_{\ell-1})$ be
$(\ell-1)$-tuples as in Definition \ref{def:important2}(v), and let
$S$ be the standard representative of a string of type 
$\beta\alpha^{-1}\lambda$ on level $\ell$ relative to these $(\ell-1)$-tuples. 
As stated in Remark \ref{rem:easy}(i),
$S$ is then  also the standard representative of 
a string of type $\beta\alpha^{-1}\lambda$ on level $j$
for all $1\le j\le \ell$ relative to the $(j-1)$-tuples $(i_1,\ldots, i_{j-1})$ and $(a_1,\ldots,a_{j-1})$.
Let $j\in\{1,\ldots,\ell\}$. Then
$S=\underline{b}^{(j)}\,\underline{z}^{(j)}_0\cdots \underline{z}^{(j)}_{n_j-1}\,\underline{l}^{(j)}$,
where  $\underline{z}^{(j)}_i\in\{\underline{x}^{(j)},\underline{y}^{(j)}\}$ for all $i\in\{0,\ldots,n_j-1\}$. 
Let $\underline{c}^{(j)}=\underline{z}^{(j)}_0$ and let $\underline{d}^{(j)}$ be such that
$\{\underline{c}^{(j)},\underline{d}^{(j)}\}=\{\underline{x}^{(j)},\underline{y}^{(j)}\}$.
In particular, for $1\le j<\ell$, $\underline{x}^{(j+1)}=(\underline{c}^{(j)})^{a_j}\underline{d}^{(j)}$
and $\underline{y}^{(j+1)}=(\underline{c}^{(j)})^{a_j+1}\underline{d}^{(j)}$, and
$\underline{b}^{(j+1)}=\underline{b}^{(j)}\,(\underline{c}^{(j)})^{a_j}\underline{d}^{(j)}$ and 
$\underline{l}^{(j+1)}=(\underline{c}^{(j)})^{a_j}\underline{l}^{(j)}$.

For $1\le j\le \ell$, consider the following property:
\begin{enumerate}
\item[$(+)_j$] 
If there are subpatterns $U,A_1,A_2$ of $S$ on level $j$ such that
$$S=\underline{b}^{(j)} \,U\, \underline{c}^{(j)}A_2\,\underline{l}^{(j)} \;\mbox{ or } \;
S=\underline{b}^{(j)}A_1\,\underline{c}^{(j)}\,U\,\underline{c}^{(j)}A_2\,\underline{l}^{(j)} \;\mbox{ or }\;
S=\underline{b}^{(j)}A_1\,\underline{c}^{(j)}\,U\,\underline{l}^{(j)},$$ then 
for all subpatterns $B_1,B_2$ of $S$ on level $j$, we have
$$S\neq \underline{b}^{(j)}\, U\, \underline{d}^{(j)}B_2\,\underline{l}^{(j)}\;\mbox{ and }\;
S\neq \underline{b}^{(j)}B_1\,\underline{d}^{(j)}\,U\,\underline{d}^{(j)} B_2\,\underline{l}^{(j)} 
\;\mbox{ and }\; S\neq \underline{b}^{(j)}B_1\,\underline{d}^{(j)}\,U\,\underline{l}^{(j)}.$$
\end{enumerate}

\medskip

\noindent\textit{Claim $1$.} Let $S$ be the standard representative of a string of type 
$\beta\alpha^{-1}\lambda$ on level $\ell$ as in the 
first paragraph of the proof. 
Then $S$ has property $(+)$ from Lemma \ref{lem:needthis} if and only if 
$S$ has property $(+)_\ell$.

\medskip

\noindent
\textit{Proof of Claim $1$.}
Note that $(+)_1$ is the same as $(+)$. Moreover, $S$ does not have property $(+)_j$
for some $1\le j\le\ell$ if and only if there exist subpatterns $U,A_1,A_2,B_1,B_2$ of $S$ on level $j$ 
satisfying $(\ast)_j$ and $(\ast\ast)_j$, where
\begin{enumerate}
\item[$(\ast)_j$] $S=\underline{b}^{(j)} \,U\, \underline{c}^{(j)}A_2\,\underline{l}^{(j)}$ or 
$S=\underline{b}^{(j)}A_1\,\underline{c}^{(j)}\,U\,\underline{c}^{(j)}A_2\,\underline{l}^{(j)}$ or 
$S=\underline{b}^{(j)}A_1\,\underline{c}^{(j)}\,U\,\underline{l}^{(j)}$, 
\item[$(\ast\ast)_j$]  $S=\underline{b}^{(j)}\, U\, \underline{d}^{(j)}B_2\,\underline{l}^{(j)}$ or
$S=\underline{b}^{(j)}B_1\,\underline{d}^{(j)}\,U\,\underline{d}^{(j)} B_2\,\underline{l}^{(j)}$ or
$S=\underline{b}^{(j)}B_1\,\underline{d}^{(j)}\,U\,\underline{l}^{(j)}$. 
\end{enumerate}
Suppose that $S$ does not have property $(+_j)$, i.e. $S$ satisfies $(\ast)_j$ and $(\ast\ast)_j$.

If $j<\ell$, then $S$ is also the standard representative of a string of type 
$\beta\alpha^{-1}\lambda$ on level $(j+1)$. Therefore, it follows
that the subpattern $U$ in $(\ast)_j$ and $(\ast\ast)_j$ has the form
$U=(\underline{c}^{(j)})^{a_j}\underline{d}^{(j)}\,\tilde{U}\,(\underline{c}^{(j)})^{a_j}$ for some 
subpattern $\tilde{U}$ of $S$ on level $(j+1)$. 
Since 
$\underline{b}^{(j+1)}=\underline{b}^{(j)}\,(\underline{c}^{(j)})^{a_j}\underline{d}^{(j)}$,
$\underline{l}^{(j+1)}=(\underline{c}^{(j)})^{a_j}\underline{l}^{(j)}$,
$\underline{x}^{(j+1)}=(\underline{c}^{(j)})^{a_j}\underline{d}^{(j)}$
and $\underline{y}^{(j+1)}=(\underline{c}^{(j)})^{a_j+1}\underline{d}^{(j)}$, it follows that 
$S$ satisfies $(\ast)_{j+1}$ and $(\ast\ast)_{j+1}$ for appropriate subpatterns of $S$ on level $(j+1)$.

If $j>1$, then $S$ is also the standard representative of a string of type 
$\beta\alpha^{-1}\lambda$ on level $(j-1)$.
We have 
$\underline{b}^{(j)}=\underline{b}^{(j-1)}\,(\underline{c}^{(j-1)})^{a_{j-1}}\underline{d}^{(j-1)}$ and 
$\underline{l}^{(j)}=(\underline{c}^{(j-1)})^{a_{j-1}}\underline{l}^{(j-1)}$.
Moreover, $\underline{c}^{(j)}=(\underline{c}^{(j-1)})^{\xi}\underline{d}^{(j-1)}$
and $\underline{d}^{(j)}=(\underline{c}^{(j-1)})^{\zeta}\underline{d}^{(j-1)}$, where
$\{\xi,\zeta\}=\{a_{j-1},a_{j-1}+1\}$.
Using this to rewrite $(\ast)_j$ and $(\ast\ast)_j$ on level $(j-1)$, it follows that
$S$ satisfies $(\ast)_{j-1}$ and $(\ast\ast)_{j-1}$ for appropriate subpatterns of $S$ on level $(j-1)$.
Therefore, Claim 1 follows by induction.

\medskip

Let now $S=\beta\alpha^{-1}\underline{z}_0 \cdots \underline{z}_{n-1}\,\lambda$
be the standard representative of a string of type 
$\beta\alpha^{-1}\lambda$, i.e. $S$ is the standard representative of a string of type 
$\beta\alpha^{-1}\lambda$ on level $1$, and suppose
$S$ has property $(+)$ from Lemma \ref{lem:needthis}. 
Using the notation from the first paragraph of the proof, it follows that
$S=\underline{b}^{(1)}\,\underline{l}^{(1)}$, or 
$S=\underline{b}^{(1)}\,(\underline{c}^{(1)})^{a_1}\underline{l}^{(1)}$ for some positive integer
$a_1$, or
$$S=\underline{b}^{(1)}\,(\underline{c}^{(1)})^{s_1}(\underline{d}^{(1)})^{t_1}\,\cdots\,
(\underline{c}^{(1)})^{s_r}(\underline{d}^{(1)})^{t_r}(\underline{c}^{(1)})^{s_{r+1}}\,
\underline{l}^{(1)}$$
for some positive integers $r$, $s_1,\ldots,s_{r+1}$ and $t_1,\ldots,t_r$ with
$s_i=s_{r+2-i}$ and $t_j=t_{r+1-j}$ for all $i,j$ by Remark \ref{rem:string0nuB}. 
Since $S$ has property $(+)$, it cannot contain
both subpatterns $\underline{b}^{(1)}\,\underline{c}^{(1)}$ and 
$\underline{d}^{(1)}\,\underline{d}^{(1)}$. Therefore, $t_j=1$ for all $j$. 
Because $S$ cannot contain both subpatterns 
$\underline{b}^{(1)}\,(\underline{c}^{(1)})^{s_i}\underline{c}^{(1)}$ and
$\underline{d}^{(1)}\,(\underline{c}^{(1)})^{s_i}\underline{d}^{(1)}$
(resp. $S$ cannot contain both subpatterns 
$\underline{b}^{(1)}\,(\underline{c}^{(1)})^{s_1}\underline{d}^{(1)}$ and
$\underline{c}^{(1)}\,(\underline{c}^{(1)})^{s_1}\underline{c}^{(1)}$), 
we obtain $s_1\le s_i\le s_1+1$ for all $i$.
Therefore,
\begin{equation}
\label{eq:S0level1}
S=\underline{b}^{(1)}\,(\underline{c}^{(1)})^{s_1}\,\underline{d}^{(1)}\,\cdots\,
(\underline{c}^{(1)})^{s_r}\,\underline{d}^{(1)}\,(\underline{c}^{(1)})^{s_{r+1}}\,
\underline{l}^{(1)}
\end{equation}
for some positive integers $r$, $s_1,\ldots,s_{r+1}$ with $s_i\in\{s_1,s_1+1\}$
and $s_i=s_{r+2-i}$ for all $i$. Letting $a_1=s_1$, we obtain that
$$S=\underline{b}^{(1)}\,(\underline{c}^{(1)})^{a_1}\,\underline{d}^{(1)}\,
\underline{\upsilon}_0\cdots \underline{\upsilon}_{n_2-1}\,(\underline{c}^{(1)})^{a_1}\,\underline{l}^{(1)}$$
where $n_2=r-1\ge 0$ and
 $\underline{\upsilon}_i\in\{(\underline{c}^{(1)})^{a_1}\,\underline{d}^{(1)}, 
(\underline{c}^{(1)})^{a_1+1}\,\underline{d}^{(1)}\}$ for all $i\in\{0,\ldots,n_2-1\}$.  
Letting $i_1=0$ (resp. $i_1=1$) if $\underline{c}^{(1)}=\underline{x}$ (resp. 
$\underline{c}^{(1)}=\underline{y}$), we obtain
\begin{equation}
\label{eq:S0level2}
S=\underline{b}^{(2)}\,\underline{z}^{(2)}_0\cdots \underline{z}^{(2)}_{n_2-1}\,\,\underline{l}^{(2)}
\end{equation}
where $n_2\ge 0$ and 
$\underline{z}^{(2)}_i\in\{\underline{x}^{(2)},\underline{y}^{(2)}\}$ for all $i\in\{0,\ldots,n_2-1\}$.
If $n_2\ge 1$, let $\underline{c}^{(2)}=\underline{z}^{(2)}_0$ and choose $\underline{d}^{(2)}$
such that $\{\underline{c}^{(2)},\underline{d}^{(2)}\}=\{\underline{x}^{(2)},\underline{y}^{(2)}\}$.
Since $S$ satisfies $(+)$, it also satisfies $(+)_2$ by Claim 1. Therefore, 
$S=\underline{b}^{(2)}\,\underline{l}^{(2)}$, or 
$S=\underline{b}^{(2)}\,(\underline{c}^{(2)})^{a_2}\underline{l}^{(2)}$ for some positive integer
$a_2$, or
$$S=\underline{b}^{(2)}\,(\underline{c}^{(2)})^{u_1}(\underline{d}^{(2)})^{v_1}\,\cdots\,
(\underline{c}^{(2)})^{u_w}(\underline{d}^{(2)})^{v_w}(\underline{c}^{(2)})^{u_{w+1}}\,
\underline{l}^{(2)}$$
for some positive integers $w$, $u_1,\ldots,u_{w+1}$ and $v_1,\ldots,v_w$ with
$u_i=u_{w+2-i}$ and $v_j=v_{w+1-j}$ for all $i,j$. Using induction, it follows 
that there exist a positive integer $\ell$ and a choice of $(\ell-1)$-tuples 
$(i_1,\ldots, i_{\ell-1})$ and $(a_1,\ldots,a_{\ell-1})$ as in Definition \ref{def:important2}(v)
such that $S$ has one of the forms in (\ref{eq:formstring0}).

Conversely, suppose that $S$ is the standard representative of a string of type 
$\beta\alpha^{-1}\lambda$ on level $\ell$ having one of 
the forms in (\ref{eq:formstring0}). Using the notation from the first paragraph of the proof, 
this means that $S=\underline{b}^{(\ell)}\,\underline{l}^{(\ell)}$, or 
$S=\underline{b}^{(\ell)}\,(\underline{c}^{(\ell)})^{a_\ell}\underline{l}^{(\ell)}$ for some positive integer
$a_\ell$. Hence $S$ obviously has property $(+)_\ell$. By Claim 1, it follows that $S$ has
property $(+)$ from Lemma \ref{lem:needthis}. This completes the proof of Proposition \ref{prop:string0+}.
\end{proof}

\begin{prop}
\label{prop:bandstype0}
Let $B=\underline{z}_0\, \underline{z}_1\,\cdots \,\underline{z}_{n-1}$ be a standard 
representative of a band of type $0$ as in Definition $\ref{def:standardband}$. 
Then $B$ has property $(++)$ from Lemma $\ref{lem:needthisforbands}$
if and only if
there exist a positive integer $\ell$ and a choice of $(\ell-1)$-tuples
$(i_1,\ldots, i_{\ell-1})$ and  $(a_1,\ldots,a_{\ell-1})$ as in Definition $\ref{def:important2}(vi)$
such that 
\begin{equation}
\label{eq:band0+}
B\sim_{r,(0)} \underline{x}^{(\ell)}\; \mbox{ or }\; B\sim_{r,(0)} \underline{y}^{(\ell)}.
\end{equation}
Moreover, if $B$ has property $(++)$, then 
$B$ allows exactly one wrap-around, as defined in Definition $\ref{def:standardband}(iv)$.
\end{prop}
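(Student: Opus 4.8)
The plan is to follow closely the structure of the proof of Proposition \ref{prop:string0+}, the only real change being that, since a band has no end‐anchors playing the role of $\underline{b}^{(j)},\underline{l}^{(j)}$, everything is carried out modulo the rotation equivalences $\sim_{r,(0,j)}$. First I would introduce, for each $1\le j\le \ell$ and each standard representative $B$ of a band of type $0$ on level $j$, the condition $(++)_j$: \emph{if there are rotated subpatterns $U,V$ of $B$ on level $j$ with $B\sim_{r,(0,j)}\underline{x}^{(j)}\,U\,\underline{x}^{(j)}\,V$, then for every rotated subpattern $W$ on level $j$ one has $B\not\sim_{r,(0,j)}\underline{y}^{(j)}\,U\,\underline{y}^{(j)}\,W$}; here $(++)_1$ is $(++)$. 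Exactly as in ``Claim $1$'' of Proposition \ref{prop:string0+}, I would show that for $B$ a standard representative of a band of type $0$ on level $\ell$ (hence, by Remark \ref{rem:easy}(ii), on every level $1\le j\le\ell$) the failure of $(++)_j$ is equivalent to the failure of $(++)_{j+1}$, using $\underline{x}^{(j+1)}=(\underline{c}^{(j)})^{a_j}\underline{d}^{(j)}$ and $\underline{y}^{(j+1)}=(\underline{c}^{(j)})^{a_j+1}\underline{d}^{(j)}$: a pair of rotated occurrences $\underline{x}^{(j)}\,U\,\underline{x}^{(j)}$ and $\underline{y}^{(j)}\,U\,\underline{y}^{(j)}$ in $B$ forces $U$ to have the shape $(\underline{c}^{(j)})^{a_j}\underline{d}^{(j)}\,\tilde{U}\,(\underline{c}^{(j)})^{a_j}$ for a rotated subpattern $\tilde{U}$ on level $j+1$, and rewriting then produces the corresponding pair at level $j+1$, and conversely. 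By induction this gives $(++)\iff(++)_\ell$.

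For the forward implication I would peel off one level at a time. Suppose $B$ satisfies $(++)=(++)_1$. If its type-$0$-length is $1$ then $B\sim_{r,(0)}\underline{x}=\underline{x}^{(1)}$ or $\underline{y}=\underline{y}^{(1)}$ and we are done with $\ell=1$. If it is $\ge 2$, then $B$, being a band, is not a proper power, so it contains both $\underline{x}$ and $\underline{y}$; applying $(++)_1$ with $U=1_0$ shows $B$ cannot contain both $\underline{x}\,\underline{x}$ and $\underline{y}\,\underline{y}$, say it does not contain $\underline{y}\,\underline{y}$. Then $B\sim_{r,(0)}\underline{x}^{s_1}\underline{y}\cdots\underline{x}^{s_r}\underline{y}$, and applying $(++)_1$ with $U=\underline{x}^{m}$, $m=\min_i s_i$, forces $s_i\in\{m,m+1\}$ for all $i$ (a run of length $\ge m+2$ and a run of length exactly $m$ would give the forbidden pair with $U=\underline{x}^m$). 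Rotating to a block boundary then exhibits $B$ as a standard representative of a band of type $0$ on level $2$ relative to $i_1=0$, $a_1=m$, of strictly smaller level-$2$-length $r$. By the first paragraph and rotation invariance of $(++)_j$ this level-$2$ band satisfies $(++)_2$, so we iterate; as the length strictly decreases the process terminates, producing the $(\ell-1)$-tuples with $B$ of one of the forms in (\ref{eq:band0+}). The converse is immediate: if $B\sim_{r,(0)}\underline{x}^{(\ell)}$ or $\underline{y}^{(\ell)}$ then, viewed on level $\ell$, $B$ has type-$0$-length on level $\ell$ equal to $1$, so the hypothesis of $(++)_\ell$ (which needs a level-$\ell$ pattern of length at least $2$) is vacuous, whence $(++)_\ell$, and by the above also $(++)$, holds.

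For the final assertion, Remark \ref{rem:maybe}(i) already gives at most one wrap-around, so it remains to show $B$ allows at least one. Since ``$B$ allows a wrap-around'' is $\sim_{r,(0)}$-invariant and equivalent to $B\sim_{r,(0)}B^-$, by the classification just proved it suffices to check $\underline{x}^{(\ell)}\sim_{r,(0)}(\underline{x}^{(\ell)})^-$ and $\underline{y}^{(\ell)}\sim_{r,(0)}(\underline{y}^{(\ell)})^-$. I would prove this by induction on $\ell$, the case $\ell=1$ being trivial; for the step I would rewrite $(\underline{x}^{(\ell)})^-=(\underline{d}^{(\ell-1)})^-\big((\underline{c}^{(\ell-1)})^-\big)^{a_{\ell-1}}$ and use the inductive shape of $\underline{x}^{(\ell-1)},\underline{y}^{(\ell-1)}$ — each being a run of one level-$(\ell-2)$ letter followed by a single copy of the other — to see that $\underline{x}^{(\ell)}$ and $(\underline{x}^{(\ell)})^-$ carry the same cyclic ``run profile'' (all but one of the runs of equal length, separated by single letters), a profile that is automatically reflection-symmetric, so that the two necklaces coincide; the same for $\underline{y}^{(\ell)}$.

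I expect this last piece of bookkeeping — in essence the statement that $\underline{x}^{(\ell)},\underline{y}^{(\ell)}$ are Christoffel-type words whose conjugacy classes are closed under reversal, together with the choice of the right inductive invariant to record it — to be the main technical obstacle. The level-shifting argument of the first two paragraphs is routine, being a direct transcription of the corresponding steps for Proposition \ref{prop:string0+}, but it is somewhat lengthy to write out in full.
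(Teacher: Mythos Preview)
Your treatment of the main equivalence is essentially the paper's proof: you set up the level-$j$ property $(++)_j$, prove the analogue of Claim~1 by pushing a failure of $(++)_j$ up or down one level using $\underline{x}^{(j+1)}=(\underline{c}^{(j)})^{a_j}\underline{d}^{(j)}$ and $\underline{y}^{(j+1)}=(\underline{c}^{(j)})^{a_j+1}\underline{d}^{(j)}$, and then peel off levels exactly as in Proposition~\ref{prop:string0+}. The only cosmetic difference is that the paper phrases $(++)_j$ in terms of $\underline{c}^{(j)},\underline{d}^{(j)}$ rather than $\underline{x}^{(j)},\underline{y}^{(j)}$; since $\{\underline{c}^{(j)},\underline{d}^{(j)}\}=\{\underline{x}^{(j)},\underline{y}^{(j)}\}$ and the condition is symmetric in the two letters, this is immaterial.

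For the wrap-around assertion your route diverges from the paper's, and the sketch has a genuine gap. You correctly reduce existence of a wrap-around to $B\sim_{r,(0)}B^-$, and then try to show $\underline{x}^{(\ell)}\sim_{r,(0)}(\underline{x}^{(\ell)})^-$ by comparing cyclic ``run profiles'' in the level-$(\ell-2)$ alphabet. The problem is that $(\,\cdot\,)^-$ is reversal of the \emph{level-$1$} letter sequence, so when you write
\[
(\underline{x}^{(\ell)})^- \;=\; (\underline{d}^{(\ell-1)})^-\bigl((\underline{c}^{(\ell-1)})^-\bigr)^{a_{\ell-1}}
\;=\; (\underline{d}^{(\ell-2)})^-\bigl((\underline{c}^{(\ell-2)})^-\bigr)^{q}\,\Bigl[(\underline{d}^{(\ell-2)})^-\bigl((\underline{c}^{(\ell-2)})^-\bigr)^{p}\Bigr]^{a_{\ell-1}},
\]
the blocks appearing are $(\underline{c}^{(\ell-2)})^-$ and $(\underline{d}^{(\ell-2)})^-$, not $\underline{c}^{(\ell-2)}$ and $\underline{d}^{(\ell-2)}$. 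So even though the abstract run profile $(p,\ldots,p,q)$ is cyclically reflection-symmetric, the two necklaces you are comparing live over different two-letter alphabets, and matching the profiles does not by itself give $\sim_{r,(0)}$-equivalence. Your induction hypothesis only says each block is \emph{cyclically} equivalent to its reversal, which is of no help once the blocks are concatenated. This is more than bookkeeping: to close the gap you would need a stronger inductive statement relating $(\underline{x}^{(j)})^-,(\underline{y}^{(j)})^-$ to $\underline{x}^{(j)},\underline{y}^{(j)}$ as \emph{linear} words (for instance a Christoffel-type palindrome identity), not merely as necklaces.

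The paper sidesteps this entirely. Instead of proving $B\sim_{r,(0)}B^-$, it does a downward induction on the level showing directly that a wrap-around on level $j$ descends to one on level $j{-}1$, with an explicit formula for the new index: if the unique wrap-around on level $j$ is at $w$, and one rewrites $B$ on level $j{-}1$ as runs $(\underline{c}^{(j-1)})^{u_t}\underline{d}^{(j-1)}$, then the level-$(j{-}1)$ wrap-around is at $\tilde{w}=(u_0{+}1)+\cdots+(u_{w-1}{+}1)+u_w-1$. Uniqueness on level $j{-}1$ is then forced because any level-$(j{-}1)$ wrap-around must sit at the end of some run (from the shape of the blocks), hence comes from a level-$j$ wrap-around, which is unique. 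Starting from level~$\ell$, where $B$ has type-$0$-length~$1$ and trivially allows a wrap-around at~$0$, this gives both existence and uniqueness at level~$1$ in one stroke. If you want to keep your framing via $B\sim_{r,(0)}B^-$, the cleanest fix is to replace the run-profile argument by exactly this level-descent of the wrap-around index.
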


\begin{proof}
Let $\ell$ be a positive integer, let $(i_1,\ldots, i_{\ell-1})$ and $(a_1,\ldots,a_{\ell-1})$ be
$(\ell-1)$-tuples as in Definition \ref{def:important2}(vi), and let
$B$ be a standard representative of a band of type $0$ on level $\ell$ relative to these $(\ell-1)$-tuples.
As stated in Remark \ref{rem:easy}(ii), $B$ is then
also a standard representative of a band of type $0$ on level $j$
for all $1\le j\le\ell$ relative to the $(j-1)$-tuples $(i_1,\ldots, i_{j-1})$ and $(a_1,\ldots,a_{j-1})$.
Let $j\in\{1,\ldots,\ell\}$. Then
$B=\underline{z}^{(j)}_0\cdots \underline{z}^{(j)}_{n_j-1}$,
where  $\underline{z}^{(j)}_i\in\{\underline{x}^{(j)},\underline{y}^{(j)}\}$ for all $i\in\{0,\ldots,n_j-1\}$. 
For $1\le j<\ell$, define $\underline{c}^{(j)},\underline{d}^{(j)}\in \{\underline{x}^{(j)},\underline{y}^{(j)}\}$ 
such that $\underline{x}^{(j+1)}=(\underline{c}^{(j)})^{a_j}\underline{d}^{(j)}$
and $\underline{y}^{(j+1)}=(\underline{c}^{(j)})^{a_j+1}\underline{d}^{(j)}$.

For $1\le j\le \ell$, consider the following property:
\begin{enumerate}
\item[$(++)_j$] 
If there are rotated subpatterns $U,V$ of $B$ on level $j$ such that
$B\sim_{r,(0,j)} \underline{c}^{(j)}\,U\,\underline{c}^{(j)}\,V$, then for all
rotated subpatterns $W$ of $B$ on level $j$, we have 
$B\not\sim_{r,(0,j)} \underline{d}^{(j)}\,U\,\underline{d}^{(j)}\, W$. 
\end{enumerate}

\medskip

\noindent\textit{Claim $1$.} Let $B$ be a standard representative of a band of type $0$ on level 
$\ell$ as in the first paragraph of the proof. 
Then $B$ has property $(++)$ from Lemma \ref{lem:needthisforbands} if and only if 
$B$ has property $(++)_\ell$.

\medskip

\noindent
\textit{Proof of Claim $1$.}
Note that $(++)_1$ is the same as $(++)$. Moreover, $B$ does not have property $(++)_j$
for some $1\le j\le\ell$ if and only if there exist  rotated subpatterns
$U,V,W$ of $B$ on level $j$ such that 
\begin{enumerate}
\item[$(\dagger\dagger)_j$]
$B\sim_{r,(0,j)}\underline{c}^{(j)}\,U\,\underline{c}^{(j)}\,V$ and 
$B\sim_{r,(0,j)} \underline{d}^{(j)}\,U\,\underline{d}^{(j)}\, W$. 
\end{enumerate}
Suppose that $B$ does not have property $(++)_j$, i.e. $B$ satisfies $(\dagger\dagger)_j$.

If $j<\ell$, then $B$ is also a standard representative of a band of type $0$ 
on level $(j+1)$. Therefore, it follows
that the rotated subpattern $U$ in $(\dagger\dagger)_j$  has the form
$U=(\underline{c}^{(j)})^{a_j}\underline{d}^{(j)}\,\tilde{U}\,(\underline{c}^{(j)})^{a_j}$ for some 
rotated subpattern $\tilde{U}$ of $B$ on level $(j+1)$. 
Using that $\underline{x}^{(j+1)}=(\underline{c}^{(j)})^{a_j}\underline{d}^{(j)}$ and
$\underline{y}^{(j+1)}=(\underline{c}^{(j)})^{a_j+1}\underline{d}^{(j)}$, it follows that
$B$ satisfies $(\dagger\dagger)_{j+1}$ for appropriate 
rotated subpatterns of $B$ on level $(j+1)$.

If $j>1$, then $B$ is also a standard representative of a band of type $0$ 
on level $(j-1)$.
We have 
$\underline{c}^{(j)}=(\underline{c}^{(j-1)})^{\xi}\underline{d}^{(j-1)}$
and $\underline{d}^{(j)}=(\underline{c}^{(j-1)})^{\zeta}\underline{d}^{(j-1)}$, where
$\{\xi,\zeta\}=\{a_{j-1},a_{j-1}+1\}$.
Using this to rewrite $(\dagger\dagger)_j$ on level $(j-1)$, it follows that
$B$ satisfies $(\dagger\dagger)_{j-1}$ for appropriate 
rotated subpatterns of $B$ on level $(j-1)$.
Therefore, Claim 1 follows by induction.

\medskip

Let $B$ be a standard representative of a band of type 
$0$ on level $\ell$ having one of the forms in (\ref{eq:band0+}). Then $B$ 
obviously has property $(++)_\ell$. By Claim 1, it follows that $B$ has property $(++)$.

Conversely, suppose that
$B=\underline{z}_0\,\underline{z}_1\, \cdots \,\underline{z}_{n-1}$ is a standard representative 
of a band of type $0$,
i.e. $B$ is a standard representative of a band of type $0$ on level $1$, and suppose $B$ has
property $(++)$. Then $B=\underline{x}^{(1)}$, or $B=\underline{y}^{(1)}$, or 
$$B\sim_{r,(0)}(\underline{x}^{(1)})^{s_1}(\underline{y}^{(1)})^{t_1}\,\cdots\,
(\underline{x}^{(1)})^{s_m}(\underline{y}^{(1)})^{t_m}$$
for some positive integers $m$, $s_1,\ldots,s_m$ and $t_1,\ldots,t_m$.
Since $B$ has property $(++)$, it cannot contain
both rotated subpatterns $\underline{x}^{(1)}\,\underline{x}^{(1)}$ and 
$\underline{y}^{(1)}\,\underline{y}^{(1)}$. Therefore, $s_j=1$ for all $1\le j\le m$ or 
$t_j=1$ for all $1\le j\le m$.
Let $(\underline{c}^{(1)},\underline{d}^{(1)})=(\underline{x}^{(1)},\underline{y}^{(1)})$
and $e_j=s_j$ for $1\le j\le m$ if $t_j=1$ for all $j$, and let
$(\underline{c}^{(1)},\underline{d}^{(1)})=(\underline{y}^{(1)},\underline{x}^{(1)})$
and $e_j=t_j$ for $1\le j\le m$ if $s_j=1$ for all $j$. 
Then
\begin{equation}
\label{eq:B0level1}
B\sim_{r,(0)}(\underline{c}^{(1)})^{e_1}\,\underline{d}^{(1)}\,\cdots\,
(\underline{c}^{(1)})^{e_m}\,\underline{d}^{(1)}
\end{equation}
for some positive integers $m$, $e_1,\ldots,e_m$.
Let $a_1$ be minimal among $e_1,\ldots, e_m$. 
Because $B$ cannot contain both rotated subpatterns 
$\underline{c}^{(1)}\,(\underline{c}^{(1)})^{a_1}\underline{c}^{(1)}$ and
$\underline{d}^{(1)}\,(\underline{c}^{(1)})^{a_1}\underline{d}^{(1)}$ for any $j\in\{1,\ldots,m\}$, 
it follows that $e_j\le a_1+1$ for all $1\le j\le m$.
Thus $e_j\in\{a_1,a_1+1\}$ for all $1\le j\le m$, and we obtain that 
$$B\sim_{r,(0)}\underline{\upsilon}_0\,\underline{\upsilon}_1\cdots \underline{\upsilon}_{n_2-1}$$
where $n_2=m\ge 1$ and
 $\underline{\upsilon}_i\in\{(\underline{c}^{(1)})^{a_1}\,\underline{d}^{(1)}, 
(\underline{c}^{(1)})^{a_1+1}\,\underline{d}^{(1)}\}$ for all $i\in\{0,\ldots,n_2-1\}$.  
Letting $i_1=0$ (resp. $i_1=1$) if $\underline{c}^{(1)}=\underline{x}$ (resp. 
$\underline{c}^{(1)}=\underline{y}$), we obtain
\begin{equation}
\label{eq:B0level2}
B\sim_{r,(0)}\underline{z}^{(2)}_0\,\underline{z}^{(2)}_1\,\cdots \underline{z}^{(2)}_{n_2-1}
\end{equation}
where $n_2\ge 1$ and 
$\underline{z}^{(2)}_i\in\{\underline{x}^{(2)},\underline{y}^{(2)}\}$ for all $i\in\{0,\ldots,n_2-1\}$.
Since $B$ satisfies $(++)$, it follows that 
$\underline{z}^{(2)}_0\,\underline{z}^{(2)}_1\,\cdots \underline{z}^{(2)}_{n_2-1}$ 
satisfies $(++)_2$ by Claim 1. We obtain that
$B\sim_{r,(0)}\underline{x}^{(2)}$, or 
$B\sim_{r,(0)}\underline{y}^{(2)}$, or  
$$B\sim_{r,(0)}(\underline{x}^{(2)})^{u_1}(\underline{y}^{(2)})^{v_1}\,\cdots\,
(\underline{x}^{(2)})^{u_w}(\underline{y}^{(2)})^{v_w}$$
for some positive integers $w$, $u_1,\ldots,u_w$ and $v_1,\ldots,v_w$.
Using induction, it follows 
that there exist a positive integer $\ell$ and a choice of $(\ell-1)$-tuples 
$(i_1,\ldots, i_{\ell-1})$ and $(a_1,\ldots,a_{\ell-1})$ as in Definition \ref{def:important2}(vi)
such that $B$ has one of the forms in (\ref{eq:band0+}).

It remains to show that if $B$ is a standard representative of a band of type $0$ satisfying
condition $(++)$, then $B$ allows exactly one wrap-around. By what we have just proved, we
can assume that $\rho^{(0)}_u(B)=\underline{x}^{(\ell)}$ or $\rho^{(0)}_u(B)=\underline{y}^{(\ell)}$ 
for some $0\le u\le n-1$. Hence $\rho^{(0)}_u(B)$ allows exactly one wrap-around on level $\ell$.
We can rewrite $\rho^{(0)}_u(B)$ as a standard representative of a band of type $0$ on level $j$
for all $1\le j\le \ell$. Let $1< j\le \ell$, and assume by induction that
$\rho^{(0)}_u(B)$ allows exactly one wrap-around on level $j$. 
Using the notation from the first paragraph of the proof, it follows that
\begin{equation}
\label{eq:wraparoundstep}
\rho^{(0)}_u(B) = \underline{z}^{(j)}_0\,\underline{z}^{(j)}_1\,\cdots \underline{z}^{(j)}_{n_j-1}
\end{equation}
where $n_j\ge 1$ and 
$\underline{z}^{(j)}_i\in\{\underline{x}^{(j)},\underline{y}^{(j)}\}$ for all $i\in\{0,\ldots,n_j-1\}$.
Moreover, $\underline{x}^{(j)}=(\underline{c}^{(j-1)})^{a_{j-1}}\underline{d}^{(j-1)}$
and $\underline{y}^{(j)}=(\underline{c}^{(j-1)})^{a_{j-1}+1}\underline{d}^{(j-1)}$.
Therefore,
\begin{eqnarray}
\label{eq:wraparoundstep2}
\rho^{(0)}_u(B) &=& (\underline{c}^{(j-1)})^{u_0}\underline{d}^{(j-1)}\,
(\underline{c}^{(j-1)})^{u_1}\underline{d}^{(j-1)}
\,\cdots (\underline{c}^{(j-1)})^{u_{n_j-1}}\underline{d}^{(j-1)}\\
&=&  \underline{z}^{(j-1)}_0\,\underline{z}^{(j-1)}_1\,\cdots \underline{z}^{(j-1)}_{n_{j-1}-1} \nonumber
\end{eqnarray}
where $u_0,\ldots,u_{n_j-1}\in\{a_{j-1},a_{j-1}+1\}$ and
$n_{j-1}=(u_0+1)+\cdots +(u_{n_j-1}+1) = u_0+\cdots + u_{n_j-1}+n_j $.
Assume
that the unique wrap-around of $\rho^{(0)}_u(B)$ on level $j$
is at $w$, so $\underline{z}_t^{(j)} = \underline{z}_{w-t}^{(j)}$ for all $t$, where the
indices are taken modulo $n_j$.
Let $\tilde{w}=(u_0+1)+\cdots + (u_{w-1}+1)+u_w-1$. Then $\rho^{(0)}_u(B)$
allows a wrap-around on level $(j-1)$ at $\tilde{w}$.
On the other hand, if $\rho^{(0)}_u(B)$ allows a wrap-around on level $(j-1)$ at $v$, then 
(\ref {eq:wraparoundstep2}) shows that $v$ must be 
of the form $v=(u_0+1)+\cdots + (u_{\tilde{v}-1}+1)+u_{\tilde{v}}-1$ for some $\tilde{v}\in
\{0,\ldots, n_j-1\}$. Hence this implies that $u_t=u_{\tilde{v}-t}$ for all $t$, and thus
$\underline{z}_t^{(j)} = \underline{z}_{\tilde{v}-t}^{(j)}$ for all 
$t$, where the indices are taken modulo $n_j$. Therefore, the uniqueness of the wrap-around
on level $j$ leads to the uniqueness of the wrap-around on level $(j-1)$.
Using induction, this completes the proof of Proposition \ref{prop:bandstype0}.
\end{proof}


\section{Appendix: The representation theory of  $\Lambda_0$}
\label{s:prelimstring}
\setcounter{equation}{0}

As in \S \ref{s:particular}, let $k$ be an algebraically closed field of arbitrary characteristic and let 
$\Lambda_0$ be the basic $k$-algebra $\Lambda_0=kQ/I$ where $Q$ and $I$ are as in Figure 
\ref{fig:algebra}. In this appendix, we describe the representation theory of $\Lambda_0$.
Since $\Lambda_0$ is a special biserial algebra, we can use the results from \cite{buri,krau}.
It follows from \cite{buri} that all indecomposable non-projective 
$\Lambda_0$-modules are either  string or band modules.

\subsection{String and band modules for $\Lambda_0$}
\label{ss:stringband}

For each arrow $\alpha,\lambda,\xi,\delta,\rho,\beta$ in $Q$, we define a formal inverse
$\alpha^{-1},\lambda^{-1},\xi^{-1},\delta^{-1},\rho^{-1},\beta^{-1}$, respectively, with starting vertices 
$s(\alpha^{-1})=0=s(\lambda^{-1})$, $s(\rho^{-1})=1=s(\beta^{-1})$ and $s(\xi^{-1})=2=s(\delta^{-1})$
and end vertices $e(\alpha^{-1})=0=e(\beta^{-1})$, $e(\rho^{-1})=1=e(\delta^{-1})$ and 
$e(\xi^{-1})=2=e(\lambda^{-1})$.
A word $w$ is a sequence $w_1\cdots w_n$, where $w_i$ is either an arrow or a formal inverse such 
that $s(w_i)=e(w_{i+1})$ for $1\leq i \leq n-1$. Define $s(w)=s(w_n)$, $e(w)=e(w_1)$ and 
$w^{-1}=w_n^{-1}\cdots w_1^{-1}$. There are also empty words $1_0$, $1_1$ and $1_2$ of length 
$0$ with $e(1_u)=u=s(1_u)$ and $(1_u)^{-1}=1_u$ for $u\in\{0,1,2\}$. 
Denote the set of all words by $\mathcal{W}$, and the set of all 
non-empty words $w$ with $e(w)=s(w)$ by ${\mathcal{W}}_r$. 
Let $J=\{\alpha^2,\rho^2,\xi^2,\alpha\lambda,\lambda\xi,
\xi\delta,\delta\rho,\rho\beta,\beta\alpha,
\lambda\delta\beta,\beta\lambda\delta,\delta\beta\lambda\}$.

\begin{dfn}
\label{def:strings}
Let $\sim_s$ be the equivalence relation on $\mathcal{W}$ with $w\sim_s w'$ if and only if $w=w'$ or 
$w^{-1}=w'$. Then \emph{strings} are representatives $w\in{\mathcal{W}}$ of the equivalence classes under 
$\sim_s$ with the following property: Either $w=1_u$ for $u\in\{0,1,2\}$, or $w=w_1\cdots w_n$ where 
$w_i \neq w_{i+1}^{-1}$ for $1\leq i\leq n-1$ and no subword of $w$ or its formal inverse belongs to 
$J$.

Let $C=w_1\cdots w_n$ be a string of length $n$. Then there exists an
indecomposable $\Lambda_0$-module $M(C)$,
called the \emph{string module} corresponding to the string $C$, which can be described as follows.
There is an ordered $k$-basis $\{z_0,z_1,\ldots, z_n\}$ of $M(C)$ such that the action of 
$\Lambda_0$ on $M(C)$ 
is given by the following representation $\varphi_C:\Lambda_0\to\mathrm{Mat}(n+1,k)$. 
Let $v(i)=e(w_{i+1})$ for $0\leq i\leq n-1$ and $v(n)=s(w_n)$. Then for each vertex $u\in\{0,1,2\}$ and for 
each arrow $\zeta\in\{\alpha,\lambda,\xi,\delta,\rho,\beta\}$ in $Q$
$$\varphi_C(u)(z_i) = \left\{ \begin{array}{c@{\quad,\quad}l}
z_i & \mbox{if $v(i)=u$}\\ 0 & \mbox{else} \end{array} \right\}\; \mbox{ and } \;
\varphi_C(\zeta)(z_i) = \left\{ \begin{array}{c@{\quad,\quad}l}
z_{i-1} & \mbox{if $w_i=\zeta$}\\ z_{i+1} & \mbox{if $w_{i+1}=\zeta^{-1}$}\\
0 & \mbox{else}
\end{array} \right\} .$$
We  call $\varphi_C$ the \emph{canonical representation} and $\{z_0,z_1,\ldots,z_n\}$ a 
\emph{canonical $k$-basis} for $M(C)$ relative to the representative $C$. Note that 
$M(C)\cong M(C^{-1})$. 

The string modules for the empty words are isomorphic to the simple $\Lambda_0$-modules,
namely $M(1_0)\cong S_0$, $M(1_1)\cong S_1$ and $M(1_2)\cong S_2$.
\end{dfn}

\begin{dfn}
\label{def:bands}
Let $w=w_1\cdots w_n\in {\mathcal{W}}_r$. Then, for $0\leq i\leq n-1$, the \emph{$i$-th rotation} of $w$ is 
defined to be the word $\rho_i(w)=w_{i+1}\cdots w_n w_1 \cdots w_i$. Let $\sim_r$ be the 
equivalence relation on ${\mathcal{W}}_r$ such that
$w\sim_r w'$ if and only if $w=\rho_i(w')$ for some $i$ or $w^{-1}=\rho_j(w')$ for some $j$. 
Then \emph{bands} are representatives $w\in {\mathcal{W}}_r$ of the equivalence classes under 
$\sim_r$ with the following property: 
$w=w_1\cdots w_n$, $n\ge 1$, with $w_i\neq w_{i+1}^{-1}$ and $w_n\neq w_1^{-1}$, such that 
$w$ is not a power of a smaller word, and, for all positive integers $m$, no subword of $w^m$ 
or its formal inverse belongs to $J$.

Let $B=w_1\cdots w_n$ be a band of length $n$. Then for each integer $m>0$ and each 
$\mu\in k^*$ there exists an indecomposable $\Lambda_0$-module $M(B,\mu,m)$ which is
called the \emph{band module} corresponding to the band $B$, $\mu$ and $m$, which can be
described as follows. There is an ordered $k$-basis 
$$\{z_{0,1},z_{0,2},\ldots, z_{0,m},z_{1,1},\ldots,z_{1,m},\ldots, z_{n-1,1},\ldots,z_{n-1,m}\}$$
of $M(B,\mu,m)$ such that the action of $\Lambda_0$ on $M(B,\mu,m)$ 
is given by the following representation $\varphi_{B,\mu,m}:\Lambda_0\to\mathrm{Mat}(n\cdot m,k)$. 
Let $v(i)=e(w_{i+1})$ for $0\leq i\leq n-1$. Then for each vertex $u\in\{0,1,2\}$ and for 
each arrow $\zeta\in\{\alpha,\lambda,\xi,\delta,\rho,\beta\}$ in $Q$, we have
\begin{eqnarray*}
\varphi_{B,\mu,m}(u)(z_{i,j}) &=& \left\{ \begin{array}{c@{\quad,\quad}l}
z_{i,j} & \mbox{if $v(i)=u$}\\ 0 & \mbox{else} \end{array} \right\}\; \mbox{ and } \;\\[1ex]
\varphi_{B,\mu,m}(\zeta)(z_{i,j})& =& \left\{ \begin{array}{c@{\quad,\quad}l}
\mu\, z_{0,j}+z_{0,j+1} & \mbox{if $w_i=\zeta$ and $i= 1$}\\
z_{i-1,j} & \mbox{if $w_i=\zeta$ and $i\neq 1$}\\ 
\mu^{-1} z_{1,j}+z_{1,j+1}&  \mbox{if $w_{i+1}=\zeta^{-1}$ and $i=0$}\\
z_{i+1,j} & \mbox{if $w_{i+1}=\zeta^{-1}$ and $i\neq 0$}\\
0 & \mbox{else}
\end{array} \right\} 
\end{eqnarray*}
for all $0\le i\le n-1$, $1\le j \le m$,
where $z_{0,m+1}=0=z_{1,m+1}$ and $z_{n,j}=z_{0,j}$ for all $j$.
We  call $\varphi_{B,\mu,m}$ the \emph{canonical representation} and 
$\{z_{0,1},z_{0,2},\ldots, z_{0,m},z_{1,1},\ldots,z_{1,m},\ldots, z_{n-1,1},\ldots,z_{n-1,m}\}$ 
a \emph{canonical $k$-basis} for $M(B,\mu,m)$ relative to the representative $B$.
Note that we have for all $0\le i,j\le n-1$,
$$M(B,\mu,m)\cong M(\rho_i(B),\mu,m)\cong M(\rho_j(B)^{-1},\mu^{-1},m).$$ 
\end{dfn}

\subsection{The stable Auslander-Reiten quiver of $\Lambda_0$}
\label{ss:stableARquiver}

Each component of the stable Auslander-Reiten quiver of $\Lambda_0$ consists either entirely of string 
modules or entirely of band modules. The band modules all lie in $1$-tubes. The components
consisting of string modules are two $3$-tubes and infinitely many non-periodic 
components of type $\mathbb{Z}A_\infty^\infty$. 

The irreducible morphisms between string modules can be described using hooks and cohooks.  
For our algebra $\Lambda_0$, these are defined as follows. Let $\mathcal{M}$ be the set of 
maximal directed strings, i.e. $\mathcal{M}=\{\alpha,\rho,\xi,\lambda\delta,\delta\beta,
\beta\lambda\}$.

\begin{dfn}
\label{def:arcomps}
Let $S$ be a string.
We say that $S$ \emph{starts on a peak} (resp. \emph{starts in a deep}) if $S=S'C$ (resp. $S=S'C^{-1}$)
for some string $C$ in $\mathcal{M}$. Dually, we say that $S$ \emph{ends on a peak} (resp. \emph{ends in a deep})
if $S=D^{-1}S''$ (resp. $S=DS''$) for some string $D$ in $\mathcal{M}$.

If $S$ does not start on a peak (resp. does not start in a deep), there is a unique arrow $\zeta$ and a 
unique $M\in\mathcal{M}$ such that $S_h=S\zeta M^{-1}$ (resp. $S_c=S\zeta^{-1}M$) is a string. 
We say $S_h$ (resp. $S_c$) is obtained from $S$ by adding a \emph{hook} (resp. a \emph{cohook}) on 
the right side.

Dually, if $S$ does not end on a peak (resp. does not end in a deep), there is a unique arrow $\xi$ 
and a unique $N\in\mathcal{M}$ such that ${}_hS=N\xi^{-1}S$ (resp. ${}_cS=N^{-1}\xi S$)
is a string.  We say ${}_hS$ (resp. ${}_cS$) is obtained from $S$ by adding a \emph{hook} (resp. a \emph{cohook}) 
on the left side.
\end{dfn}

All irreducible morphisms between string modules are either canonical injections
$M(S)\to M(S_h)$, $M(S)\to M({}_hS)$,
or canonical projections
$M(S_c)\to M(S)$, $M({}_cS)\to M(S)$.

In particular, since none of the projective $\Lambda_0$-modules is uniserial, we get the following result.
Suppose $M(S)$ is a string module of minimal length such that $M(S)$ belongs to 
a component of the stable Auslander-Reiten
quiver of $\Lambda_0$ of type $\mathbb{Z}A_\infty^\infty$. 
Then near $M(S)$ the stable Auslander-Reiten component looks as in Figure \ref{fig:arbcomp}.
\begin{figure}[ht] \hrule \caption{\label{fig:arbcomp} The stable Auslander-Reiten component 
near $M(S)$.}
$$\xymatrix @-1.2pc{
&&&&&&\\
&M({}_{cc}S)\ar[rd]\ar@{.}[lu]\ar@{.}[ld]\ar@{.}[ru]&&
M({}_cS_h)\ar[rd]\ar@{.}[ru]\ar@{.}[lu]
&&M(S_{hh})\ar@{.}[ru]\ar@{.}[rd]\ar@{.}[lu]&\\
&&M({}_{c}S)\ar[rd]\ar[ru]&&M(S_{h})\ar[rd]\ar[ru]&&\\
&M({}_cS_c)\ar[ru]\ar[rd]\ar@{.}[lu]\ar@{.}[ld]&&M(S)\ar[ru]\ar[rd]&&M({}_hS_h)\ar@{.}[ru]\ar@{.}[rd]&\\
&&M(S_{c})\ar[ru]\ar[rd]\ar@{.}[ld]&&M({}_{h}S)\ar[ru]\ar[rd]&&\\
&M(S_{cc})\ar[ru]\ar@{.}[ld]\ar@{.}[lu]\ar@{.}[rd]&&
M({}_hS_c)\ar[ru]\ar@{.}[rd]\ar@{.}[ld]
&&M({}_{hh}S)\ar@{.}[rd]\ar@{.}[ru]\ar@{.}[ld]&\\
&&&&&&\\&&&&&&
}$$
\hrule
\end{figure}
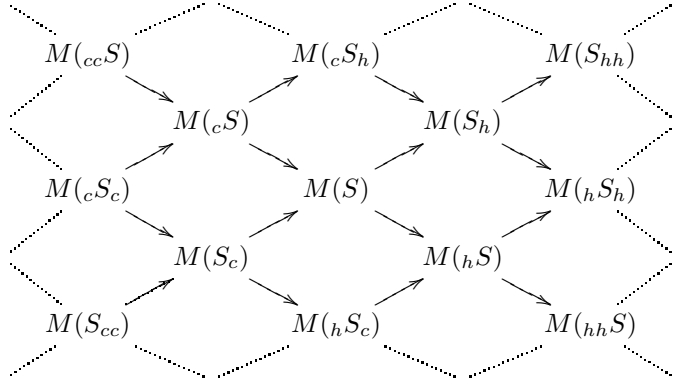

\subsection{Homomorphisms between string and band modules for $\Lambda_0$}
\label{ss:homs}

In \cite{krau}, all homomorphisms between string and band modules have been determined.
The following remark describes the homomorphisms between string modules using the
canonical bases defined in Definition \ref{def:strings}.

\begin{rem}
\label{rem:stringhoms}
Let $M(S)$ (resp. $M(T)$) be a string module for $\Lambda_0$ with a canonical 
$k$-basis $\{x_u\}_{u=0}^m$ 
(resp. $\{y_v\}_{v=0}^n$) relative to the representative $S$ (resp. $T$).
Suppose $C$ is a string such that
\begin{enumerate}
\item[(i)] $S\sim_s S'CS''$ with ($S'$ of length $0$ or $S'=\hat{S}'\xi_1$) and ($S''$ of length $0$ or 
$S''=\xi_2^{-1} \hat{S}''$), where $S',\hat{S}',S'',\hat{S}''$ are strings and $\xi_1,\xi_2$ are arrows in $Q$; 
and
\item[(ii)] $T\sim_sT'CT''$ with ($T'$ of length $0$ or $T'=\hat{T}'\zeta_1^{-1} $) and ($T''$ of length $0$ or 
$T''=\zeta_2 \hat{T}''$), where $T',\hat{T}',T'',\hat{T}''$ are strings and $\zeta_1,\zeta_2$ are arrows in $Q$.
\end{enumerate} 
Then by \cite{krau} there exists a non-zero $\Lambda_0$-module homomorphism 
$\sigma_C:M(S)\to M(T)$ which factors through $M(C)$ and which sends 
each element of $\{x_u\}_{u=0}^m$ either to zero or to an element of $\{y_v\}_{v=0}^n$,
according to the relative position of $C$ in $S$ and $T$, respectively.
If e.g. $S=s_1s_2\cdots s_m$, $T=t_1t_2\cdots t_n$, and $C=s_{i+1}s_{i+2}\cdots s_{i+\ell} = t_{j+\ell}^{-1}t_{j+\ell-1}^{-1}\cdots t_{j+1}^{-1}$,
then 
$$\sigma_C(x_{i+t})=y_{j+\ell-t} \mbox{ for } 0\le t\le \ell, \mbox{ and } \sigma_C(x_u)=0\mbox{ for all other $u$.}$$
We call $\sigma_C$ a \emph{canonical homomorphism} from $M(S)$ to $M(T)$.
Note that there may be several choices for $S',S''$ (resp. $T',T''$) in (i) (resp. (ii)). In other words, there 
may be several $k$-linearly independent canonical homomorphisms factoring through $M(C)$. 
By \cite{krau}, every $\Lambda_0$-module homomorphism $\sigma:M(S)\to M(T)$ can be written 
as a unique $k$-linear combination of canonical
homomorphisms which factor through string modules corresponding to strings $C$ satisfying 
(i) and (ii). 
\end{rem}

It follows from \cite{krau} that if $B$ is a band, $\mu\in k^*$ and $n\ge 2$ is an  
integer, then $\underline{\mathrm{End}}_{\Lambda_0}(M(B,\mu,n))$ has $k$-dimension
at least $2$. The following remark describes the homomorphisms between band modules 
of the form $M(B,\mu,1)$
using the canonical bases defined in Definition \ref{def:bands}.

\begin{rem}
\label{rem:bandendohelp}
Let  $B,\tilde{B}$ be bands  and let $\mu,\tilde{\mu}\in k^*$. Let $M_{B,\mu}=M(B,\mu,1)$ 
(resp. $M_{\tilde{B},\tilde{\mu}}=M(\tilde{B},\tilde{\mu},1)$) with a
canonical $k$-basis $\{z_{u,1}\}_{0\le u\le m-1}$ (resp. $\{\tilde{z}_{v,1}\}_{0\le v\le n-1}$)
relative to the representative $B$ (resp. $\tilde{B}$).
Suppose $S$ is a string such that 
\begin{enumerate}
\item[(i)] $B\sim_rST_1$ with $T_1=\xi_1^{-1} T'_1\xi_2$, where $T_1,T'_1$ are strings 
and $\xi_1,\xi_2$ are arrows in $Q$; and 
\item[(ii)] $\tilde{B}\sim_r ST_2$ with $T_2=\zeta_1 T'_2\zeta_2^{-1}$, where $T_2,T'_2$ are 
strings and $\zeta_1,\zeta_2$ are arrows in $Q$.
\end{enumerate} 
Then by \cite{krau} there exists a non-zero $\Lambda_0$-module homomorphism 
$\tau_S:M_{B,\mu}\to M_{\tilde{B},\tilde{\mu}}$ which factors 
through $M(S)$ and which sends each element of $\{z_{u,1}\}_{0\le u\le m-1}$ either to zero
or to an element of $\{\tilde{z}_{v,1}\}_{0\le v\le n-1}$, according to the relative position of $S$ in $B$
and $\tilde{B}$, respectively.
Suppose e.g. $B=w_1\cdots w_m$, $\tilde{B}=\tilde{w}_1\cdots \tilde{w}_n$ 
and $S=w_{i+1}w_{i+2}\cdots w_{i+\ell}=
\tilde{w}_{j+\ell}^{-1}\tilde{w}_{j+\ell-1}^{-1}\cdots \tilde{w}_{j+1}^{-1}$, 
where $w_i=\xi_2$, $w_{i+\ell+1}=\xi_1^{-1}$, $\tilde{w}_j^{-1}=\zeta_1$, $\tilde{w}_{j+\ell+1}^{-1}=\zeta_2^{-1}$.
Then
$$\tau_S(z_{i+t,1})=\tilde{z}_{j+\ell-t,1} \mbox{ for } 0\le t\le \ell, \mbox{ and } \tau_S(z_{u,1})=0\mbox{ for all other $u$.}$$
We call $\tau_C$ a \emph{canonical homomorphism} from $M_{B,\mu}$ to $M_{\tilde{B},\tilde{\mu}}$
\emph{of string type $S$}. Note 
that there may be several choices for $T_1$ (resp. $T_2$) in (i) (resp. (ii)). In 
other words, there may be several $k$-linearly independent canonical 
homomorphisms of string type $S$. By \cite{krau}, if 
$M_{B,\mu}$ and $M_{\tilde{B},\tilde{\mu}}$ are not isomorphic,
then 
every $\Lambda_0$-module homomorphism $\tau:M_{B,\mu}\to M_{\tilde{B},\tilde{\mu}}$ can be written 
as a unique $k$-linear combination of canonical homomorphisms of string type $S$ 
for suitable choices of strings $S$ satisfying (i) and (ii). If $B=\tilde{B}$ and $\mu=\tilde{\mu}$, then every
$\Lambda_0$-module endomorphism of $M_{B,\mu}$ can be
written as a unique $k$-linear combination of the identity homomorphism and of 
canonical endomorphisms of string type $S$ for suitable 
choices of strings $S$ satisfying (i) and (ii). 
\end{rem}


\end{document}